\let\small=\footnotesize
\def\longpage#1{\newdimen\addstr  \addstr=#1\baselineskip
  \advance\topmargin-2\baselineskip \advance\textheight4\baselineskip
  \advance\topmargin-0.5\addstr     \advance\textheight\addstr
  \advance\oddsidemargin-0.5\addstr \advance\textwidth\addstr}
\def\R{\mathbb R}              \def\ra{\rightarrow} 
              \def\F{\Phi}      
\def\b{\beta}                         
          \def\:{\colon\,}        
\def\D{\partial}               
                 \def\g{\gamma} 
\def\e{\varepsilon}             
\def\sph{\mathbb S}
\def\a{\alpha}
\def\fin{\hfill{$\square$}}    
\def\d{\delta}                  
               \def\E{{\mathcal E}}
               \def\A{{\mathcal A}}
\def\V{{\mathcal V}}  
              \def\RP{\mathbb R\mathrm P}
\def\tilde{\widetilde}         \def\hat{\widehat} 
\author{Ricardo\,Uribe-Vargas \\ \footnotesize{Institut de Mathématiques de Bourgogne, UMR 5584, CNRS \& Univ. de Bourgogne and} \\
\footnotesize{Lab. Solomon Lefschetz UMI 2001, CNRS \& Universidad Nacional Autonoma de M\'exico} \\ 
\footnotesize{r.uribe-vargas@u-bourgogne.fr}}
\date\empty                     
\title{\vspace{-2.5cm} {\footnotesize  To appear in \textsc{Journal of Dynamical and Control Systems}
2024.  Its old title was} 

{\footnotesize \textit{Surface Evolution Implicit Differential Equations and Pairs of Legendrian Fibrations}} \\ \ \\
 Evolving Surfaces and Evolving Implicit Differential 
Equations Via Contact Geometry and Singularities}
\begin{document}

\captionsetup[figure]{labelfont={bf},labelformat={default},labelsep=period,name={\small Fig.}}

\numberwithin{equation}{section}                
\theoremstyle{plain}
\newtheorem{theorem}{\bf Theorem}
\numberwithin{theorem}{section}
\newtheorem*{Theorem}{\bf Theorem}
\newtheorem{teorema}{\bf Theorem}
\def\theteorema{\Roman{teorema}}
\newtheorem*{Euler}{\bf Euler's Criterion}
\newtheorem{lemma}{\bf Lemma}
\numberwithin{lemma}{section}
\newtheorem*{mlemma}{\bf Main Lemma}
\newtheorem*{lemma0}{\bf Lemma 0}
\newtheorem*{lemmaa}{\bf Lemma \small A}
\newtheorem*{lemmab}{\bf Lemma \small B}
\newtheorem*{lemmac}{\bf Lemma \small C}
\newtheorem*{lemmad}{\bf Lemma \small D}
\newtheorem*{lemmae}{\bf Lemma \small E}
\newtheorem*{lemmaf}{\bf Lemma \small F}
\newtheorem*{lemmag}{\bf Lemma \small G}
\newtheorem*{fitheorem}{\bf Inflection-Contour Theorem}
\newtheorem*{nc-theorem}{\bf No Cusp Theorem}

\newtheorem{proposition}{\bf Proposition}
\numberwithin{proposition}{section}
\newtheorem*{proposition*}{\bf Proposition}
\newtheorem{corollary}{\bf Corollary}
\numberwithin{corollary}{section}
\newtheorem*{corollary*}{\bf Corollary}
\newtheorem*{corollary3}{\bf Corollary {\rm (of Theorem \ref{General})}}
\theoremstyle{definition}
\newtheorem{definition}{\bf Definition}[section]
\newtheorem*{definition*}{\bf Definition}
\newtheorem*{conjecture}{\bf Conjecture}

\newtheorem{example}{\bf Example}
\numberwithin{example}{section}
\newtheorem*{example*}{\bf Example}
\newtheorem{Example}{\bf Example}[section]
\newtheorem*{exampleE}{\bf Example (The Euler Group)}
\theoremstyle{remark}
\newtheorem{note}{\sc Note}
\newtheorem*{remark*}{\sc Remark}
\newtheorem{remark}{\sc Remark}
\newtheorem*{Aremark}{\bf Affine Remark}
\newtheorem*{iremark}{\bf Inflection-Remark}
\newtheorem*{cremark}{\bf Natural Orientation of the Parabolic Curve}
\newtheorem*{aremark}{\bf Remark on Asymptotic Curves}
\newtheorem*{inf-leg-remark}{\bf Remark on Inflections and Legendrian Fibrations}
\newtheorem*{wremark}{\bf Whitney Pleat Remark}

\maketitle

\noindent
{\footnotesize \bf Abstract. \rm 
We present the list of unavoidable local phenomena (transitions) occurring on 
the configuration of the flecnodal and parabolic curves of evolving smooth surfaces 
in $\R^3$ (or $\RP^3$). We also present the list of transitions occurring on the curve of 
inflections of the solutions of evolving implicit differential equations (IDE). 
Our results are based on the properties of the contours of surfaces (in a contact $3$-space)
for projections all whose fibres are Legendrian. 
\smallskip

\noindent
{\em Keywords}: Surface, flecnodal curve, 
contact geometry, implicit differential equations. 
\smallskip

\noindent
MSC2010: 14B05, 32S25, 58K35, 58K60, 53A20, 53A15, 53A05, 53D99, 70G45.
}

\section*{Introduction and Main Results}
We investigate two subjects: the geometry of solutions of implicit differential equations (IDE) in one variable 
and the local transitions of some robust properties of surfaces in $3$-space. 
The main result on IDEs is a 
classification of generic $1$-parameter local transitions of the discriminant (see \S\ref{subsect:bifurcations of BIDE}) 
and the \textit{curve of inflections} (formed by the inflection points of the solution curves). 
The main result on smooth surfaces is a classification of the local transitions of the flecnodal 
and parabolic curves (defined in \S\ref{subsect:bifurcations of EvSurf}) occurring in generic $1$-parameter 
families of smooth surfaces in $\R^3$ (or $\RP^3$). 

Our study of surfaces in $3$-space is done in terms of IDEs, and in both subjects, the investigation is done in the setting of 
contact geometry and the Legendre singularity theory initiated by V.\,I.\,Arnold. 

We assume that all manifolds and maps are \textit{smooth}, which means ``continuously differentiable the 
necessary number of times'', for example $C^\infty$.

\subsection{Transitions in Evolving Surfaces}\label{subsect:bifurcations of EvSurf}
Along the paper, a \textit{smooth evolving surface} $S_\e$ means a $1$-parameter family of smooth surfaces 
$S\times\R\longrightarrow\RP^3$ (or $\R^3$). 

A generic smooth surface in $\R^3$ can have an open {\em hyperbolic domain} $H$ at which 
the Gaussian curvature $K$ is negative,
an open {\em elliptic domain} $E$ where 
$K$ is positive
and a {\em parabolic curve} $P$ where  
$K=0$. 
The {\em flecnodal curve} $F$ is formed by the \textit{inflections} of the asymptotic curves
(points at which the first two derivatives of the curve are collinear). 
If the surface depends on one parameter (say, the time), the configuration
formed by the flecnodal and parabolic curves may change. 

In the middle of the 1980's, while working in computer vision problems, 
D.\,Mumford communicated to V.\,Arnold the following
\smallskip

\noindent
\textit{\textbf{Mumford's problem}}:
{\em Find all local transitions of the flecnodal and parabolic curves occurring in generic $1$-parameter 
families of surfaces.}  
\smallskip

It turns out that there are four types of points of generic surfaces 
which are involved in such transitions. We have to mention them:

We distinguish two branches of the flecnodal curve, called \textit{left} and \textit{right}, 
according to the orientation of the framing (see $\S\S$\,\ref{subsect:relevant-properties} for the definition).

A \textit{hyperbonode} is a point of transverse intersection of the left and right branches of the 
flecnodal curve, and an \textit{ellipnode} is a real intersection point of complex conjugate flecnodal curves. 
A {\em biflecnode}, noted $b$ in Fig.\,\ref{points}, is an isolated point of the flecnodal curve at which an asymptotic curve
has a \textit{bi-inflection} (the first $3$ derivatives are collinear - see \S\,\ref{Tangential-Classification}, 
p.\,\pageref{inflections_bi-inflections}).
\smallskip


A {\em godron} is a parabolic point at which the (unique) asymptotic direction is tangent
to the parabolic curve. 
The following theorems have already been well known:
\medskip

\noindent
{\bf Theorem} \cite{Salmon, Kortewegpp, Platonova, Landis}. {\em At a godron of a generic 
smooth surface the flecnodal curve is 
tangent to the parabolic curve.} 
\medskip

\noindent
{\bf Theorem 0} \cite{Uribegodron}. 
{\em Godrons locally separate the left and right branches of the flecnodal curve}. 
\medskip

In Fig.\,\ref{14perestroikas}, we represent the elliptic domain in white, the hyperbolic domain in grey 
(red for online version), the right branch $F_r$ of the flecnodal curve in black, the left 
branch $F_\ell$ in white and godrons as white points on $P$.
\smallskip

\begin{figure}[ht]
\centerline{\psfig{figure=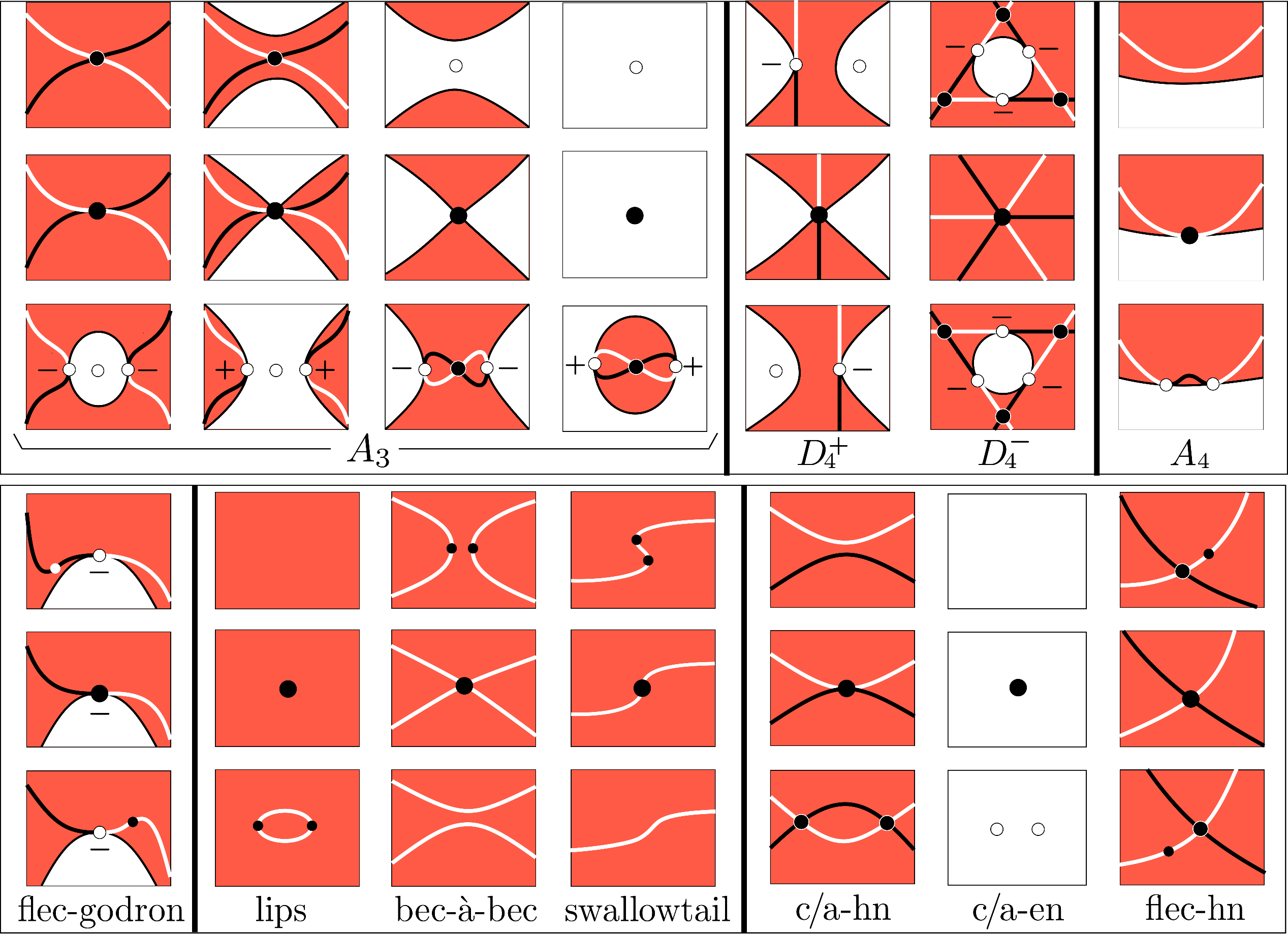,height=8.3cm}}
\caption{\small Transitions of the tangential singularities in generic evolving smooth surfaces.
The sign $+$ or $-$ is the {\em index} (defined in p.\,\pageref{index-page}) of the godrons taking part in the bifurcation.}
\label{14perestroikas}
\end{figure}

\noindent
{\bf List 1}. Our first main result solves Mumford's problem. 
It is the list, depicted in Fig.\,\ref{14perestroikas}, of local transitions of the flecnodal and parabolic 
curves (and the above four special points) that can occur on a generic evolving smooth surface.  


For example, the 4th $A_3$ transition in Fig.\,\ref{14perestroikas} means that (see Fig.\,\ref{fig:eight-figure}): 
\medskip 

\noindent
{\em The birth of a hyperbolic disc generates the birth of a flecnodal curve inscribed
in the bounding parabolic curve and having the shape of the figure eight. That transition 
occurs at an ellipnode which becomes hyperbonode after the birth.} 
\begin{figure}[ht]
\centerline{\psfig{figure=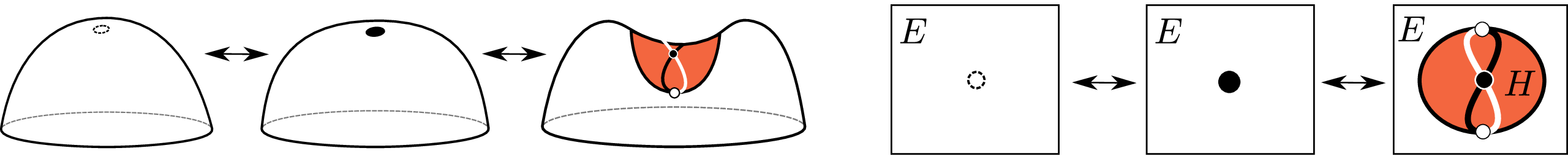,height=1.28cm}}
\caption{\small The unavoidability of the ``eight'' of $F$ at the birth of a hyperbolic disc.}
\label{fig:eight-figure}
\end{figure}

In \S\,\ref{section:bifurcation-theorems}, 
Theorem\,\ref{th:bigodron&flecgodron} corresponds to $A_4$ and flec-godron transitions; 
Theorem\,\ref{A_3-bifurcations} to $A_3$ transitions; Theorem\,\ref{theorem:lips_bec-a-bec} to lips, bec-\`a-bec and swallowtail transitions, 
Theorem\,\ref{th:ellipnodes-hyperbonodes} to creation/annihilation of hyperbonodes (or ellipnodes) and flec-hyperbonode transitions; 
Theorem\,\ref{theorem:flat-umbilics} to $D_4$ transitions. 
\medskip

\noindent
\textbf{Fact}. 
Fig.\,\ref{14perestroikas} implies that \textit{to have an $A_3$ transition of the flecnodal curve, it is necessary to have either an ellipnode 
or a hyperbonode at which the transition takes place\,: that ellipnode is replaced 
by a hyperbonode $($or the opposite$)$.}  \\
{\rm (In Fig.\,\ref{fig:eight-figure}, there was an ellipnode before the ``birth of the 8''.)} 
\smallskip

In some transitions of Fig.\,\ref{14perestroikas}, only the left flecnodal curve $F_\ell$ (in white) is 
represented; however, the right flecnodal curve $F_r$ (in black) may have the same transitions. 
The branches $F_\ell$ and $F_r$ play symmetric roles and are interchangeable in all transitions of Fig.\,\ref{14perestroikas}.

\subsubsection{Flecnodal curve on propagating wave fronts}\label{wavefronts}

%
The {\em tangential map} of a smooth surface $S$, $\tau_S:S\ra (\RP^3)^\vee$,
associates to each point of $S$ its tangent plane at that point.
The {\em dual surface $S^\vee$ of $S$} is the image of $\tau_S$. 
{\em Under $\tau_S$ the parabolic curve of $S$ corresponds to the cuspidal edge of $S^\vee$ 
and a godron to a swallowtail point} (c.f. \cite{Salmon}). 

The natural approach to the singularities 
of the tangential map is via Arnold's theory of Legendre 
singularities \cite{avg}. The image of a Legendre map is called 
its {\em front} (see \S\,\ref{geometry-IDE}). The tangential map of $S$ is a Legendre 
map; so if $S$ is in general position \textit{the only local singularities 
of its dual $S^\vee$ are those of generic fronts}\,:
\textit{cuspidal edges and swallowtails}.
Thus the transitions of the parabolic curve may be obtained from the list of 
transitions of wave fronts in $3$-space (appeared first in 
\cite{Arnoldcancon}, later in \cite{Arnoldwfeeml,avg}).

Since the tangent planes to $S$ along the flecnodal curve form the flecnodal curve of its 
dual $S^\vee$, the projective dual to a hyperbonode (ellipnode) 
is a hyperbonode (resp. ellipnode) \cite{Uribetesis}. So from the $A_3$-transitions of 
Fig.\,\ref{14perestroikas} (Th.\,\ref{A_3-bifurcations}) we get the transitions of the flecnodal 
curve on fronts (see Fig.\,\ref{A3-perestroikas}):
\smallskip

\noindent
\begin{proposition}
The $A_3$ transitions occurring in a generic propagating wave front $S_\e$ 
take place at an ellipnode or at a hyperbonode which is replaced, respectively, by a hyperbonode or 
an ellipnode {\rm (Fig.\,\ref{A3-perestroikas})}.  
In front-time $3$-space $\{S_\e\times\{\e\}\}$, the flecnodal curves of the fronts 
form a folded Whitney umbrella whose cuspidal edge is swiped by two swallowtail points, which collapse 
and disappear at the folded pinch-point {\rm (Fig.\,\ref{folded-umbrellas})}. 
\end{proposition} 

\begin{figure}[ht]
\centerline{\psfig{figure=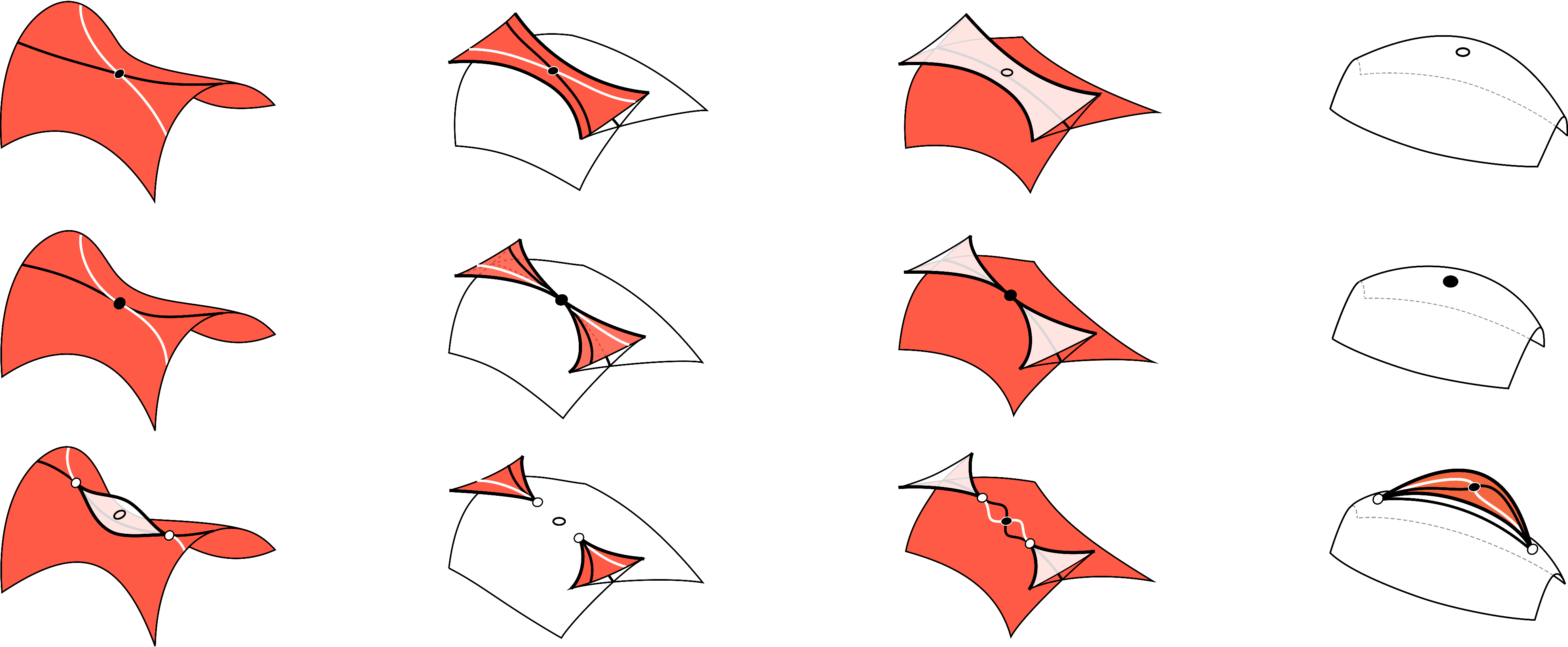,height=5cm}}
\caption{\small The four $A_3$-perestroikas of generic propagating wave fronts.}
\label{A3-perestroikas}
\end{figure}

\smallskip


\begin{figure}[ht]
\centerline{\psfig{figure=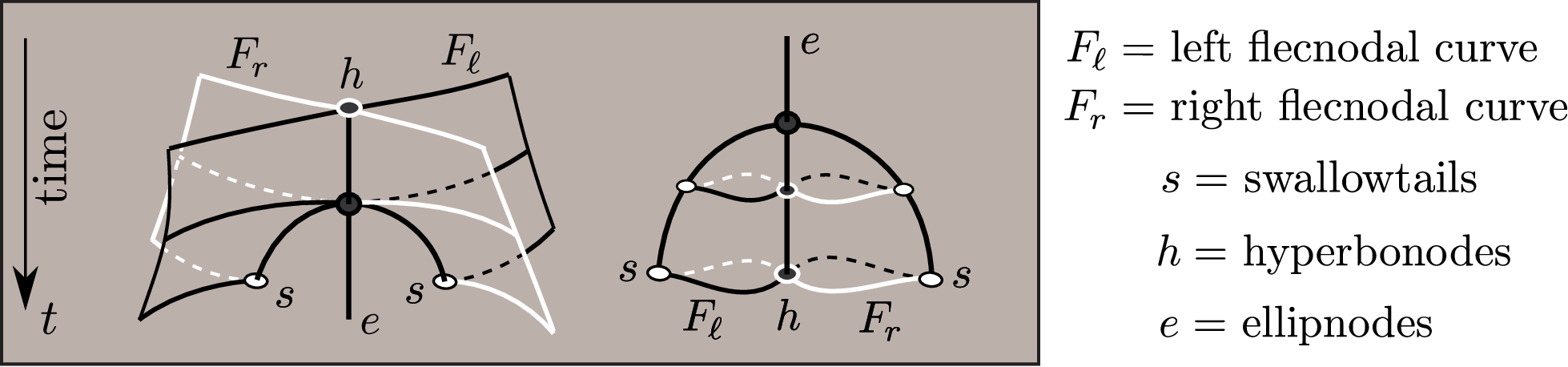,height=2.5cm}}
\caption{\small Folded umbrellas formed by the flecnodal curve of propagating wave fronts.}
\label{folded-umbrellas}
\end{figure}

\subsection{Transitions in Evolving Implicit Differential Equations}\label{subsect:bifurcations of BIDE}
Given a smooth function $F:J^1(\R,\R)\ra\R$, consider the implicit differential equation (IDE) 
\begin{equation}\label{eq:general-IDE}
  F(x,y,p)=0\,, \quad \mbox{ where } p =dx/dy\,. 
\end{equation}

In the space of $1$-jets, IDE\,\eqref{eq:general-IDE} determines a surface $\V^F$. 
The standard contact structure of $J^1(\R,\R)$ endows $\V^F$ with a foliation by Legendre curves, 
called characteristic curves of $\V^F$ (see \S\,\ref{characteristics} for the definitions).  

%

The projection of $\V^F$ on $\R^2$ in the direction of the $p$-axis
\begin{equation}\label{eq:pi:V-->R^2}
  \pi_{|_{\V^F}}:\V^F\to\R^2\,, \qquad \pi(x, y, p) = (x, y)\,,
\end{equation} 
sends the foliation of $\V^F$ to the family of \textit{solution curves} of IDE\,\eqref{eq:general-IDE}. 
The \textit{discriminant curve} $D$ of IDE\,\eqref{eq:general-IDE} is the set of critical values of $\pi_{|_{\V^F}}$. 
\smallskip

%
 

\noindent
\textit{\textbf{Inflection}}. An {\em inflection} ({\em bi-inflection}) of a 
plane curve is a point at which the curve has at least $3$-point (resp. $4$-point)
contact with its tangent line. 
\smallskip

The \textit{curve of inflections} $I$ of IDE\,\eqref{eq:general-IDE}, which consists of the 
inflections of the solution curves, 
has two branches $I_\ell$ and $I_r$, called \textit{left} and \textit{right}: 
they arise from different parts of $\V^F$ (see \S\,\ref{characteristics},
\S\ref{sect:inf-Leg_fibrations-contour} for the definitions).
A \textit{hyperbolic node} is a point of transverse intersection of the left and right
curves of inflections. 
A \textit{bi-inflection} of IDE\,\eqref{eq:general-IDE} is just a bi-inflection of a solution 
curve (left or right). 

If $\V^F$ is in general position, its characteristic foliation may have isolated 
singularities (saddle/node/focus), called \textit{characteristic} points. 
The projection of such a point in the $(x,y)$-plane is called \textit{folded singularity} 
of the IDE (\cite{Davidov}). 

We restrict ourselves to \textit{binary IDE} (BIDE): such an IDE defines two (real or complex) 
directions, counting multiplicities,  at each point in the plane 
\begin{equation}\label{eq:binary-IDE}
  F(x,y,p)=a(x,y)+2b(x,y)p+c(x,y)p^2=0. 
\end{equation}

\begin{remark*}
Equation\,\eqref{eq:binary-IDE} forbids the ``vertical'' direction ($p=\infty$). 
But we can choose any given direction as the $x$-axis ($p=0$). In the 
multilocal case, we can choose the $y$-axis to be different from the second direction. 
\end{remark*}

Now we can formulate a BIDE version of Mumford's problem: 
\medskip

\noindent 
\textit{\textbf{Mumford's problem 2}}\,:
{\em find all local transitions of the curve of inflections, the discriminant 
and the special points (hyperbolic nodes, bi-inflections, folded singularities) 
occurring in generic $1$-parameter families of BIDE.}  
\medskip

\noindent
{\bf List 2}.  Our second main result solves Mumford's problem\,2. It is the list, 
depicted in Fig.\,\ref{inflections}, of local transitions of the curve of inflections, 
the discriminant  (and the above special points) that can occur on a generic evolving
BIDE. In Fig.\,\ref{inflections}, the (real) domain of definition of the BIDE is represented 
in grey (red for online version), the right branch $I_r$ in black, the left branch $I_\ell$ 
in white, and folded singularities as white points on the discriminant curve. 

\begin{figure}[ht]
\centerline{\psfig{figure=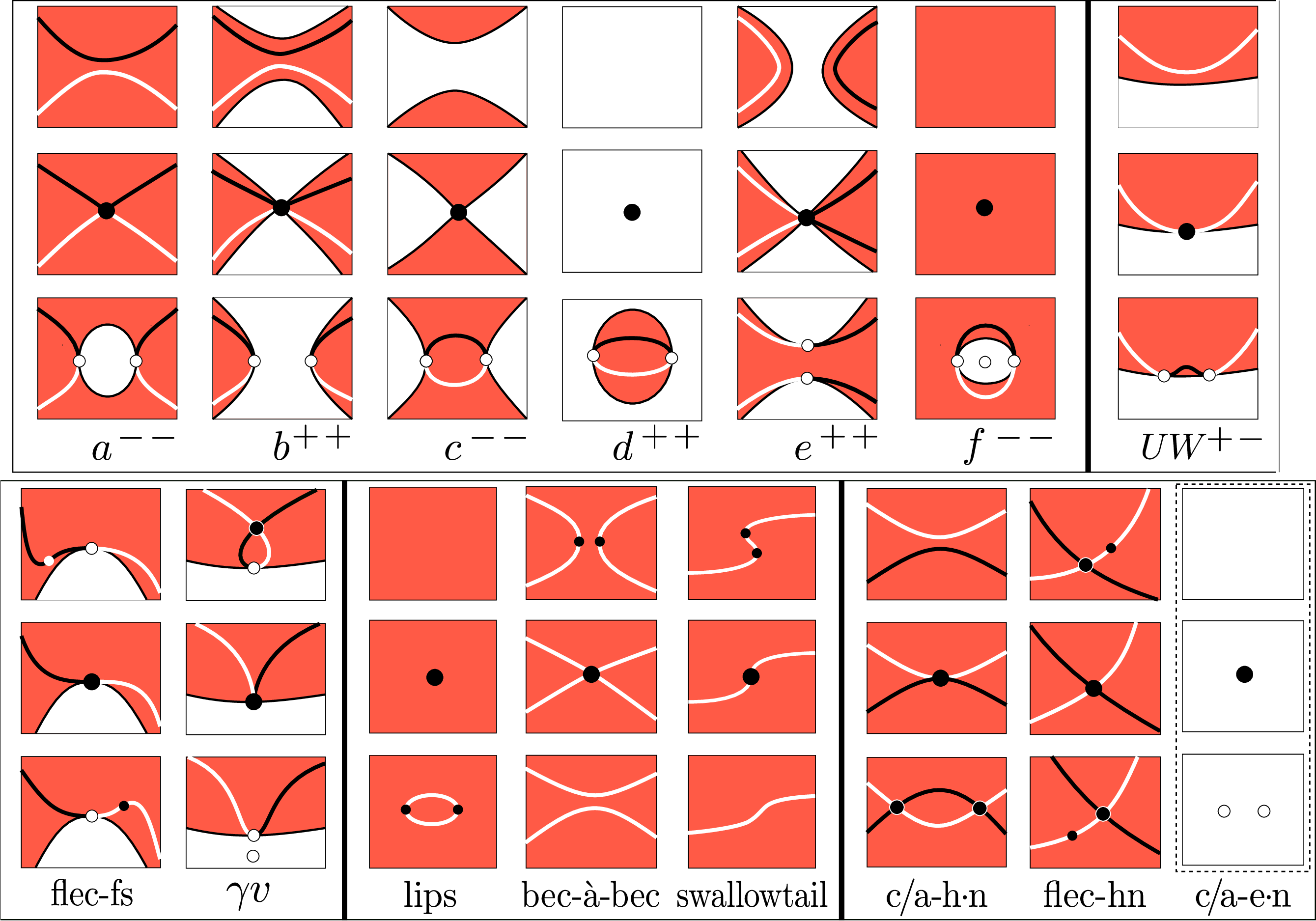,height=8.6cm}}
\caption{\small Transitions of the curve of inflections, discriminant and special
points in generic evolving BIDE. 
The signs $+$ or $-$ are the indices of the folded singularities (defined in p.\,\pageref{indices-page}).}
\label{inflections}
\end{figure}

\noindent
\textbf{\textit{\small Elliptic Nodes}}. 
In the domain where a BIDE determines a field of pairs of complex conjugate directions, 
the inflections of their solution curves form a pair of complex conjugate curves 
which intersect at real points called {\em elliptic nodes}. 
To underline that a creation/annihilation of two elliptic nodes could have no meaning 
for some evolving IDEs, we include it in a dotted box.

In \S\,\ref{section:bifurcationIDEs}, 
Theorem\,\ref{th:bifurcation-withPsmooth} corresponds to $UW$, flec-godron and $\g v$ transitions; 
Theorem\,\ref{th:6bifurcations-BIDE} to $a$, $b$, $c$, $d$, $e$, $f$ transitions; 
Theorem\,\ref{th:IDE-swallowtail-lips-bec-a-bec} to lips, bec-\`a-bec and swallowtail transitions, 
Theorem\,\ref{th:multisingularities} to creation/annihilation of hyperbolic (or elliptic) nodes and 
flec-hyperbolic node transitions.

\subsection{Equivalence of transitions and models of transitions}
A smooth $1$-parameter family of plane curves $C_\e$ determines a surface $\cup_{\e\in\R}C_\e$ in plane-time 
$3$-space, naturally foliated by its isochronal sections $C_\e$. 
\smallskip

\noindent
\textit{\textbf{\small Equivalence of Transitions}}. 
Two $1$-parameter families of curves $C_\e$, $C_\mu$ having a local transition 
at $q\in\R^2$ for $\e=\mu=0$ are said to be \textit{equivalent} if there is a 
local diffeomorphism (homeomorphism) from $\R^2\times\R$ to itself, near $(q,0)$, sending diffeomorphically 
isochronal sections to isochronal sections. 
\smallskip

In this paper, most transitions  
are equivalent to the ``isochronal'' sections $\e=\mathrm{const.}$ of the surfaces, 
or pairs of surfaces (given by equations) of Table\,\ref{tb:models-of-transisions}.

\begin{table}[h!]
\centering
\begin{tabular}{|c|c|}\hline
equation(s) & name of the transition model\\ \hline \hline
 $x^2\pm y^2-\e=0$ &  $+$ {\small Morse centre-type ; $-$ Morse saddle-type}  \\ \hline 
 $x^2\pm y^2=\,\e^2$ & {\small $+$ elliptic cone section ; $-$ hyperbolic cone section}  \\ \hline 
 $y^2=\e x^2+\a x^4$ & {\small $\a>0$ elliptic $A_3$; \, $\a<0$ hyperbolic $A_3$}  \\ \hline 
 $(y-x^2+\e)(y+x^2-\e)=0$ & {\small creation/anihilation of two crossings} \\ \hline 
 $(y+\e)(x^2\pm y^2)=0$ &  {\small $D_4^\pm$ transitions} \\ \hline
 $y=0\,;\quad x^2-y(y-\e)^2=0$ & {\small $\g v$-transition} \\ \hline 
 $y=0\,;\quad y-(x^2-\e)^2=0$  & {\small $UW$-transition} \\ \hline 
  \end{tabular}.
\caption{\small Seven surfaces whose sections $\e=\mathrm{const.}$ provide most transitions of this paper.}
\label{tb:models-of-transisions}
\end{table}

A centre or saddle transition is respectivelly called \textit{lips} or \textit{bec-\`a-bec} 
if it involves only the curve of inflections (or only the flecnodal curve). 
\smallskip

We have moreover two transition types in which the involved curves (which are tangent or have a crossing) 
undergo no structural change, but one special point (biflecnode), which moves along one of them, pass 
through their common point at the transition moment. 
Any two such transitions with tangency (or with crossing) are equivalent.

\subsection{Flecnodal Curve of Surfaces via Asymptotic IDEs}
The connection between IDEs and the flecnodal curve of a surface is as follows. 
Given a Monge form $z=f(x,y)$ of a surface $S$ in $\R^3$, the \textit{asymptotic} IDE 
\begin{equation}\label{eq:asymptote-ide}
f_{xx}(x,y)+2f_{xy}(x,y)p+f_{yy}(x,y)p^2=0\,,
\end{equation}
provides the flecnodal and parabolic curves. Namely, the projection of $S$ to the $(x,y)$-plane, 
in the $z$-direction, sends the asymptotic curves of $S$ to the solution curves of \eqref{eq:asymptote-ide}, 
sends the flecnodal curve to the curve of inflections (Prop.\,\ref{flecnodal-inflections}) and 
sends the parabolic curve to the discriminant.

The class of asymptotic IDEs is very thin in the space of all BIDEs. For this reason the local 
transitions for these two classes are discussed separately. 
They correspond respectively to List 1 and List 2 above.

\subsection{Curve of Inflections and Legendrian Fibrations}\label{flecnodal-Legendrian}

A key element, necessary to investigate the curve of inflections of IDE\,\eqref{eq:general-IDE} and its transitions, 
is the following fact, explained and proved in \S\,\ref{sect:inf-Leg_fibrations-contour} (Fig.\,\ref{fig:fold-inflection}). 
Consider the apparent contour $\hat{I}$ of the surface $\V^F\subset J^1(\R,\R)$ by the Legendrian (dual) 
projection $\pi^\vee:(x,y,p)\mapsto (X,Y)=(p, px-y)$. (I.e. $\hat{I}$ is the critical set of the map 
$\pi^\vee_{|\V^F}:\V^F\ra(\R^2)^\vee$.)  
 
\noindent
\textbf{Fact}. \textit{Projection \eqref{eq:pi:V-->R^2} sends $\hat{I}$ to the curve of inflections 
of IDE\,\eqref{eq:general-IDE}}- Fig.\ref{fig:fold-inflection}.

{\footnotesize 
\subsection{\small Some relevant references and the origin of this paper}\label{sect:history-paper}
At the end of the 19th Century, the flecnodal and parabolic curves of complex algebraic surfaces were investigated
by Cayley, Zeuthen and Salmon (\cite{Salmon}). In the same years, Korteweg investigated the transitions 
of the parabolic and conodal curves of evolving real surfaces - in thermodynamics, he studied the evolution of 
the graph of the energy (as a function of the volume and the entropy) \cite{Kortewegpp,Korteweggtp}; so Korteweg was a founder of 
catastrophe theory. These Korteweg works were forgotten (or unknown) by the scientific community. 
(In 2004 E.\,Ghys and D.\,Serre let me know the book \cite{Levelt},
which describes Korteweg's studies.) 

In the 1980's, R.\,Thom (\cite{k-Thom}) and V.\,Arnold's school (cf. \cite{Landis, Platonova}) revived the subject 
from the view point of singularity theory; while others (cf.\,D Mumford) applied it to computer vision.  
Some later contributions are, for example, \cite{Dima, Uribegodron, Ovsienko-Tabachnikov, Uribeinvariant}. 
The classification of jets of functions of \cite{Toru} (and \cite{Toru2}) is complementary to our work; 
but to apply it to geometry of surfaces, one requires additional considerations (on the higher order terms and the moduli). 

Concerning the local normal forms for IDEs, we have to mention \cite{Cibrario, Dara, Davidov-Rosales, BT95, BT97, BFT2000, Davidov-Ishi-Izu}, and 
specially \cite{Davidov} related to godrons and \cite{Arnold-Suf-Hyperbolic-eqs} related to conic singularities of the surface $\V^F$. 

Among some pictures of computer experiments by A.\,Ortiz, the ``8'' in the transition of Fig.\,\ref{fig:eight-figure}
drew V.\,Arnold's attention, who suggested to prove (or disprove) the
{\em unavoidability of the ``8''}. Arnold considered it as a fundamental fact of geometry of surfaces (Paris, 1999).
This \textit{unavoidability problem} (solved by Theorem\,\ref{A_3-bifurcations}), the preprint of Panov's paper \cite{Dima} and Mumford's problem 
were the motivations for this research.
Many results of this paper come from Ch.\,7 of my Ph.\,D. Thesis (\cite{Uribetesis}) and were exposed in the Singularity 
Theory Semester at Newton Institute (2000) 
and in a mini-course (Int. Conf. of Real and Complex Singularities Sao Carlos, 2002), where 
the preprint \textit{Surface Evolution, Implicit Differential Equations and Pairs of 
Legendrian Fibrations} was distributed. This is a completely revamped version: shorter, clearer and includes
several new results. 

}

\medskip

\noindent
{\bf Organisation}.
\textbf{Part I} (On IDE): 
In \S\,\ref{section:bifurcationIDEs},  we present our results on evolving IDE (Ths.\,\ref{section:bifurcationIDEs}.0 
to \ref{th:multisingularities}). 
In \S\,\ref{geometry-IDE}, we explain the ideas of elementary contact geometry and singularity theory 
on which most theorems of the paper are based; there, we study IDE as surfaces in a contact $3$-manifold and 
describe the curve of inflections (of the solutions of an IDE) in terms of pairs of Legendre fibrations. 
In \S\,\ref{proofs}, we state and prove the theorems of \S\,\ref{section:bifurcationIDEs} in the setting of 
contact geometry and singularity theory (Ths.\,\ref{proofs}.0 to \ref{multi-sing}).  
\smallskip

\noindent
\textbf{Part II} (On surfaces in $3$-space): 
In \S\,\ref{classification}, we recall the classification of points of a surface (by 
the contact with its tangent lines) and some robust properties.  
In \S\,\ref{section:bifurcation-theorems}, we state Theorems\,\ref{th:bigodron&flecgodron} 
to \ref{theorem:flat-umbilics} on transitions of the flecnodal curve and give further results. 
In \S\,\ref{preparatory-results}, we explain the connection between surfaces and our contact geometry study of IDE. 
In \S\,\ref{sect:proofs-of-theorems-on-surfaces}, we prove the theorems of \S\,\ref{section:bifurcation-theorems}. 

The reader interested only in the transitions of the flecnodal and parabolic curves on evolving surfaces in $3$-space 
can directly go to \S\,\ref{classification} and \S\,\ref{section:bifurcation-theorems} 
(in Part\,II) and then read \S\,\ref{geometry-IDE}, \S\,\ref{preparatory-results} and \S\,\ref{sect:proofs-of-theorems-on-surfaces}. 
\medskip

\noindent
{\footnotesize 
{\bf Acknowledgements.} I thank D.\,Panov, M.\,Kazarian, P.\,Pushkar, F.\,Aicardi and T.\,Ohmoto
for useful comments, to D.\,Meyer for valuable remarks to the first 
version (2001), to UMI2001 CNRS Solomon Lefschetz 
UNAM M\'exico where I wrote the last version and to the two referees whose numerous 
remarks help a lot to improve the presentation.} 
\medskip

\noindent 
\emph{To the memory of V.I.\,Arnold who, in life, pressured me to publish this work.} 



\part{Transitions of Evolving BIDE}

\section{Transitions of the curve of inflections of BIDE}\label{section:bifurcationIDEs}

Given a BIDE $F(x,y,p)=0$, the critical set of the map $\pi_{|_{\V^F}}:\V^F\to\R^2$ 
(given by $F=F_p=0$) is called the \textit{criminant} of that BIDE, and noted $\widehat{D}$. 
\medskip

\noindent
{\bf Theorem\,$\mathbf{\ref{section:bifurcationIDEs}.0}$}.\label{sdpoint}
$(i)$ {\em If a folded singular point $(x,y)$ of a BIDE $F=0$ verifies the following 
genericity conditions at its characteristic point $(x,y,p)$$:$
\smallskip

\noindent
$a)$ the criminant $\widehat{D}$ of $\V^F$ is not tangent to the
$\pi^\vee$-contour $\widehat{I}$ of $\V^F$; 

\noindent
$b)$ the $\pi^\vee$-fibre is not tangent to the $\pi^\vee$-contour $\widehat{I}$ of $\V^F$;

\noindent
$c)$ the $\pi$-fibre is not tangent to the $\pi^\vee$-contour $\widehat{I}$ of $\V^F$;  
\smallskip

\noindent
then the curve of inflections $I$ is quadratically tangent to the discriminant $D$. 

\noindent
$(ii)$ That point locally separates $I$ into its left and right branches} (Fig.\,\ref{fig:left-right_branches}). 
\begin{figure}[ht]
\centerline{\psfig{figure=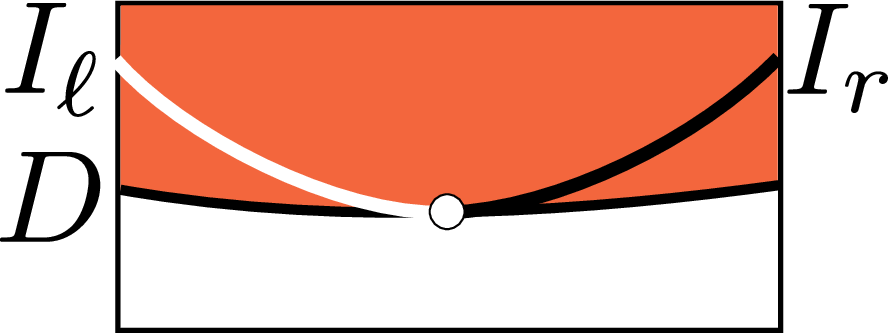,height=0.92cm}}
\caption{\small A folded singular point separates the left and right branches of $I$.}
\label{fig:left-right_branches}
\end{figure}

\noindent
{\sc Note}. In terms of $F$, the genericity conditions $a, b, c$ are given by
\medskip

\noindent 
$a) \ F_{y} \cdot \left|
\begin{array}{cc}
F_{xp}  & F_{pp} \\
F_{xx}+pF_{xy}  & F_{xp}+pF_{yp} +F_y 
\end{array}
\right|\neq 0$;  \ \ \ \ \ $b)  \ (\D_x+p\,\D_y)^2F\neq 0$; 
\bigskip

\noindent
$c) \ F_{xp}+pF_{yp}+F_y\neq 0\,. $
\medskip

Item $(i)$ of Theorem\,$\ref{section:bifurcationIDEs}.0$ appeared first in \cite{BT}; 
however, the condition $c$, $F_{xp}+pF_{yp}+F_y\neq 0$, 
which prevents the curve of inflections from having a cusp, was absent, and 
the left-right statement of $(ii)$ was not considered.

\begin{theorem}\label{th:bifurcation-withPsmooth}
Let $\V^{F_\e} \subset J^1(\R,\R)$ be a generic $1$-parameter family of smooth surfaces (of IDEs). 
If the surface $\V=\V^{F_0}$ has a characteristic point $Q$ that breaks one of the conditions a, b or c of 
Theorem\,$\ref{section:bifurcationIDEs}.0$, then the curve of inflections of $\V^{F_\e}$ 
undergoes the following respective transitions$:$

\noindent 
$a)$ UW$:$ $(\e=0)$ at the folded singularity $q=\pi(Q)$ the discriminant $D=\pi(\hat{D})$ has $4$-point 
contact with the curve of inflections $I=\pi(\,\hat{I}\,)$. It disappears $(\e < 0)$ or splits into two folded 
singular points of opposite indices $(\e > 0)$.

\noindent 
$b)$ flec-folded singularity$:$ a negative folded singular point overlaps with a biflecnode which passes from one branch 
of the curve of inflections to the other. At $\e = 0$ the curve of inflections 
$I=\pi(\,\hat{I}\,)$ has an inflection at $q=\pi(Q)$.

\noindent
$c)$ $\g v:$ the union of the curves of inflections of $\V^{F_\e}$, with $\e\in (-1, 1)$, 
is a surface of the ``plane-time'' $\R^2\times (-1, 1)$ locally diffeomorphic to the Whitney umbrella. 
For $\e=0$ the curve $\pi(\,\hat{I}\,)$ has a semi-cubic cusp at the folded singular point $q=\pi(Q)$. 
For any sufficiently small $|\e|\neq 0$ the folded singular point is generic {\rm (as in Th.\,\ref{section:bifurcationIDEs}.0)}. 
\end{theorem}

At a critical point $Q$ of a function $F:J^1(\R,\R)\to\R$ the second degree terms of the Taylor 
series of $F$ determine a quadratic cone $K\subset T_QJ^1(\R,\R)$. 
\begin{theorem}\label{th:6bifurcations-BIDE}
Let $\{F_\e(x,y,p)=0,\ \e \in\R\}$ be a generic one parameter family of IDEs. 
If $F=F_0$ has a Morse critical point in general position with respect to $\pi$ and to 
$\pi^\vee$ {\rm (see \S.\,\ref{section:cone-fibrations})}, and the $\pi$- and $\pi^\vee$-fibres 
are not conjugate diameters of the cone $K$ defined by $F$ {\rm (see \S.\,\ref{section:cone-fibrations})}, 
then both curves $D$ and $I$ undergo a Morse transition, giving one of the transitions $a$ to $f$ in Fig.\,\ref{inflections}, 
according to the relative position of $K$ with the $\pi$- and $\pi^\vee$-fibres {\rm (Fig.\,\ref{conos})}. 
Moreover, two folded singularities of equal indices are born or die. 
\end{theorem}

\begin{remark*} 
Supposing our critical point is the origin, the conditions that guarantee 
it is in general position are (see example \ref{quadratic-cone}):
\[(F_{xp}^2-F_{xx}F_{pp})(\bar{0})\neq 0,\quad F_{pp}(\bar{0})\neq 0\quad
\mathrm{and}\quad F_{xx}(\bar{0})\neq 0\,.\]
The $\pi$- and $\pi^\vee$-fibres are not conjugate diameters of $K$ iff $F_{xp}(\bar{0})\neq 0$ 
(see Lemma\,\ref{Lemma C} and Prop.\,\ref{prop:vertical-flecsurface}). 

The Morse transition types of $I_\e$ and of $D_\e$ are those   
of their respective degree two approximations (formed by conics). We get these types from the signs 
of the discriminants of their respective quadratic forms:
%
\[
-F_{xx}F_{xp}^2
\left|
\begin{array}{rrr}
F_{xx}  &  F_{xy}  & F_{xp} \\
F_{xy}  &  F_{yy}  & F_{yp} \\ 
F_{xp}  &  F_{yp}  & F_{pp}
\end{array}
\right|_{|(\bar{0})} \ \ \mathrm{and} \ \ 
-F_{pp}
\left|
\begin{array}{rrr}
F_{xx}  &  F_{xy}  & F_{xp} \\
F_{xy}  &  F_{yy}  & F_{yp} \\ 
F_{xp}  &  F_{yp}  & F_{pp}
\end{array}
\right|_{|(\bar{0})}\,. 
\]

Thus the transitions at $\e=0$ of $I_\e$ and $D_\e$ are both of center-type or both of saddle-type 
iff $F_{xx}F_{pp}>0$, and are of different types iff $F_{xx}F_{pp}<0$. 
\end{remark*}

According to \cite{Whitney3}, the fold and the Whitney pleat are the only stable singularities 
of maps from a surface to the plane - types 1 and 2 in Fig\,\ref{sing-gener-proj}.
On movement of the surface, in a generic $1$-parameter family, there are only three other 
singularities which can appear at isolated moments - they reduce locally to the projection, 
along the $x$-axis, of the surfaces $z=f(x,y)$ of types 3, 4 or 5 (\cite{Arnoldspr}), 
whose normal forms and contours are shown in Fig.\,\ref{sing-gener-proj}.
\begin{figure}[ht]
\centerline{\psfig{figure=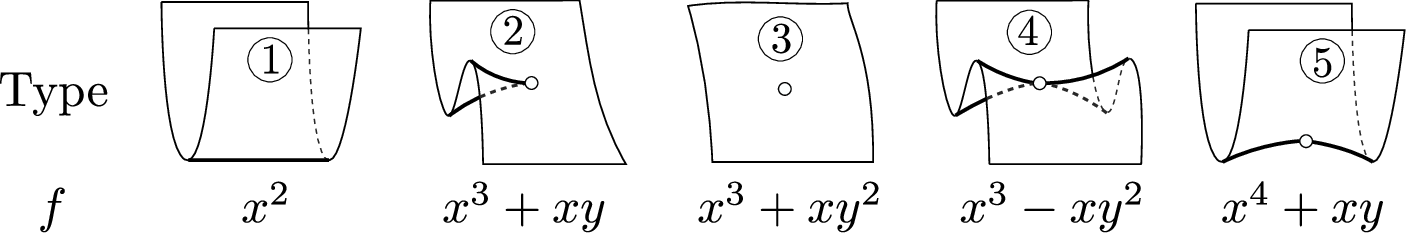,height=2cm}}
\caption{\small Generic singularities of projections in one-parameter
families of surfaces.}
\label{sing-gener-proj}
\end{figure}

\noindent
(the surfaces are projected into the $(y,z)$-plane, and the 
reduction is realised by a change of coordinates of the form
$X(x,y,z)$, $Y(y,z)$, $Z(y,z)$).
\smallskip


Let $\V^{F_\e}\subset J^1(\R,\R)$ be the surface of the BIDE $F_\e=0$. Write $\V^F:=\V^{F_0}$.  
\begin{theorem}\label{th:IDE-swallowtail-lips-bec-a-bec}
If the map $\pi^\vee_{|_{\V^F}}:\V^F \to (\R^2)^\vee$, has a singularity of type $3$, $4$ or $5$ 
of Fig.\,\ref{sing-gener-proj}, then when $\e$ passes through $0$, the curve of 
inflections of the solutions of the BIDE $F_\e=0$ undergoes, respectively, the lips-, bec-\`a-bec- 
or swallowtail-transition in Fig.\,\ref{inflections} 
{\rm (Compare with Fig.\,\ref{apparent} and Figs.\,\ref{fig:fold-inflection} and \ref{bi-inf-cusp})}
\end{theorem}

\begin{figure}[ht]
\centerline{\psfig{figure=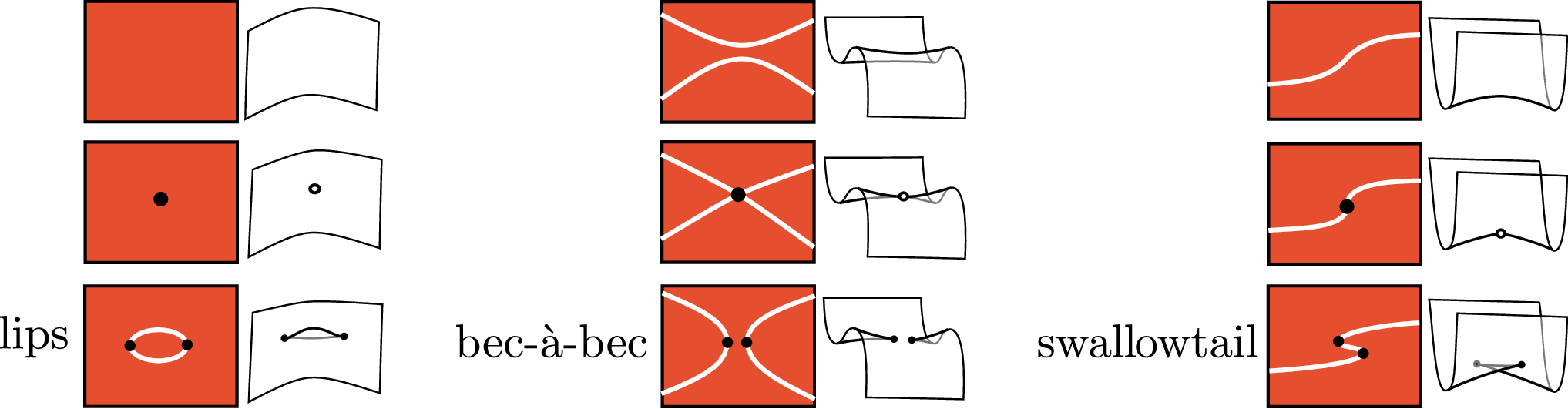,height=3cm}}
\caption{\small The transitions of the curve of inflections correspond to  
the transitions of projections of surfaces in generic one-parameter families.}
\label{apparent}
\end{figure}

\begin{theorem}\label{th:multisingularities}
Consider a generic 1-parameter family of BIDE $F_\e=0$.

\noindent
$(i)$ If for $F_0$, $I_\ell$ and $I_r$ are quadratically tangent at $q$, then at $\e=0$
we get a creation/annihilation transition of two hyperbolic nodes 
{\rm (c/a-h$\cdot$n in Fig.\,\ref{inflections})}.

\noindent 
$(ii)$ If $q$ is a hyperbolic node for $F=F_0$ and $q$ is also a left bi-inflection, then as $\e$ passes 
through $0$ a left bi-inflection that moves along $I_\ell$ crosses $I_r$ at the hyperbolic node $q$ 
{\rm (flec-h$\cdot$n in Fig.\,\ref{inflections})}. {\rm (Left and right may be interchanged.)} 

\noindent 
$(iii)$ If for $F_0$ two complex conjugate branches of $I$ are tangent at $q$, 
then at $\e=0$ we get a creation/annihilation transition of two elliptic nodes.  
\end{theorem}

Theorems\,\ref{section:bifurcationIDEs}.0 to \ref{th:multisingularities} are particular 
cases of the contact geometry Theorems \ref{proofs}.0 to \ref{multi-sing}, 
which are stated and proved in Section\,\ref{proofs}. 

\subsection{Transitions of IDE in a Riemannian plane}
Suppose a smooth Riemannian metric is given in the $xy$-plane $\R^2$. 

A {\em geodesic inflection} (resp. {\em geodesic bi-inflection}) of a smoothly 
immersed curve in the plane is a point at which the curve has $3$-point 
contact (resp. $4$-point contact) with its tangent geodesic at that point. 

The theory developed in the next section implies the 
\smallskip

\noindent
{\bf Claim}.\label{riemannian}
\textit{Theorem\,$\ref{section:bifurcationIDEs}.0$ holds for the curve of geodesic inflections.} 
\textit{The transitions of the curve of geodesic inflections and of the 
discriminant in generic $1$-parameter families of IDEs are those 
described in Th.\,\ref{th:bifurcation-withPsmooth} to Th.\,\ref{th:multisingularities}} (Fig.\ref{inflections}).

\section{IDEs as Surfaces in a Contact $3$-Space}\label{geometry-IDE}
{\footnotesize 
We give all necessary tools to understand the geometry of the curve of inflections of an IDE, 
showing the 
Legendrian nature of inflections and providing an ``abstract'' (but geometric) definition
of inflection in terms of contact geometry.} 
\medskip

\noindent
\textit{\textbf{Contact Structure}}. A {\em contact structure} on a smooth manifold is a
maximally non-integrable field of hyperplanes - called {\em contact hyperplanes}.
\medskip

{\em Contact structures may exist only on odd-dimensional manifolds} $M^{2n+1}$. 
\smallskip

A field of hyperplanes is locally defined as the field of
kernels of a differential $1$-form, say $\a$. The {\em maximal nonintegrability condition}
is $\a\wedge(d\a)^n\neq 0$.

\begin{example}
The $3$-space $J^1(\R,\R)$ has a contact structure whose 
contact planes are the kernels of the {\em contact form} $\a=pdx-dy$\,:
at $Q\in J^1(\R,\R)$, the contact plane is $\,\Pi_Q=\ker(pdx-dy)_{|Q}\subset T_QJ^1(\R,\R)$.
\end{example}

\subsection{Characteristics\,: Geometric Solutions of IDE} \label{characteristics}

{\em The geometry of IDE is part of the geometry of surfaces in contact $3$-spaces.}
\medskip

Its description needs the most important objects of contact geometry\,: 
\medskip

\noindent
\textit{\textbf{Legendre submanifolds}}. 
A {\em Legendre submanifold} of a contact manifold is an integral submanifold $L$ of the field of contact
hyperplanes of the highest possible dimension (dimension $n$ in a $2n+1$-dimensional contact manifold). 
If the contact structure is locally given by the zeros of a contact form $\a$, the restriction of $\a$
to $L$ vanishes\,: $\a_{|_L}=0$. 

\begin{example}\label{1-graph}
The {\em $1$-graph} of a smooth function $f:\R\ra \R$, which 
is the immersion in $J^1(\R,\R)$ given by $j^1f:x\mapsto (x,f(x),f'(x))$, is a Legendre
curve of $J^1(\R,\R)$. Indeed,  $\a_{(x,f(x),f'(x))}(1,f'(x),f''(x))=f'(x)\cdot 1-f'(x)=0$. 
\end{example}

\noindent 
\textit{\textbf{Legendrian Fibration}}. 
A fibration $M^{2n+1}\ra B^{n+1}$ of a contact manifold $M^{2n+1}$ is said to be {\em Legendrian} 
if all its fibres are Legendre submanifolds.
    
\begin{example}\label{standard}
The `forgetting derivatives map' $\pi:J^1(\R,\R)\rightarrow J^0(\R,\R)$ given by $(x,y,p)\mapsto (x,y)$
is called {\em standard Legendrian fibration}. Indeed, it is Legendrian because the restriction of the
contact form $\a=pdx-dy$ to the vertical lines vanishes\,:
$\,\a_{(x_0,y_0,p)}(0,0,\dot p)=p\cdot 0-0=0.$   
\end{example}

\noindent 
\textit{\textbf{Legendre map}}. A {\em Legendre map} is the projection of a Legendre submanifold of the space
of a Legendrian fibration to its base along its Legendre fibres. 
\medskip

\noindent 
\textit{\textbf{Front}}. The image of a Legendre map $L\subset M\ra B$ is called the {\em front} of $L$. 
\smallskip

So given two Legendrian fibrations of $M$, $\rho_1:M\ra B_1$ and  $\rho_2:M\ra B_2$, every
Legendre submanifold $L\subset M$ has two fronts: the front $\rho_1(L)$ in $B_1$ and the front $\rho_2(L)$ in $B_2$.

\begin{example}\label{graph-front}
Under the standard fibration $\pi:J^1(\R,\R)\rightarrow J^0(\R,\R)$, the front of the $1$-graph $j^1f$ 
of a function $f$ is the graph of $f$ (Examples\,\ref{1-graph}-\ref{standard}).
\end{example}  

{\em On any smooth surface $\V$ of a contact $3$-manifold $M$ the contact structure defines
an intrinsic field of tangent lines}\,: 
\smallskip

\noindent 
\textit{\textbf{Characteristics of a surface}}. 
At almost every point $Q$ of a smooth surface $\V\subset M$ in general position, 
the tangent plane of $\V$ is different from the contact plane. 
These planes intersect on a line tangent to $\V$ at $Q$. 
This defines the field of {\em characteristic tangent lines}
on $\V$. Its integral curves, called the {\em characteristics} of $\V$, are (by construction)
Legendre curves of $M^3$.
\smallskip

\noindent 
\textit{\textbf{Characteristic Fronts}}.
Given a Legendrian fibration $\rho:M^3\ra B$, a curve of $B$ is called
{\em characteristic front of the surface $\V\subset M$} if it is the front  
of a characteristic curve of $\V$ by $\rho:M^3\ra B$.

\begin{example*}
{\em The (geometric) solutions of a given IDE $F(x,y,p)=0$ are the characteristic fronts
of its surface $\V^F\subset J^1(\R,\R)$ by the standard Legendrian fibration $\pi:(x,y,p)\mapsto (x,y)$.}  \\
That is, the solutions are the fronts of the characteristic curves of $\V^F$. 
\end{example*}

\noindent
\textit{\textbf{Characteristic Point}}.
A {\em characteristic point} of a surface $\V\subset M^3$ is a point at which the tangent plane of $\V$
coincides with the contact plane.
\smallskip

\noindent
\textit{\textbf{Index of a Characteristic Point}}. 
For a surface $\V\subset M^3$ in general position the set of characteristic points is discrete.
At these points the field of characteristic lines is singular, having index $+1$ or $-1$ (Fig.\,\ref{folded}).
\medskip

\noindent
\textit{\textbf{Folded Singular Point}}. A {\em folded singular point} of a surface $\V\subset M$ is the image 
by a Legendrian fibration $\rho:M\to B$ of a characteristic point of $\V$. 
\smallskip

For example, a {\em folded singular point} of the IDE $\,F(x,y,p)=0$ is the
image by $\pi:(x,y,p)\mapsto (x,y)$ of a characteristic point of the surface $\V^F$ (see \S\,\ref{subsect:bifurcations of BIDE}). 

Near a folded singular point with smooth discriminant, the IDE is reduced to the normal form
$y=(p+kx)^2$ by a diffeomorphism on the $xy$-plane \cite{Davidov}. 
Depending on the `modulus' $k\in \R$, the characteristic point can be a saddle, a focus or a node. 
Their respective folded singular points, depicted 
in Fig.\,\ref{folded}, are called {\em folded focus}, 
{\em folded node} and {\em folded saddle}.
\medskip

\begin{figure}[ht]
\centerline{\psfig{figure=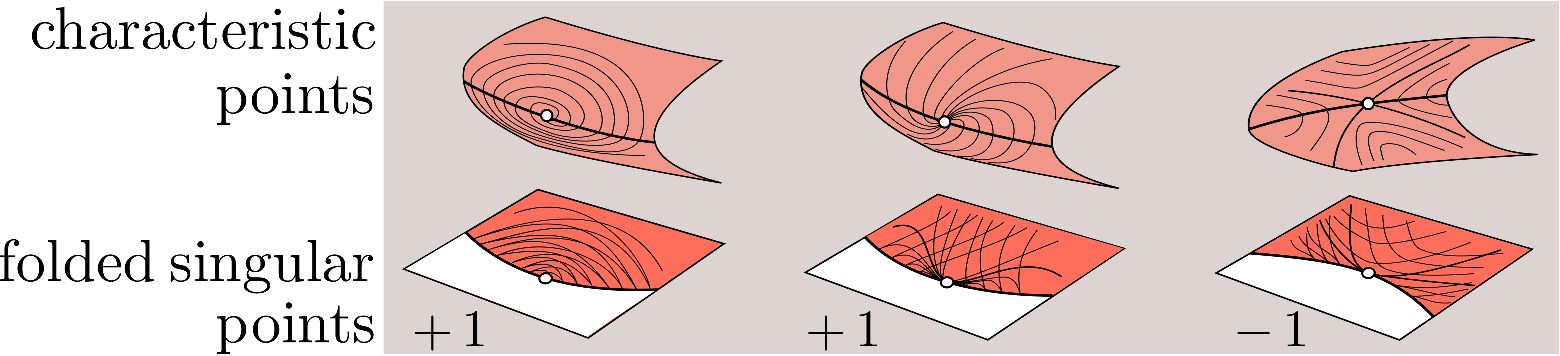,height=2.5cm}}
\caption{\small Folded focus, node and saddle of IDEs with their respective indices.}
\label{folded}
\end{figure}

\noindent
\textit{\textbf{Index of a Folded Singularity}}.  The {\em index of a folded singular point} is 
the index of its characteristic point for the characteristic field on $\V$ (Fig.\,\ref{folded}). \label{indices-page}
\medskip

\noindent
\textit{\textbf{Left and Right Characteristic Fronts}}. Let $\V\subset M^3$ be an oriented surface 
and $\rho:M^3\to B$ a Legendre fibration with $B$ oriented. 
The set of fold singularities of the projection $\rho_{|_\V}:\V\to B$ separates $\V$ 
into two parts $\V_\ell$ and $\V_r$, called \textit{left} and \textit{right parts} of $\V$, 
on which $\rho_{|_{\V}}:\V\to B$ respectively reverses or preserves the orientation. 
A characteristic of the left part $\V_\ell$ (resp. $\V_r$) 
is said to be \textit{left} (resp. \textit{right}), and its projection by $\rho$ is called 
a \textit{left}  (resp. \textit{right}) \textit{characteristic front}. 

\medskip 

\noindent
\textit{\textbf{Left and Right Solutions of IDE}}. 
The same way, orienting the surface $\V^F$ and the $(x,y)$-plane,  the critical set of the map 
$\pi_{|_{\V^F}}:(x,y,p)\mapsto(x,y)$, called \textit{criminant} of 
the IDE (and given by $F=F_p=0$), separates $\V^F$ into its {\em left} and {\em right parts}. 
A solution curve of the IDE $F = 0$ is {\em left} ({\em right}) if it is the projection of a characteristic 
of the left part $\V^F_{\ell}$ (resp. $\V^F_r$).

\subsection{Duality as Geometry of Pairs of Legendrian Fibrations} 

\begin{example}\label{legendre-dual}
The {\em standard dual fibration} $\pi^\vee :J^1(\R,\R)\rightarrow (\R^2)^\vee$  (or {\em Legendre transform})
given by $(x,y,p)\mapsto (X,Y)=(p,px-y)$ is Legendrian. Indeed, the fibre over the point $(X,Y)=(a,b)$ is
the line of equation $ax-y=b$ (on the horizontal plane $p=a$) whose parametrisation
$x\mapsto (x, ax-b, a)$ satisfies $\,\a_{(x,ax-b,a)}(\dot x,a\dot x,0)=a\dot x-a\dot x=0$.
Observe that the front of this
$\pi^\vee$-fibre by the map $\pi$ of Example\,\ref{standard} is the line $y=ax-b$ of the $xy$-plane.

Conversely, the front by $\pi^\vee$ of the $\pi$-fibre $(x_0, y_0, p)$ over $(x_0, y_0)$ is 
the line $Y=x_0X-y_0$ on the $XY$-plane $(\R^2)^\vee$, parametrised by $p\mapsto (p, px_0-y_0)$.
\end{example}

{\em Then the correspondence \,point$\,\leftrightarrow\,$line\, in projective duality
is provided by the Legendrian fibrations
$\,\pi:J^1(\R,\R)\ra\R^2\,$ and $\,\pi^\vee:J^1(\R,\R)\ra(\R^2)^\vee$}. Namely, 
{\em the respective fronts by $\pi$ and $\pi^\vee$ of a $\pi$-fibre are a point of $\R^2$
and a line of $(\R^2)^\vee$, while those
of a $\pi^\vee$-fibre are a line of $\R^2$ and a point of $(\R^2)^\vee$.} 

\begin{remark*}
\textit{The front of the $1$-graph $j^1f$ of a function $f$ by the dual fibration 
$\pi^\vee:J^1(\R,\R)\ra(\R^2)^\vee$ is 
the graph of the so-called Legendre transform of $f$.} 
\end{remark*}

\noindent
{\bf Legendre Duality and Pairs of Legendrian Fibrations}. 
Projective duality is generalised (at least locally) by replacing the contact space $J^1(\R,\R)$ and
its standard Legendrian fibrations $\pi$, $\pi^\vee$ with any contact $3$-manifold $M$ and a pair of
{\em transverse Legendrian fibrations} $\rho:M\ra B$, $\rho^\vee:M\ra B^\vee$ (i.e., such that
their fibres are nowhere tangent). This was discussed in \cite{arnoldcs}. 
\smallskip

\noindent 
In the sequel $M$ denotes a contact $3$-manifold, $\rho:M\ra B$ and $\rho^\vee:M\ra B^\vee$  
a pair of transverse Legendrian fibrations and $\V\subset M$ is a smooth surface. 
\smallskip

\noindent
For the standard fibrations of $J^1(\R,\R)$ we shall keep the notations $\pi$, $\pi^\vee$. 
\smallskip

\noindent 
\textit{\textbf{Generalised Lines}}. We say that the front by $\rho:M\ra B$ of a $\rho^\vee$-fibre is a 
{\em line of} $B$ and the front by $\rho^\vee:M\ra B^\vee$ of a $\rho$-fibre is a {\em line of} $B^\vee$. 
\smallskip

\noindent 
\textit{\textbf{Generalised Inflections}}.
An {\em inflection} ({\em bi-inflection}) of a smoothly immersed curve in $B$ is a point at which 
the curve has $3$-point contact 
(resp. $4$-point contact) with a line of $B$. 
\smallskip

In this generalised duality, which is governed by the pair of Legendrian fibrations, 
the usual correspondence ``\,inflection$\,\leftrightarrow\,$cusp\,'' holds.

\subsection{Inflections, Legendrian Fibrations and Contours} \label{sect:inf-Leg_fibrations-contour}

\begin{lemma}\label{1-graph-model}
The $1$-graphs of two smooth functions, $f,g:\R\rightarrow \R$, have 
$2$-point contact (resp. $3$-point contact) at a point 
$(x_0,y_0,p_0)\in J^1(\R,\R)$ iff their graphs have $3$-point contact 
(resp. $4$-point contact) at $(x_0,y_0)$. 
\end{lemma}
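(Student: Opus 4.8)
The plan is to reduce the statement to an elementary fact about orders of vanishing, exploiting that both the graphs and the $1$-graphs are honest graphs over the common $x$-axis. After a translation I may assume $x_0=0$, and I set $h:=f-g$; since the two $1$-graphs are assumed to pass through the common point $(x_0,y_0,p_0)$, we have $h(x_0)=0$ and $h'(x_0)=0$. First I would recall that for two plane curves written as graphs $y=f(x)$ and $y=g(x)$, ``$n$-point contact'' at $(x_0,y_0)$ means precisely that the height difference $h=f-g$ vanishes to order exactly $n$ at $x_0$, i.e. $h(x_0)=\cdots=h^{(n-1)}(x_0)=0$ while $h^{(n)}(x_0)\neq 0$. (This matches the conventions fixed earlier: a generic tangent line, for which $h''\neq 0$, has $2$-point contact, whereas an inflection, where $h''=0$ as well, gives at least $3$-point contact.)

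Next, observe that the $1$-graphs $j^1f\colon x\mapsto(x,f(x),f'(x))$ and $j^1g\colon x\mapsto(x,g(x),g'(x))$ are again graphs over the same $x$-axis, now with fibre map $x\mapsto(f(x),f'(x))$. Hence their order of contact at $(x_0,y_0,p_0)$ is the order of vanishing of the vector difference $(h(x),h'(x))$, the shared first coordinate contributing nothing. The key --- and essentially the only --- computation is then the relation between the vanishing orders of $h$ and of its derivative: if $h$ vanishes to order exactly $n\geq 1$ at $x_0$, then $h'$ vanishes to order exactly $n-1$, so the pair $(h,h')$ vanishes to order $\min(n,n-1)=n-1$. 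I would make this transparent by writing the Taylor expansion $h(x)=c_n(x-x_0)^n+\cdots$ with $c_n\neq 0$, whence $h'(x)=n\,c_n(x-x_0)^{n-1}+\cdots$ with leading coefficient $n\,c_n\neq 0$.

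Combining the two paragraphs gives the clean general statement: the $1$-graphs have $(n-1)$-point contact at $(x_0,y_0,p_0)$ if and only if the graphs have $n$-point contact at $(x_0,y_0)$. Specialising $n=3$ yields the first claimed equivalence ($2$-point contact of the $1$-graphs $\Leftrightarrow$ $3$-point contact of the graphs) and $n=4$ yields the second ($3$-point contact $\Leftrightarrow$ $4$-point contact); the same one-step shift handles the ``at least'' versions by monotonicity. The only point demanding care --- the ``main obstacle'', such as it is --- is justifying that the contact of these space curves may legitimately be measured by the order of agreement of their fibre maps over $x$; this is where one uses that $j^1f$ and $j^1g$ are genuine sections of the projection $(x,y,p)\mapsto x$, so that no reparametrisation ambiguity arises. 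Everything else is the routine Taylor bookkeeping above.
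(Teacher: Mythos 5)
Your proof is correct and follows essentially the same route as the paper's: both exploit that the $1$-graphs are sections over the $x$-axis, so their contact is measured by agreement of $(f,f')$ with $(g,g')$, and the one-step shift comes from the fact that matching $h=f-g$ through order $n$ is the same as matching the pair $(h,h')$ through order $n-1$. The paper simply writes out this derivative-matching explicitly for the two cases $n=3,4$, whereas you package the identical computation as a statement about vanishing orders valid for all $n$ at once.
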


\begin{proof}
The graphs of $f$ and $g$ have $3$-point contact at $(x_0,y_0)$ if and only 
if $f(x_0)=y_0=g(x_0)$, $f'(x_0)=g'(x_0)$ and $f''(x_0)=g''(x_0)$. 
These three equalities imply and are implied by the following two equalities 
defining the tangency of the $1$-graphs of $f$ and $g$\,: 
$(x_0,f(x_0),f'(x_0))=(x_0,g(x_0),g'(x_0))$ and 
$(1,f'(x_0),f''(x_0))=(1,g'(x_0),g''(x_0))$. The proof for the case 
of $4$-point contact of the graphs of $f$ and $g$ is similar. 
\end{proof}

\begin{lemma}\label{tangent->3}
Consider a contact $3$-manifold $M$ and a Legendrian fibration $\rho:M\to B$. 
Two Legendre curves in $M$ which are not tangent to the 
$\rho$-fibre at a point $Q$ have $2$-point contact ($3$-point contact) at $Q$ 
iff their fronts by $\rho$ in $B$ have $3$-point contact (resp. $4$-point contact) at $\rho(Q)$. 
\end{lemma}

\begin{proof} 
Since {\em all Legendrian fibrations of fixed dimension are locally} 
(in the bundle space) {\em contact diffeomorphic} (see \cite{avg}), 
our Legendrian curves in $M$ are locally contact-equivalent to 
the $1$-graphs in $J^1(\R,\R)$ of two smooth functions, and their fronts by $\pi$ 
in $B$ are diffeomorphic equivalent to the pair of graphs of that functions in 
$\R^2$. Now, Lemma\,\ref{tangent->3} follows from Lemma\,\ref{1-graph-model}.  
\end{proof}

\begin{example*}[Inflection$\,\leftrightarrow\,$cusp]
Let $f:\R\ra\R$ be a smooth function. By Lemma\,\ref{tangent->3} its graph has a usual inflection
at $x_0$ iff its $1$-graph is tangent to the $\pi^\vee$-fibre of $J^1(\R,\R)$ at
$(x_0,f(x_0), f'(x_0))$, i.e. $f''(x_0)=0$. This implies that the dual curve
on the $XY$-plane $\g^\vee:x\mapsto (f'(x), f'(x)x-f(x))$ has a cusp at $x_0$.
Namely, $(\g^\vee)'(x_0)=(f''(x_0),x_0f''(x_0))=(0,0)$. 
\end{example*}

\begin{example*}
Check that the inflection of the graph of $f(x)=x^3$ corresponds to the cusp of its dual curve
$\g^\vee:x\mapsto (X(x), Y(x))=(3x^2, 2x^3)$. 
\end{example*}


\noindent
\textit{\textbf{Left and Right Curve of Inflections}}. 
The \textit{left} (\textit{right}) \textit{curve of inflections} $I_\ell$ (resp. $I_r$) 
of an IDE consists of the inflections of its left (resp. right) solution curves.
\medskip

\noindent
\textit{\textbf{Contour and Discriminant}}. 
The set of critical points of the restriction of $\rho$ to $\V$, $\,\rho_{|_\V}:\V\ra B$,\, noted $\hat{D}$,
is called {\em $\rho$-contour} of $\V$. The {\em $\rho$-discriminant} of $\V$ is the 
set $D=\rho(\hat{D})$ of critical values of $\rho_{|_\V}$. 

\begin{example*}
Given an IDE $F(x,y,p)=0$ and the standard Legendrian fibration $\pi:J^1(\R,\R)\ra \R^2$, 
the $\pi$-contour of $\V^F$ (or {\em criminant} of the IDE $F=0$) is the subset $\hat{D}$ 
of $\V^F$ at which $F_p=0$. 
If the surface $\V^F$ is in general position its $\pi$-contour $\hat{D}$ is a smooth curve (possibly empty).

We only consider IDEs whose contour by $\pi$ satisfies at every point $F_{pp}\neq 0$
(i.e. the $\pi$-fibre has exactly $2$-point contact with $\V^F$).

The $\pi$-discriminant of $\V^F$ is the usual {\em discriminant} $D$ of the IDE $F=0$. 
\end{example*}



\begin{remark*}
\textit{Given a smooth surface $\V$ of a contact $3$-manifold $M$,  
for every Legendrian fibration $\rho:M \ra B$, all characteristic points 
of $\V$ belong to its $\rho$-contour $\hat{D}$.} 
\end{remark*} 




\begin{proposition}\label{tangent-fold}
A Legendre curve $L$ is tangent to a characteristic of $\V$ at a (non characteristic) point 
of $\V$ iff $L$ is tangent to $\V$ at that point.
\end{proposition}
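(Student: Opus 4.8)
The plan is to reduce the statement to a pointwise comparison of lines and planes inside the three-dimensional tangent space $T_QM$. The three objects to track at the point $Q$ are the tangent line $T_QL$ of the Legendre curve, the tangent plane $T_Q\V$ of the surface, and the contact plane $\Pi_Q=\ker\alpha_Q$. By the very definition of the characteristic field, at a non-characteristic point $Q$ the planes $T_Q\V$ and $\Pi_Q$ are two \emph{distinct} $2$-planes of the $3$-space $T_QM$, so their intersection $\ell_Q:=T_Q\V\cap\Pi_Q$ is exactly one-dimensional and is precisely the characteristic direction at $Q$. Under this translation, ``$L$ is tangent to a characteristic of $\V$ at $Q$'' becomes the equality of lines $T_QL=\ell_Q$, while ``$L$ is tangent to $\V$ at $Q$'' becomes the inclusion $T_QL\subset T_Q\V$.

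First I would dispose of the forward implication, which is immediate and does not even use that $L$ is Legendre: if $T_QL=\ell_Q$, then since $\ell_Q\subset T_Q\V$ by construction we get $T_QL\subset T_Q\V$, i.e. $L$ is tangent to $\V$ at $Q$.

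For the converse I would use the Legendre condition on $L$ decisively. Being a Legendre curve means $\alpha_{|_L}=0$, so its tangent line lies in the contact plane: $T_QL\subset\Pi_Q$. Assuming in addition that $L$ is tangent to $\V$, we also have $T_QL\subset T_Q\V$. Intersecting these two inclusions yields $T_QL\subset T_Q\V\cap\Pi_Q=\ell_Q$, and since both $T_QL$ and $\ell_Q$ are one-dimensional this containment upgrades to the equality $T_QL=\ell_Q$. Hence $T_QL$ is the characteristic direction and $L$ is tangent to the characteristic through $Q$.

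The content is entirely linear-algebraic, so there is no real computational obstacle; the point I would take care to highlight in the write-up is the exact role of the two hypotheses. The non-characteristic assumption is what guarantees that $\ell_Q$ is a genuine line rather than the whole plane $\Pi_Q=T_Q\V$, and it is this that lets the inclusion $T_QL\subset\ell_Q$ become an equality by a dimension count. The Legendre hypothesis is what supplies $T_QL\subset\Pi_Q$ in the converse, without which tangency to $\V$ would not force tangency to the characteristic. Everything else reduces to the elementary fact that two distinct planes in a $3$-space meet along a single line.
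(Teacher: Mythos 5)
Your proof is correct and follows essentially the same route as the paper's: both identify the characteristic direction with the line $T_Q\V\cap\Pi_Q$ and use the Legendre condition $T_QL\subset\Pi_Q$ together with tangency $T_QL\subset T_Q\V$ to force $T_QL$ to be that line. Your write-up merely spells out the trivial forward implication and the role of the non-characteristic hypothesis more explicitly than the paper does.
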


\begin{proof}
A Legendre curve $L$ is tangent to $\V$ at a point iff at that point the
tangent line to $L$ lies in the tangent plane to $\V$; but,
since $L$ is Legendrian, its tangent line lies also in the contact 
plane. So this tangent line is a characteristic line of $\V$.
\end{proof}

\begin{fitheorem}
The curve of inflections of the characteristic fronts of $\V$ is the image under
$\rho$ of the $\rho^\vee$-contour $\hat{I}$ of $\V$ {\rm (Fig.\,\ref{fig:fold-inflection})}.  
\end{fitheorem}



\begin{proof} 
The front $\rho(\g)$ of a characteristic $\g$ of $\V$ has
an inflection at a point $q=\rho(Q)$ iff it has at least $3$-point contact
at $q$ with a line $\rho(\ell)$ of $B$ ($\ell$ being a $\rho^\vee$-fibre). 
This holds, by Lemma\,\ref{tangent->3}, iff the characteristic $\g$
has at least $2$-point contact with the $\rho^\vee$-fibre $\ell$ at $Q$.  
This means that $Q$ belongs to the $\rho^\vee$-contour $\hat{I}$ of $\V$ (by Proposition\,\ref{tangent-fold},
applied to $L=\ell$).
\end{proof}

Therefore, the map $\rho$ send the fold singularities of the map $\rho^\vee_{|_\V}:\V\ra B^\vee$ 
to the inflections of the characteristic fronts of $\V$ -- Fig.\,\ref{fig:fold-inflection}.
\begin{figure}[ht]
\centerline{\psfig{figure=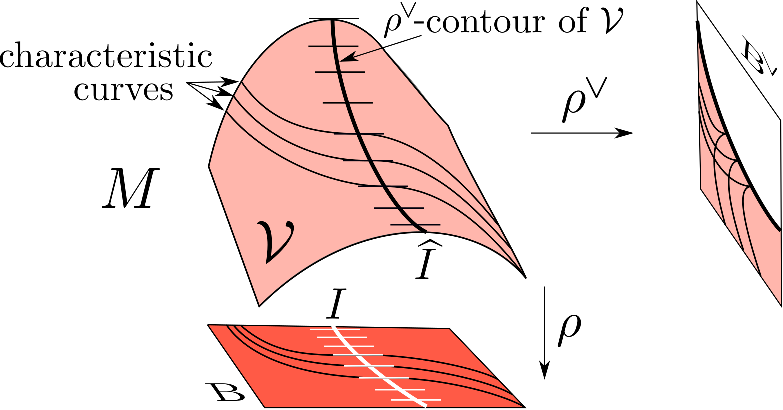,height=3.6cm}}
\caption{\small The map $\rho$ sends the critical set of $\rho^\vee:\V\ra B^\vee$ to the curve of inflections.}
\label{fig:fold-inflection}
\end{figure}

\subsection{Bi-inflections as Whitney Pleat Singularities}

It is well known that {\em a generic map between surfaces has fold singularities along 
a (possibly empty) curve on the source, whose closure is smooth} \cite{avg} (type $1$ in Fig.\,\ref{sing-gener-proj}). 
Besides fold singularities, such a map may only have the so-called {\em Whitney pleat singularity} at isolated points 
(type $2$ in Fig.\,\ref{sing-gener-proj})\,:  
\medskip

\noindent
\textit{\textbf{Whitney pleat}}. The {\em Whitney pleat}, also called the {\em Whitney cusp}, 
is a stable singularity and it is the singularity, at zero, of the map $(x, y) \mapsto (z, w)$: 
\[z=x^3+xy, \quad w=y\,.\]

The fold and the Whitney pleat reduce locally to the projection 
of the surfaces $z=x^2$ and $z=x^3+xy$ (in the $(x,y,z)$-space), respectively, 
to the $(y,z)$-plane along the $x$-axis. 

\begin{remark*}
Let $Q$ be a point of a characteristic $\g$ of $\V$. 
{\em If the front $\rho(\g)$ has a bi-inflection at $q=\rho(Q)$, 
then the surface $\V$ has at least $3$-point contact with the $\rho^\vee$-fibre at $Q$} 
(Lemma\,\ref{tangent->3}). Conversely, 
{\em if a $\rho^\vee$-fibre has $3$-point contact with $\V$ at $Q$ then that 
$\rho^\vee$-fibre has $3$-point contact with the characteristic $\g$}.  
That is, {\em the front $\rho(\g)$ has a bi-inflection at $q=\rho(Q)$}. 
\end{remark*}


So for $\V\subset M$ in general position, the Whitney pleats of $\rho^\vee_{|_\V}:\V\ra B^\vee$ 
correspond to the bi-inflections of the characteristic fronts of $\V$ by $\rho$ (this holds 
for any Legendrian fibration $\rho:M\rightarrow B$ transverse to $\rho^\vee$).
Therefore, we obtain the 
\medskip

\noindent
{\bf Bi-inflection-Pleat Theorem}.
{\em Let $\g$ be a characteristic of a smooth surface $\V\subset M$ in general position. 
The map $\rho^\vee_{|_\V}: \V\ra B^\vee$ has a Whitney pleat singularity at $Q\in\g$
iff the front $\rho(\g)$ has a bi-inflection at $b=\rho(Q)$.}
\begin{figure}[ht]
\centerline{\psfig{figure=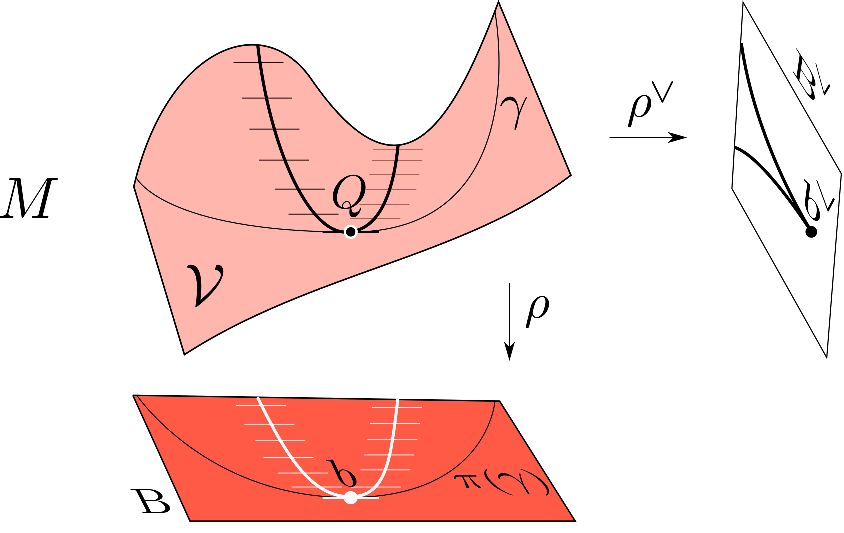, height=4.3cm}}
\caption{\small A Whitney pleat of the map $\rho^\vee:\V\ra B^\vee$ and a bi-inflection.}
\label{bi-inf-cusp}
\end{figure}

\begin{example*}
If $\V^F\subset J^1(\R,\R)$ is the surface of a generic IDE $F(x,y,p)=0$, 
the dual map $\pi^\vee_{|_{\V^F}}:\V^F\ra (\R^2)^\vee$ has a Whitney pleat at a
point $Q\in\V^F$ iff a solution through $b=\pi(Q)$ of the IDE $F=0$ has a
usual bi-inflection. 
\end{example*}

\begin{wremark}
For a generic map between two $2$-dimensional manifolds Whitney Theorem implies:  \ \
1. {\em The critical set is a smooth curve;} \\ 
2. {\em At each point of this curve the kernel of the derivative is $1$-dimensional;}  \\  
3. {\em This kernel is transverse to the critical set at its generic points, being tangent
to this curve at some isolated points.} 

An explicit calculation (using the above normal form) shows that the 
kernel of the derivative is tangent to the critical set only 
at the Whitney pleats.  
\end{wremark}

This implies the following proposition.

\begin{proposition}\label{kernel-tangent}
Let $\V\subset M$ be a generic smooth surface.
A $\rho^\vee$-fibre is tangent to the $\rho^\vee$-contour of $\V$ at a point 
$Q$ iff the front by $\rho$ of the characteristic curve of $\V$ through 
$Q$ has a bi-inflection at $q=\rho(Q)$. 
\end{proposition}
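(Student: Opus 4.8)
The plan is to reduce Proposition~\ref{kernel-tangent} to a short chain of equivalences, all but one of which are already available. The single genuinely new ingredient is a pointwise identification: at a point $Q$ of the $\rho^\vee$-contour $\hat I$, the kernel of the derivative of the restricted projection $\rho^\vee_{|_\V}:\V\ra B^\vee$ is exactly the tangent line to the $\rho^\vee$-fibre through $Q$. Granting this, tangency of a $\rho^\vee$-fibre to $\hat I$ at $Q$ becomes verbatim the statement that the kernel of $d(\rho^\vee_{|_\V})_Q$ is tangent to the critical set $\hat I$ at $Q$, and the remaining two equivalences are supplied by the Whitney Pleat Remark and the Bi-inflection-Pleat Theorem.

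First I would establish the kernel identification. By definition $\hat I$ is the critical set of $\rho^\vee_{|_\V}$, and $\rho^\vee:M\ra B^\vee$ is a fibration of a $3$-manifold over a surface, so its fibres are one-dimensional and $\ker d\rho^\vee_Q$ is precisely the tangent line to the $\rho^\vee$-fibre through $Q$. The kernel of the restricted derivative is therefore $\ker d(\rho^\vee_{|_\V})_Q = T_Q\V\cap\ker d\rho^\vee_Q$, the intersection of the tangent plane of $\V$ with the fibre direction. That $Q$ lies in $\hat I$ means exactly that this intersection is nontrivial, i.e. that the $\rho^\vee$-fibre is tangent to $\V$ at $Q$; and for $\V$ in general position the kernel is then exactly one-dimensional, so it coincides with the whole fibre tangent line. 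Thus along $\hat I$ the kernel direction of $d(\rho^\vee_{|_\V})$ and the $\rho^\vee$-fibre direction agree.

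With this in hand I would run the chain. The Whitney Pleat Remark states that for a generic map between surfaces the kernel of the derivative is tangent to the (smooth) critical curve precisely at the Whitney pleats; applied to $\rho^\vee_{|_\V}$ this gives that the kernel of $d(\rho^\vee_{|_\V})$ is tangent to $\hat I$ at $Q$ if and only if $\rho^\vee_{|_\V}$ has a Whitney pleat at $Q$. Combined with the kernel identification, tangency of the $\rho^\vee$-fibre to $\hat I$ at $Q$ is equivalent to a Whitney pleat of $\rho^\vee_{|_\V}$ at $Q$. Finally, the Bi-inflection-Pleat Theorem asserts that $\rho^\vee_{|_\V}$ has a Whitney pleat at $Q$ lying on the characteristic $\g$ through $Q$ if and only if the front $\rho(\g)$ has a ``bi-inflection'' at $\rho(Q)$. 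Chaining the two equivalences yields the proposition.

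I expect the only delicate point to be the genericity clause in the kernel identification: I must ensure that at a contour point the kernel of $d(\rho^\vee_{|_\V})$ is exactly, and not merely contained in, the one-dimensional fibre direction, so that ``the $\rho^\vee$-fibre is tangent to $\hat I$'' and ``the kernel is tangent to $\hat I$'' genuinely coincide. For $\V$ in general position $\hat I$ is a smooth curve along which $\rho^\vee_{|_\V}$ has corank one, corank-two degenerations being non-generic for maps of surfaces; this guarantees the equality of the two lines, and the transversality--tangency dichotomy of the Whitney Pleat Remark then applies without modification.
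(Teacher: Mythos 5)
Your proof is correct and follows essentially the same route as the paper: the paper also derives this proposition by combining the Whitney Pleat Remark (kernel tangent to the critical curve exactly at pleats) with the Bi-inflection-Pleat Theorem, using the identification of the kernel of $d(\rho^\vee_{|_\V})$ at contour points with the $\rho^\vee$-fibre direction. Your only addition is to make that kernel identification explicit (where, in fact, no genericity is needed for one-dimensionality, since the kernel is contained in the one-dimensional fibre direction), which the paper instead states only later, inside the proof of the corollary following the proposition.
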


\begin{corollary}[bi-inflection characterisation]\label{cor:bi-inflecrion}
A point $b\in B$ is a bi-inflection of a given characteristic front of $\V$ iff the
curve of inflections (of the neighbouring characteristic fronts of $\V$) is tangent, at $b$, 
to that given front.  
\end{corollary}

\begin{proof}
Since the kernel of the map $\rho^\vee_{|_\V}:\V \ra B^\vee$ at a point (of the 
$\rho^\vee$-contour $\hat{I}$) is the tangent space of the $\rho^\vee$-fibre through 
that point, the statement follows from 
Prop.\,\ref{tangent-fold}, Inflection-contour Theorem and 
Prop.\,\ref{kernel-tangent}.
\end{proof}

\subsection{Quadratic Cones and Legendrian Fibrations}\label{section:cone-fibrations}
{\footnotesize The next definitions and results are needed to prove Theorems\,\ref{th:6bifurcations-BIDE}, 
\ref{6-bifurcations} and \ref{A_3-bifurcations}.
             
Let us list the generic positions of a quadratic cone and two lines through its
vertex.}
\medskip

\noindent
\textit{\textbf{Cone in General Position}}. 
A non degenerate quadratic cone in the vector space $\R^3$ is said to be 
{\em in general position with respect to two given vectorial lines} 
if the cone is not tangent to the plane determined by these lines and none of 
these two lines is a cone generatrix. 
\medskip

\noindent
{\bf Six Generic Positions}. There are six generic relative positions of a quadratic cone
(possibly imaginary) with respect to two given vectorial lines $\ell_1$, $\ell_2$ and to 
the plane plane $\Pi_{\ell_1\ell_2}$ determined by those lines (Fig.\ref{conos}): 
\medskip

\noindent
($a$) the interior of the cone contains $\ell_1$ but not $\ell_2$;

\noindent
($b$) the plane $\Pi_{\ell_1\ell_2}$ intersects the cone 
only at the origin; 

\noindent
($c$) the interior of the cone contains $\ell_2$ but not $\ell_1$;

\noindent
($d$) the cone is imaginary, but its vertex being real; 

\noindent
($e$) the interior intersects the plane $\Pi_{\ell_1\ell_2}$ 
but does not contain $\ell_1$ nor $\ell_2$;

\noindent
($f$) both $\ell_1$ and $\ell_2$ lie inside the cone.
\medskip

\begin{figure}[ht]
\centerline{\psfig{figure=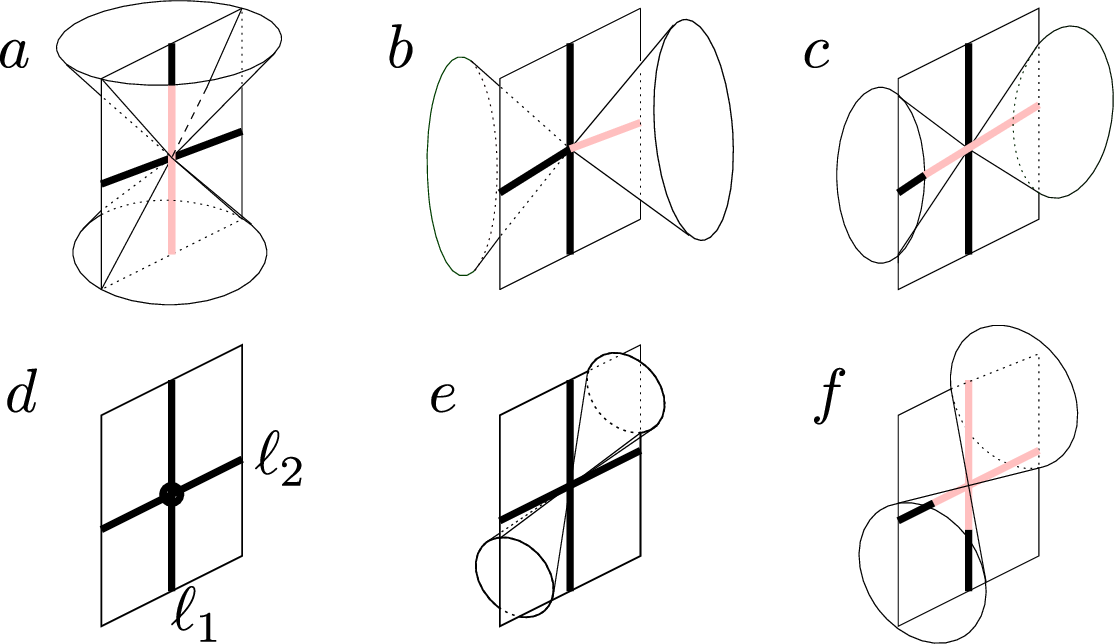,height=5cm}}
\caption{\small Generic relative positions of a quadratic cone with respect to two vectorial lines.}
\label{conos}
\end{figure}

\noindent
\textit{\textbf{Singular Point in General Position}}. 
A Morse conic singular point of a surface $\V\subset M^3$ is {\em in general position
with respect to two transverse Legendrian fibrations} if the quadratic cone determined by that
surface is in general position with respect to the tangent lines to the two fibres at that point. 

\begin{remark}
Let $\pi$ be the standard Legendre fibration of $J^1(\R,\R)$ and 
$\pi^\vee$ its standard dual fibration.  
If $\ell_1$ and $\ell_2$ are the tangents to the $\pi$- and $\pi^\vee$-fibres of $J^1(\R,\R)$ at 
a Morse conic point $Q$ of $\V^F$, then the above six relative positions correspond to the 
transitions $a$ to $f$ in Fig.\,\ref{inflections} (Ths.\ref{th:6bifurcations-BIDE} and \ref{6-bifurcations}).
\end{remark}

\begin{example}\label{quadratic-cone}
If $F(x,y,p)=0$ is an IDE with a Morse singularity at the point
$Q=(0,0,0)$, noted $\bar{0}$, then the equation of the quadratic cone $K$ defined by $F$ at $\bar{0}$ is 
$$F_{xx}(\bar{0})x^2+F_{yy}(\bar{0})y^2+F_{pp}(\bar{0})p^2+
2F_{xy}(\bar{0})xy+2F_{xp}(\bar{0})xp+2F_{yp}(\bar{0})yp=0.$$
At $\bar{0}$ the $\pi$-fibre is the $p$-axis and the $\pi^\vee$-fibre 
is the $x$-axis. Therefore, $\bar{0}$ \textit{is a critical point of $F$ in general position
with respect to $\pi$ and $\pi^\vee$, iff  }
$$(F_{xp}^2-F_{xx}F_{pp})(\bar{0})\neq 0,\quad F_{pp}(\bar{0})\neq 0\quad
\mathrm{and}\quad F_{xx}(\bar{0})\neq 0.$$
\end{example}

In classical geometry one proves that, given a straight line $\ell$ through the vertex of a non-degenerate
(possibly imaginary) quadratic cone $K\subset\R^3$ of equation $P_2(x,y,z)=0$,
the quadrics of equation $P_2(x,y,z)=\e$, for any $\e\in\R$ (including $K$), 
have the same \textit{diametral plane conjugate to $\ell$}: 
\label{diametral-construction}
\medskip

\noindent
\textit{\textbf{Conjugate Diametral Plane}}. 
The \textit{diametral plane of $K$ conjugate to $\ell$} is defined as follows: 
a plane $\Pi$ containing $\ell$ intersects $K$ along two lines $\ell_2$, $\ell_3$ 
(possibly imaginary). Take the line $\ell_4=\ell_4(\Pi)$ that 
together with $\ell$ separate harmonically the pair $\ell_2$, $\ell_3$  
(i.e., the cross ratio $(\ell_2, \ell_3; \ell, \ell_4)$ equals $-1$). 
\textit{The union of the fourth harmonic lines $\ell_4(\Pi)$, taken over 
all planes containing $\ell$ is a plane}. It is called \textit{diametral plane} of $K$ 
conjugate to $\ell$. 
\medskip


\noindent
\textit{\textbf{Conjugate Diameters}}. 
Two lines through the vertex of a non-degenerate quadratic cone $K$ 
are said to be {\em conjugate diameters} of $K$, if one of them is 
contained in the conjugate diametral plane of the other.
\smallskip


\noindent
\textbf{\textit{Harmonicity}}. \label{example:conjugate-diameters}
We conclude that \textit{two lines $\ell$, $\ell'$ are conjugate diameters of $K$ iff their cross-ratio with the lines 
$\ell_1$, $\ell_2$ (real or complex) on which $K$ intersects the plane $\Pi_{\ell\ell'}$, determined by  $\ell$, $\ell'$, 
equals $-1$}, that is $(\ell,\ell',\ell_1,\ell_2)=-1$. 
\smallskip

Let $\mu:\R^3\rightarrow \R^2$ be a fibration of parallel lines 
and write $\ell$ for the $\mu$-fibre through the origin.
The basic properties of conjugated diameters imply: 

\begin{proposition}\label{conjugate-plane}
Let $K\subset\R^3$ be a non-degenerate quadratic cone of equation $P_2(x,y,z)=0$. 
If the $\mu$-fibre $\ell$ does not lie on $K$, then the union of the $\mu$-contours of the quadrics
$P_2(x,y,z)=\e$, with $\e\in\R$, is the diametral plane conjugate to $\ell$ of any of these 
quadrics (including $K$).
\end{proposition}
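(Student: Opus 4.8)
The plan is to compute the $\rho$-contour of each quadric $\{P_2=\e\}$ directly and to observe that it is cut out by a single linear equation that does not depend on $\e$, and then to recognise that equation as the one defining the conjugate diametral plane. First I would fix notation: write $P_2(\q)=\langle A\q,\q\rangle$, where $A$ is the (invertible, since $K$ is non-degenerate) symmetric matrix of the quadratic form, and let $\vec v$ be a direction vector of the parallel fibres of $\rho$, so that $\ell=\{t\vec v\}$. A point $\q$ of the quadric $\{P_2=\e\}$ lies in the $\rho$-contour exactly when the fibre direction $\vec v$ is tangent to the quadric at $\q$, that is, when $\vec v$ lies in the kernel of $d\rho$ restricted to the surface; equivalently, $\vec v$ is orthogonal to the gradient $\nabla P_2(\q)=2A\q$.

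Then I would carry out the one-line computation: using the symmetry of $A$, the contour condition $\langle \nabla P_2(\q),\vec v\rangle=0$ becomes $\langle A\vec v,\q\rangle=0$. This is a single linear equation in $\q$; since $A$ is invertible and $\vec v\neq 0$, we have $A\vec v\neq 0$, so it defines a genuine plane $\Pi_0$ through the origin, and crucially this plane is independent of $\e$. Hence the $\rho$-contour of $\{P_2=\e\}$ is precisely $\Pi_0\cap\{P_2=\e\}$. Taking the union over $\e\in\R$ gives $\Pi_0$ itself: every $\q\in\Pi_0$ lies on the level set $\{P_2=P_2(\q)\}$ and satisfies the contour equation, hence belongs to the contour of that particular quadric, while conversely every contour point satisfies $\langle A\vec v,\q\rangle=0$ and so lies in $\Pi_0$.

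It remains to identify $\Pi_0$ with the diametral plane of $K$ conjugate to $\ell$. Here I would use the classical midpoint-of-chords description: intersecting the line $\q_0+t\vec v$ with $\{P_2=\e\}$ yields the quadratic $t^2\langle A\vec v,\vec v\rangle+2t\langle A\vec v,\q_0\rangle+(P_2(\q_0)-\e)=0$ in $t$, whose two roots sum to zero --- so that $\q_0$ is the midpoint of the chord --- precisely when $\langle A\vec v,\q_0\rangle=0$, i.e.\ when $\q_0\in\Pi_0$. Thus $\Pi_0$ is the locus of midpoints of chords parallel to $\ell$, which is exactly the conjugate diametral plane, and it is the same plane for every $\e$ (including $K$). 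The hypothesis $\ell\not\subset K$, i.e.\ $\langle A\vec v,\vec v\rangle\neq 0$, guarantees that the chord genuinely has two separated intersection points and that the conjugate plane is not tangent to $K$, keeping us out of the degenerate situation described at the end of the Diameters Remark.

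The only point I expect to require some care --- the main, though minor, obstacle --- is reconciling the two descriptions of ``conjugate diameter'': the projective harmonic (cross-ratio) construction stated in the Diameters Remark versus the affine midpoint-of-chords definition used in the computation. Both ultimately reduce to the symmetric bilinear relation $\langle A\vec v,\q\rangle=0$, so the identification is routine; but this is the step where one must connect the contact-geometric contour computation to the classical terminology appearing in the statement.
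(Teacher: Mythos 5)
Your proof is correct, but it is genuinely more explicit than what the paper does: the paper offers no computation at all for this proposition. It sets up the harmonic (cross-ratio) definition of the conjugate diametral plane in the Diameters Remark, invokes the classical fact that all the quadrics $P_2=\e$ share one diametral plane conjugate to $\ell$, and then states Proposition~\ref{conjugate-plane} as an immediate consequence of ``the basic properties of conjugated diameters'', adding only the remark that the $\rho$-contour of each quadric is its intersection with that plane. Your argument replaces this appeal to classical theory by a self-contained linear-algebra proof: writing $P_2(\q)=\langle A\q,\q\rangle$, the contour condition $\langle\nabla P_2(\q),\vec{v}\rangle=0$ linearises, by symmetry of $A$, to the single $\e$-independent equation $\langle A\vec{v},\q\rangle=0$, which simultaneously shows that the union of the contours is a plane $\Pi_0$, that this plane is the same for every $\e$, and that the algebraic core of the whole statement is polarity with respect to $A$. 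The step you flag as delicate --- reconciling the paper's harmonic definition with your midpoint-of-chords characterisation --- is indeed routine and worth recording: in any plane $\Pi$ containing $\ell$, the restriction of $P_2$ is a binary quadratic form whose zero lines are $\ell_2,\ell_3$, and the harmonic condition $(\ell_2,\ell_3;\ell,\ell_4)=-1$ is equivalent to the vanishing of the polarised form on the two direction vectors, i.e.\ to $\langle A\vec{v},\vec{w}\rangle=0$; hence the fourth harmonic lines sweep out exactly your $\Pi_0$, and your hypothesis $\langle A\vec{v},\vec{v}\rangle\neq 0$ is precisely the condition $\ell\not\subset K$ that keeps $\ell$ out of $\Pi_0$. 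What your route buys is a verifiable, coordinate-level proof that also yields the stronger pointwise statement (each contour equals quadric~$\cap\,\Pi_0$, which the paper relegates to an ``in fact'' remark). What the paper's route buys is brevity and a purely projective formulation of conjugacy, which is what it actually needs later, in Lemma~\ref{tangent-plane-legendrian}, where the statement is transported to arbitrary Legendrian fibrations by a contactomorphism and only the projective invariance of the conjugacy relation is used.
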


In fact, {\em the $\mu$-contour of a quadric $P_2(x,y,z)=\e$ is the 
intersection of that quadric with the diametral plane of $K$ conjugate to $\ell$}.

\begin{proposition}\label{tangent-plane}
Let  $F:\R^3\ra \R$ be a function whose level surface 
$F=0$ has a conic Morse singularity at $\bar 0$. 
If the $\mu$-fibre $\ell$ at $\bar 0$ does not lie on the quadratic cone $K$ defined 
by $F$, then the surface formed by the $\mu$-contours 
of the level surfaces $F=\e$ is smooth at $\bar 0$, its tangent plane at $\bar 0$ is the 
diametral plane of $K$ conjugate to $\ell$ and this plane does not contain $\ell$.
\end{proposition}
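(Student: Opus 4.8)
The plan is to turn the ``union of $\rho$-contours'' into a single level set and then read everything off from its linear part. Write $\vec v$ for the common direction of the parallel $\rho$-fibres, so that $\ell=\R\vec v$, and set $g:=\nabla F\cdot\vec v$, the derivative of $F$ along the fibres. First I would observe that a smooth point $q$ of a level surface $\{F=\e\}$ lies on the $\rho$-contour exactly when the fibre direction $\vec v$ is tangent to $\{F=\e\}$ at $q$, i.e. when $dF_q(\vec v)=g(q)=0$. Since each point lies on a unique level set, the union over all $\e$ of the $\rho$-contours of the surfaces $\{F=\e\}$ is precisely the set $G:=\{g=0\}$. This identification is the heart of the argument: it replaces a family of contours by one explicit equation.

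Next I would analyse $G$ near $\bar 0$. Because $\bar 0$ is a critical point of $F$ we have $dF_{\bar 0}=0$, hence $g(\bar 0)=0$ and $\bar 0\in G$. Let $H$ denote the Hessian matrix of $F$ at $\bar 0$, so that the quadratic part of $F$ is $P_2(x)=\tfrac12\,x^{T}Hx$ and the cone of Example\,\ref{quadratic-cone} is $K=\{P_2=0\}$. Differentiating $g=\sum_i F_i\,v_i$ gives $\nabla g(\bar 0)=H\vec v$, that is, $dg_{\bar 0}$ is the linear form $w\mapsto \vec v^{\,T}Hw$. The Morse hypothesis means $H$ is nondegenerate, so $H\vec v\neq 0$ and $dg_{\bar 0}\neq 0$; by the implicit function theorem $G$ is a smooth surface near $\bar 0$, with tangent plane $T_{\bar 0}G=\ker dg_{\bar 0}=\{w:\vec v^{\,T}Hw=0\}$.

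It then remains to identify this plane. For the purely quadratic function $P_2$ one computes $\nabla(\nabla P_2\cdot\vec v)=H\vec v$ as well, so the union of the $\rho$-contours of the quadrics $\{P_2=\e\}$ is exactly the plane $\{\vec v^{\,T}Hx=0\}$; by Proposition\,\ref{conjugate-plane} this plane is the diametral plane of $K$ conjugate to $\ell$. Since $T_{\bar 0}G$ coincides with it, the tangent plane of $G$ at $\bar 0$ is the conjugate diametral plane, as claimed. (Equivalently, one may read off $\{w:\vec v^{\,T}Hw=0\}$ as the polar hyperplane of $\vec v$ and invoke the Diameters Remark of p.\,\pageref{diametral-construction} directly.)

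I expect the one point requiring care to be the bookkeeping at the singular level. For $\e\neq 0$ the level surfaces are smooth near $\bar 0$ and the contour description $g=0$ is immediate, but the level $\{F=0\}$ is the singular cone and $\bar 0$ is its vertex; I would note that, by the Morse condition, $\bar 0$ is the only critical point of $F$ nearby, so it is the unique point to which this care applies and it is recovered as the common limit point of the contours. Thus $G$ is genuinely the closure of the union of contours, and nothing larger. Finally, the hypothesis $\ell\not\subset K$, i.e. $P_2(\vec v)=\tfrac12\,\vec v^{\,T}H\vec v\neq 0$, is exactly the non-tangency condition of Proposition\,\ref{conjugate-plane}; it guarantees that $\vec v$ is transverse to the conjugate diametral plane $T_{\bar 0}G$, keeping $\ell$ transverse to the contour surface at $\bar 0$.
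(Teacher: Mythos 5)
Your proof is correct and takes essentially the same route as the paper's: both arguments reduce the tangent plane at $\bar 0$ to the quadratic part of $F$ and identify it as the conjugate diametral plane via Proposition~\ref{conjugate-plane}. The paper compresses this into a one-line perturbation claim (the contours form a ``deformation by higher order terms'' of the diametral plane), while you make the underlying mechanism explicit --- realising the closure of the union of $\rho$-contours as the single level set $\{\nabla F\cdot\vec v=0\}$, applying the implicit function theorem, and computing $\nabla(\nabla F\cdot\vec v)(\bar 0)=H\vec v$ --- which is a careful filling-in of detail rather than a different idea.
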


\begin{proof}
By Proposition\,\ref{conjugate-plane}, near $\bar 0$, the 
$\mu$-contours of the surfaces $F=\e$ form a surface which is a deformation (by 
higher order terms) of the diametral plane of $K$ conjugate to $\ell$.  
The tangent plane of this surface at $\bar 0$ is the diametral plane of $K$ conjugate to $\ell$  
beacause it depends only on the quadratic terms of $F$.  
The plane does not contain $\ell$ because a diametral plane of $K$ conjugate to $\ell$ contains $\ell$ 
iff that plane is tangent to $K$ along $\ell$. 
\end{proof}

\section{Local Transitions of the Curve of Inflections}\label{proofs}

We separate the local transitions of the curve of inflections and discriminant of BIDEs in Fig.\,\ref{inflections} 
(and those of the flecnodal and parabolic curves of surfaces in Fig.\,\ref{14perestroikas}) into classes 
of distinct natures\,: \\ 
{\bf I}. Transitions involving characteristic points: $UW$, flec-folded singularity and $\g v$ for BIDE 
($A_4$ and flec-godron for surfaces in $3$-space); \\
{\bf II}. Transitions involving Morse conic points of $\V^F$: $a$ to $f$ for BIDE ($A_3$ for surfaces
in $3$-space); \\
{\bf III}. Transitions involving Whitney pleats (lips, bec-à-bec, swallowtail);\\
{\bf IV}. Transitions involving multi-singularities: creation/annihilation of hyperbolic (elliptic) nodes,  
flec hyperbolic node for BIDE (creation/annihilation of hyperbonodes (or ellipnodes) and flec-hyperbonode for surfaces in $3$-space); 

\noindent
For surfaces in $3$-space there is an additional type: $D_4$ transitions. 

\smallskip
Along this section we consider a contact $3$-manifold $M$ and we take two fixed transverse 
Legendre fibrations $\rho:M\ra B$ and $\rho^\vee:M\ra B^\vee$. 

\subsection{Transitions Involving Characteristic Points}\label{UW-gv}

\noindent
{\bf Theorem \ref{proofs}.0}. {\em Let $Q$ be a characteristic point of a smooth surface $\V\subset M$ such 
that the $\rho$-fibre at $Q$ has exactly $2$-point contact with $\V$ and 
\smallskip  

\noindent
a) the $\rho$-contour $\widehat{D}$ of $\V$ is not tangent to the
$\rho^\vee$-contour $\widehat{I}$ of $\V$; 

\noindent
b) the $\rho^\vee$-fibre is not tangent to the $\rho^\vee$-contour $\widehat{I}$ of $\V$;

\noindent
c) the $\rho$-fibre is not tangent to the $\rho^\vee$-contour $\widehat{I}$ of $\V$. 


\noindent
Then the discriminant $\,D=\rho(\hat{D})\,$ and the curve of inflections $I=\rho(\,\hat{I}\,)\,$
are quadratically tangent at $q=\rho(Q)$. This point $q=\rho(Q)$ locally separates
$I$ into its left and right branches $\,I_\ell$, $\,I_r\,$} (Fig.\ref{contour}).

\begin{proof}
The derivative of $\rho_{|_\V}:\V\ra B$ at $Q$ sends tangent lines of $\hat{D}$ and
$\hat{I}$ to the image line of the tangent plane of $\V$ at $Q$. So $I$ and $D$ are tangent. 
  
The condition of $2$-point contact of the $\rho$-fibre at $Q$ with $\V$
implies that there are local smooth coordinates $(x,y,z)$ in $M$ (near $Q$) and
$(x,y)$ in $B$ (near $q=\rho(Q)$) such that $\V$ is given by the equation $y=z^2$;
the $\rho$-contour $\hat{D}$ is the $x$-axis and the map $\rho$ is given by $(x,y,z)\mapsto (x,y)$. 

In the sequel, the notation ``$\mathrm{h.o.t.}(x)$'' means ``higher order terms in $x$''. 

By condition $(b)$, the contour $\hat{I}$ may be parametrised by the $x$-coordinate  
\begin{equation}\label{pre-inflections}
x\mapsto (x,(\a x+\b x^2 + \mathrm{h.o.t.}(x))^2 ,\a x+\b x^2 +\mathrm{h.o.t.}(x)).
\end{equation}
Its image $I=\rho(\hat{I})$ is the curve $(x,\a^2x^2+2\a \b x^3+\b^2x^4+ \mathrm{h.o.t.}(x))$.
Condition $(a)$ implies $\a\neq 0$, proving the quadratic tangency with $D$ (the $x$-axis).
 
\begin{figure}[ht]
\centerline{{\psfig{figure=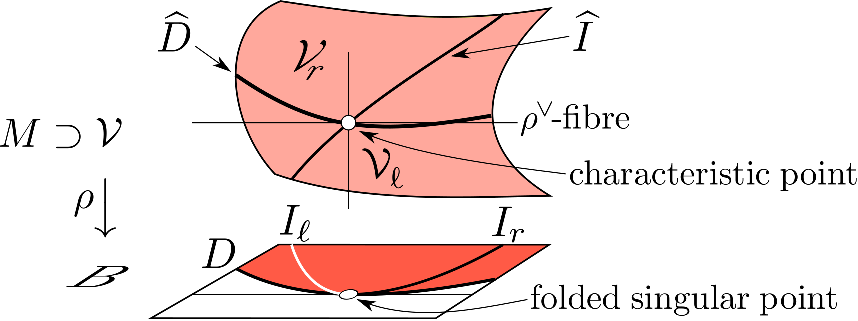,height=3.3cm}}}
\caption{\small Projection of two transverse contours at a characteristic point.}
\label{contour}
\end{figure}

Since $\hat{D}$ and $\hat{I}$ intersect each other transversely at $Q$, the contour 
$\hat{D}$ locally separates $\hat{I}$ at $Q$ (Fig.\,\ref{contour}).
That is, $\hat{I}$ has one branch on the left component of $\V$ and the other branch on the right
one. This implies that $q=\rho(Q)$ locally separates the left and right branches of the curve of
inflections.
\end{proof}

In generic $1$-parameter families of IDE, one of the conditions $a)$, $b)$ or $c)$ can fail at isolated parameter 
values at which occur the following respective transitions:

\begin{theorem}\label{special-tangent}
Let $\V_\e\subset M$ be a generic $1$-parameter family of smooth surfaces. 
If the surface $\V=\V_0$ has a characteristic point in which one of the conditions a, b or c of 
Th.\,{\rm \ref{proofs}.0} is broken, then the curve of inflections of $\V_\e$ undergoes the 
following respective transitions at $\e=0$ {\rm (Fig.\,\ref{inflections}):} 
\smallskip

\noindent 
{\bf a)} $UW$$:$ $(\e=0)$ the point $q=\rho(Q)$ is a folded singular point at which the discriminant $D=\rho(\hat{D})$ 
has $4$-point contact with $I=\rho(\,\hat{I}\,)$. It disappears $(\e<0)$ or splits into
two folded singular points of opposite indices $(\e>0)$.
\smallskip

\noindent 
{\bf b)} flec-folded singularity$:$ a negative folded singularity overlaps $($at $\e=0$$)$ with a biflecnode
which passes from one branch of the curve of inflections to the other (as $\e$ changes sign).
At $\e=0$ the curve of inflections $\rho(\,\hat{I}\,)$ has itself an
inflection at $q=\rho(Q)$.
\smallskip

\noindent 
{\bf c)} $\g v$$:$ for $\e\in(-1,1)$ the surface of the $3$-space $B\times (-1,1)$ formed by
the curves of inflections of $\V_\e$ is locally diffeomorphic to
the Whitney umbrella. For $\e=0$ 
the curve $\rho(\,\hat{I}\,)$ has a semi-cubic cusp at the folded 
singular point $q=\rho(Q)$. For any sufficiently small $|\e|$ 
the folded singular point is generic. 
\end{theorem}

\begin{proof}
\noindent
{\bf Case a}. The curve $\hat{I}$ can be parametrised by \eqref{pre-inflections}, with 
$\a=0$ and $\b\neq 0$. So its image $I=\rho(\,\hat{I}\,)$ is the curve 
$(x, \b^2 x^4 + \mathrm{h.o.t.}(x))$, that is, $I$ has $4$-point contact with 
$D=\rho(\hat{D})$ (the $x$-axis). 

A small generic deformation of the function $\b x^2 +\mathrm{h.o.t.}(x)$ is given (up to a 
`translation in the $x$-coordinate') by $\e+\bar\b x^2 +\mathrm{h.o.t.}(x)$, with $\bar\b\neq 0$. 
The corresponding deformation of the contour $\hat{I}$, $x\mapsto (x, (\b x^2 + \mathrm{h.o.t.}(x))^2 ,\b x^2 +\mathrm{h.o.t.}(x))$, 
is given by $x\mapsto (x, (\e+\bar\b x^2 + \mathrm{h.o.t.}(x))^2 ,\e+\bar\b x^2 +\mathrm{h.o.t.}(x))$. So the
family of curves of inflections is equivalent to the standard $UW$ family of curves  
$x\mapsto (x, (\e+\bar\b x^2)^2)$.  

Suppose $\bar\b<0$, i.e. $\b<0$ (the case $\bar\b>0$ is equivalent).  
For small $\e<0$, the function $\e+\bar\b x^2 + \mathrm{h.o.t.}(x)$ has no zeros near the origin.  
Thus the curves $\hat{D}_\e$ and $\hat{I}_\e$ 
have no point of intersection (see Fig.\,\ref{duble-par} left). 

\begin{figure}[ht]
\centerline{\psfig{figure=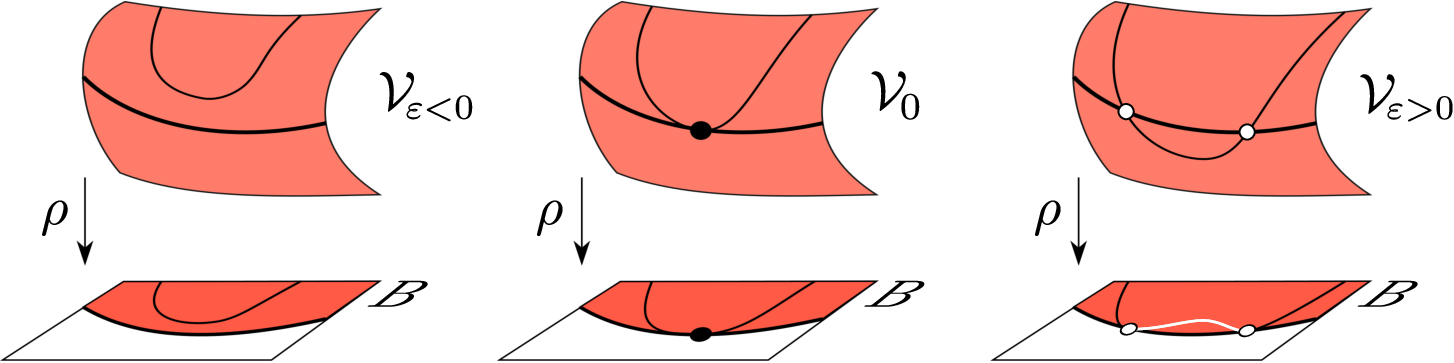,height=3cm}}
\caption{\small The $UW$-transition.}
\label{duble-par}
\end{figure}

For small $\e>0$ the function $\e+\bar\b x^2 + \mathrm{h.o.t.}(x)$ has two zeros 
near the origin. Thus the curves  $\hat{D}_\e$ and $\hat{I}_\e$ 
intersect at two neighbouring folded singular points (see Fig.\,\ref{duble-par} right). 
\medskip

\noindent
{\bf Case b}. \ 
By Whitney Pleat Remark and Proposition\,\ref{kernel-tangent} (p.\pageref{kernel-tangent}), 
condition $(b)$ implies that the map $\rho^\vee_{|_\V}:\V\ra B^\vee$
has a Whitney pleat at $Q$. 
The $\rho$-contour $\hat{D}_\e$ of $\V_\e$ locally separates $\V_\e$ into its left and right
components and, hence, it also separates $\hat{I}_\e$ into its left and right branches. 
Since the characteristic points and the Whitney pleats are both stable, also 
the surfaces $\V_{\e \neq 0}$ have a characteristic point and have a Whitney pleat. 
For a generic family $\V_\e$ the Whitney pleat crosses transversely the $\rho$-contour, 
passing from one branch of $\hat{I}_\e$ 
to the other at $\e=0$.  
Theorem\,\ref{special-tangent}.$b$ follows from this fact. 
\medskip

\noindent
{\bf Case c}. \ 
By condition $(c)$, the $\rho^\vee$-contour $\hat{I}$ can be parametrised 
by the $z$-coordinate as $(\b z^2+\g_0 z^3+ \mathrm{h.o.t.}(z), z^2, z)$. Its image under $\rho$ 
is the curve 
$$(\b z^2 + \g_0 z^3 + \mathrm{h.o.t.}(z), z^2),$$
which is sent by the affine transformation $(x,y)\mapsto (x-\b y, y)$ to the 
semi-cubic cusp $(\g_0 z^3+\mathrm{h.o.t.}(z), z^2)$, $\g_0\neq 0$ (the condition  
$\g_0 = 0$ is of codimension $2$ and would give rise to a ramphoid cusp 
$(\d z^4+\mathrm{h.o.t.}(z), z^2)$, $\d\neq 0$). 

\begin{figure}[ht]
\centerline{\psfig{figure=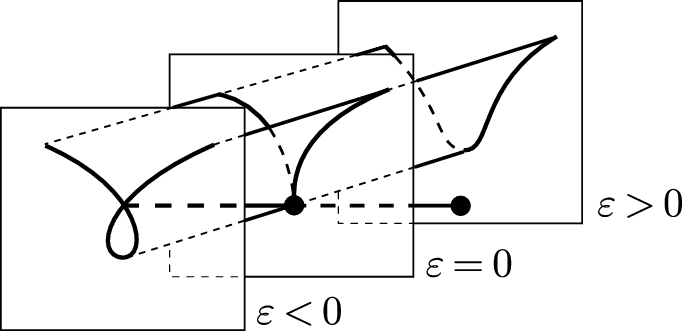,height=2.2cm} \ \ \psfig{figure=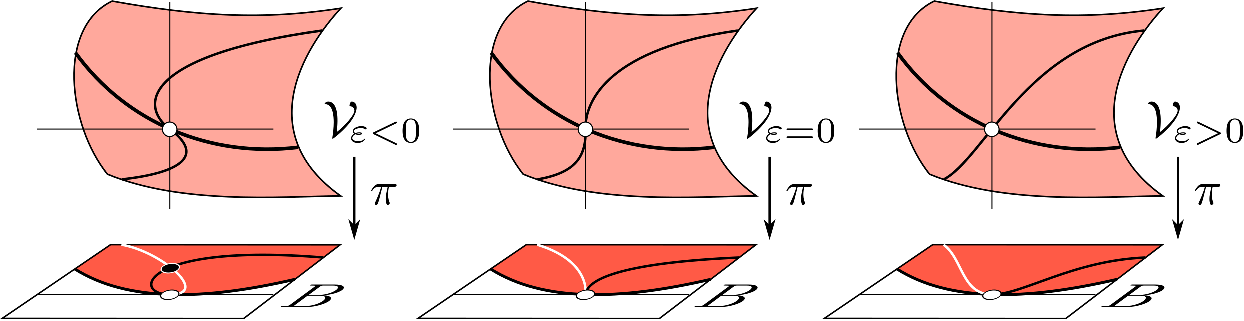,height=2.2cm}}
\caption{\small The $\g v$-transition in ``surface-time'' $B\times(-1,1)$: a Whitney umbrella.}
\label{umbrellasections}
\end{figure}

A deformation of $\V=\V_0$ inside a generic $1$-parameter family $\V_\e\subset M$ 
induces a deformation of the curve of inflections $z \mapsto (\g_0 z^3+\mathrm{h.o.t.}(z), z^2)$,
which is given (up to a `translation in the 
first coordinate' and an affine transformation $(x,y)\mapsto (x-\b y, y)$) 
by the parametrisation 
$z\mapsto (\e z+\g z^3+\mathrm{h.o.t.}(z), z^2)$, where $\g=\g(\e)$ and $\g(0)=\g_0$. 

As the parameter $\e$ changes, the tangent line 
to the $\rho^\vee$-contour of $\V_\e$ at the characteristic point passes 
through the `vertical' position (at $\e=0$) with non zero angular velocity. 


The surface in plane-time $(x,y,\e)$ formed by the union of the curves 
of this family is the image of the map $(z,\e)\mapsto (\e z+\g z^3+\mathrm{h.o.t.}(\e,z), z^2, \e)$.
This map is a perturbation with higher order terms of the (Whitney) map 
$(z,\e)\mapsto (\e z+\g_0 z^3, z^2, \e)$, which is stable (Appendix\,\ref{pinch-point}). 
For fixed $\e<0$, the parametrised curve $z\mapsto (\e z+\g_0 z^3, z^2)$ has a double
point which is the image of the values $z=\pm \sqrt{-\e/\g_0}$, like 
in Fig.\,\ref{umbrellasections}. 
\end{proof}

\subsection{Transitions Involving Quadratic Conic Points}\label{sect:morse}


\begin{theorem}\label{6-bifurcations}
Let $\{\V^{F_\e}\subset M\}_{\e \in\R}$ be a generic $1$-parameter family of 
surfaces given by functions $F_\e:M\ra \R$. 
If $F=F_0$ has a Morse conic point $Q$ in general position with respect to $\rho$ and $\rho^\vee$,
and the tangents to the two fibres at $Q$ are not conjugate diameters of the cone $K$ 
defined by $\V$, then both curves $I$ and $D$ undergo a Morse transition -- 
we get one of the transitions $a$ to $f$ in Fig.\,\ref{inflections}, in accordance with 
the position of the cone $K$, $a$ to $f$ in Fig.\,\ref{conos}. 
Moreover, two folded singularities of equal indices are born or die {\rm (Fig.\,\ref{inflections})}. 
\end{theorem}

In Fig.\,\ref{rica}, 
we show a deformation of the cone (in the cases $a$ and $d$) 
and the projections to the $xy$-plane of its contours by $\pi$ and by $\pi^\vee$.
See also Fig.\,\ref{inflections}.

\begin{figure}[ht]
\centerline{\psfig{figure=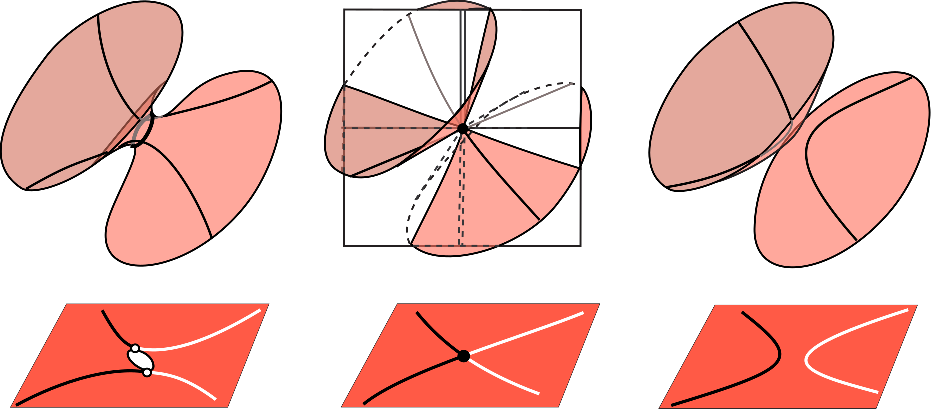,height=3cm} \ \psfig{figure=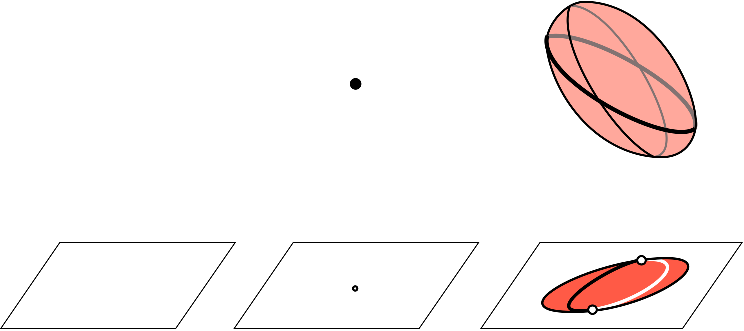,height=3cm}}
\caption{\small Deformation of a cone in $J^1(\R,\R)$ and the projections by $\pi$ 
of the contours with respect to $\pi$ and $\pi^\vee$. Cases $a$ and $d$. 
(In case $d$ the cone is imaginary.)}
\label{rica}
\end{figure}



\subsection*{Proof of Theorem\,\ref{6-bifurcations}}
Consider a Legendrian fibration $\mu:M\ra B$ of our contact $3$-manifold $M$.  

\begin{lemma}\label{tangent-plane-legendrian}
Let  $F:M\rightarrow \R$ be a function whose level surface 
$F=0$ has a Morse conic singular point $Q$. 
If the tangent line $\ell$ of the $\mu$-fibre at $Q$ does not lie 
on the quadratic cone $K\subset T_QM$ defined by $F$, 
then 

\noindent
$(i)$ at $Q$, the surface swiped by the $\mu$-contours of the 
level surfaces $F=\e$ {\rm (that we note $\Sigma^\mu_F$)} is smooth,  
its tangent plane is the diametral plane of $K$ conjugate to $\ell$ and 
this plane does not contain $\ell$; 

\noindent
$(ii)$ near $Q$, the foliation of the surface $\Sigma^\mu_F$, which is naturally defined by the $\mu$-contours 
of the surfaces $F=\e$, is diffeomorphic to the foliation of the plane by the level curves of a 
function near a Morse critical point. 
\end{lemma}

\begin{proof}
$(i)$ It follows from Proposition\,\ref{tangent-plane}: use a local diffeomorphism sending $Q$ to $\bar{0}\in\R^3$ 
and rectifying the fibres of $\mu$ to parallel lines - its differential preserves the conjugacy relations. 
\medskip

\noindent
$(ii)$ By item $(i)$, we can take local coordinates $x,y,z$ near $Q$ 
such that the surface  $\Sigma^\mu_F$ is locally given by the equation $z=0$ and 
the surface $\V^{F_\e}$ has equation $\e=ax^2+by^2+cz^2+dxy+exz+gyz+\psi_{\geq3}(x,y,z)$, 
where the quadratic cone $K$, of equation $ax^2+by^2+cz^2+dxy+exz+gyz=0$, is non degenerate and not 
tangent to the surface $z=0$. 

So, in these coordinates, the foliation of the surface $\Sigma^\mu_F$ is given by the level curves 
$\e=ax^2+by^2+dxy+\psi_{\geq3}(x,y,0)$. Lemma\,\ref{tangent-plane-legendrian} is proved. 
\end{proof}

Let us come back to our fixed Legendrian fibrations $\rho:M\ra B$ and $\rho^\vee:M\ra B^\vee$.
The following lemma is the simplest case of Theorem~\ref{6-bifurcations}. 

\begin{lemma}
Let  $F:M\ra \R$ be a function whose level surface $F=0$ has a Morse conic point $Q$ 
in general position with respect to $\rho$ and $\rho^\vee$. If the tangents to the $\rho$ and 
$\rho^\vee$-fibres at $Q$ are not conjugate diameters of the cone $K$, 
then both curves $D_\e$, $I_\e$ undergo a Morse transition at $\e=0$$:$ 
We get one of the transitions $a$ to $f$ in Fig.\,\ref{inflections} 
according to the position of $K$ in Fig.\,\ref{conos}. 
\end{lemma}

\begin{proof}
We apply Lemma\,\ref{tangent-plane-legendrian} for our fixed fibrations $\mu=\rho$ and $\mu=\rho^\vee$. 
\end{proof}

To prove Theorem\,\ref{6-bifurcations}, we need the following notion\,:
\medskip

\noindent
\textit{\textbf{Flec-Surface}}. 
The \textit{flec-surface of a function} $F:M\ra \R$ is the union of the $\rho^\vee$-contours $\hat{I}_\e$ 
of the level surfaces $F=\e$. 
More generally, the {\em flec-surface $\mathcal{F}\ell(\V_\e)$ of a $1$-parameter family of surfaces} $\V_\e\subset M$ is
the surface swiped by the $\rho^\vee$-contours $\hat{I}_\e$ of the surfaces $\V_\e$. 

\begin{remark*}
Thus the $\rho^\vee$-contour $\hat{I}_\e$ of the surface $\V_\e$ is the intersection of $\V_\e$ with the 
flec-surface of the function $F_\e$. The $\rho^\vee$-contours $\hat{I}_\e$, $\e \in \R$, define a (singular) 
foliation on the flec-surface $\mathcal{F}\ell(\V_\e)$ (by its definition).  
\end{remark*}

\noindent
{\bf Proof of Theorem~\ref{6-bifurcations}}. \ 
Write $K$ for the quadratic cone of $T_QM$ defined by the surface $\V_0$, $\mathcal{F}\ell(\V_\e)$ for the 
flec-surface of the family $\V_\e$ and $\ell$ for the tangent line of the 
$\rho^\vee$-fibre at $Q$. The flec-surface $\mathcal{F}\ell(\V_\e)$ is  
smooth at $Q$ and very close to the diametral plane conjugate to $\ell$ of $K$ 
(being a smooth deformation of this plane, see 
Lemma\,\ref{tangent-plane-legendrian}). 

First, we have to prove that the tangent plane of $\mathcal{F}\ell(\V_\e)$ at $Q$ is still the 
diametral plane of $K$ conjugate to $\ell$. 
The flec-surface $\mathcal{F}\ell(\V_\e)$, being the union of the $\rho^\vee$-contours 
$\hat{I}_\e$ of the surfaces $\V_\e$ contains the curve $\hat{I}_0$ (singular at $Q$) which is the 
intersection of the surface $\V_0$ (having a conic Morse singularity at $Q$) 
with the (smooth) flec-surface of the function $F:=F_0$ (see Lemma~\ref{tangent-plane-legendrian}). 
So {\em the curve $\hat{I}_0$ has a Morse double 
point at $Q$}. If the branches of $\hat{I}_0$ are real, then their tangent 
lines determine the tangent plane of the surface $\mathcal{F}\ell(\V_\e)$, which is the tangent 
plane of the flec-surface of $F_0$ at $Q$ and is therefore the 
diametral plane of $K$ conjugate to $\ell$. If the branches of $\hat{I}_0$ are 
complex conjugate, then  (locally) $\hat{I}_0$ is just the point $Q$ but 
the tangent plane of the flec-surface $\mathcal{F}\ell(\V_\e)$ at $Q$ is still the tangent 
plane of the flec-surface of the function $F_0$, that is, the 
diametral plane of $K$ conjugate to $\ell$.

The fact that the tangent lines of the fibres of $\rho$ and $\rho^\vee$ at $Q$ are not conjugate 
diameters of $K$ iff the tangent plane to $\mathcal{F}\ell(\V_\e)$ at $Q$ \textit{is not vertical} 
(i.e., it does not contain the $\rho$-direction) is stated below in Proposition\,\ref{prop:vertical-flecsurface} 
for $M=J^1(\R,\R)$, $\rho=\pi$ and $\rho^\vee=\pi^\vee$. This fact holds for arbitrary $M$ 
because all Legendre fibrations are locally contactomorphic. 
Thus, the projection $\rho_{|_{\mathcal{F}\ell(\V_\e)}}: \mathcal{F}\ell(\V_\e)\ra B$  of the flec-surface
$\mathcal{F}\ell(\V_\e)$ at $Q$ is a local diffeomorphism. The local transition of the curve of 
inflections is thus described by the singular foliation of $\mathcal{F}\ell(\V_\e)$ in the neighbourhood 
of $Q$. Since the curve $\hat{I}_0$ has a Morse singularity at $Q$, the curve of 
inflections undergoes a Morse transition. 

The change (by $2$ or $-2$) of the Euler characteristic of the surface $\V_\e$ at the transition,
implies that the number of folded singular points changes locally by 2 and also provides the
indices of those folded singular points.  \fin


\subsection{Transitions Involving Whitney Pleats}
Let $\V_\e$ be a generic $1$-parameter family of smooth surfaces 
of $M$, with $\V=\V_0$. 

\begin{theorem}\label{3-4-5}
If the map $\rho^\vee_{|_\V}: \V \rightarrow B^\vee$, has a singularity of type $3$, $4$ or $5$ 
of Fig.\,\ref{sing-gener-proj}, then when the parameter $\e$ passes through $0$, the curve of inflections of 
the $\rho$-solutions of $\V_\e$ undergoes a lips-, bec-\`a-bec-  or 
swallowtail-transition, respectively, described in Figs.~\ref{inflections} 
and \ref{14perestroikas}. 
\end{theorem}


\begin{proof}
Theorem~\ref{3-4-5} follows from the known fact (c.f. \cite{avg}) that under a generic $1$-parameter 
deformation of the singularities of types 3, 4 and 5 (in Fig.\,\ref{sing-gener-proj}), their respective 
contours undergo a transition in which two Whitney pleats are born or die (see Fig.\,\ref{apparent}). 

A generic $1$-parameter deformation of the surfaces 
$z=x^3+ xy^2$ and $z=x^3-xy^2$ whose projections along the $x$-axis have the 
singularities of the types 3 and 4, is given by the families
$z=x^3+ xy^2+\e x$ and $z=x^3-xy^2+\e x$, respectively. For each $\e$, 
the intersection of the plane $y=y_0=\mathrm{const}.$ with the 
curve of singular points of the projection of the surface along 
the $x$-axis consists of the points of the graph of the function 
$f|_{y=y_0}(x)= x^3\pm xy_0^2+\e x$ at which the tangent line 
is horizontal. Thus, the projection of the curve of 
singular points to the $(x,y)$-plane consists of the points at which 
the partial derivative with respect to $x$ vanishes, $f_x(x,y;\e)=0$. 
One gets the family of conics $3x^2\pm y^2+\e=0$ which, at $\e=0$, has a Morse transition 
(lips or bec-\`a-bec in Fig.~\ref{apparent}). The born or dying bi-inflections in these transitions 
correspond to the born or dying Whitney pleats of the $\rho^\vee$-contour of $\V$. 

Similarly, for the type $5$ singularity, we have the surfaces $z=x^4+xy+\e x^2$ and we 
obtain the curve $4x^3+y+2\e x=0$. For $\e>0$ this curve has two points whose tangent 
is parallel to the $x$-axis (corresponding to two Whitney pleats), at $\e=0$ 
these points collapse and for $\e<0$ they disappear. 
\end{proof}

\subsection{Transitions Involving Multisingularities} 

The bec-à-bec transition of the curve of inflections of the fronts of $\V$
comes from a Morse transition at an unstable double point of the $\rho^\vee$-contour of $\V$. 

The stable self-intersection points of the curve of inflections are multisingularities: 
if a generic surface $\V$ has two parts $\V_\ell$ and $\V_r$, their $\rho^\vee$-contours $\hat{I}_\ell$ and $\hat{I}_r$
are smooth curves whose images $I_\ell=\rho(\hat{I}_\ell)$ and $I_r=\rho(\hat{I}_r)$ are smooth curves of $B$ 
which may have points of transverse intersection (hyperbolic nodes), which are stable multisingularities. 
For a BIDE they are also hyperbolic nodes and 
for a smooth surface they are hyperbonodes (Fig.\,\ref{points}). 

\begin{theorem}\label{multi-sing} 
Consider a generic $1$-parameter family of surfaces $\V_\e\subset M$. 
\medskip

\noindent
$(i)$ If, for $\V_0$, the curves $I_\ell$, $I_r$ are quadratically tangent at $q\in B$, then 
we get a creation/annihilation transition of two hyperbolic nodes 
{\rm (c/a-h$\cdot$n in Fig.\,\ref{inflections})}.
\medskip

\noindent 
$(ii)$ If $q\in B$ is a hyperbolic node for $\V_0$ and $q$ is also a left bi-inflection, then as $\e$ passes 
through $0$ a left bi-inflection moves along $I_\ell$, crossing $I_r$ at the hyperbolic node $q$ 
{\rm (flec-hn in Fig.\,\ref{inflections})}. {\rm (Left and right may be interchanged.)}
\end{theorem}

\noindent
{\em Proof}. \ 
We have two parts $\V_\ell$, $\V_r$ (of $\V$) whose $\rho^\vee$-contours $\,\hat{I}_\ell,$\, $\hat{I}_r\,$ 
project under $\rho$ to $I_\ell$ and $I_r$. 
Item (i) follows from the following fact:
a generic deformation of $\V_\ell$ and $\V_r$ will not keep the tangency of their corresponding 
curves of inflections $I_\ell$ and $I_r$, giving rise to a creation/annihilation transition.

For the proof of Item (ii), first, we can fix $\V_1$ in such a way that the 
projection $\rho^\vee|_\V:\V \rightarrow C$ has a Whitney pleat at a point of $\V_1$, 
that is, there is a bi-inflection point lying in $I_1=\rho(\hat{I}_1)$. 
Next, one obtains the transition 14 of Fig.~\ref{inflections} 
by moving $\V_2$ in a generic way: the curve $I_2=\rho(\hat{I}_2)$ 
passes trough the bi-inflection point in $I_1$. 
The stability of the transition (ii) follows from the stability 
of the Whitney pleat singularity. 
\fin

\part{Transitions on Evolving Surfaces}

\section{Basic Properties of Smooth Surfaces in $3$-Space}\label{classification}

\subsection{Classification of Points by the Contact with Lines}\label{Tangential-Classification}

The points of a generic smooth surface in a real 3-space (projective, affine or Euclidean)
are classified by the contact of the surface with its tangent lines .
\begin{figure}[ht]
\centerline{\psfig{figure=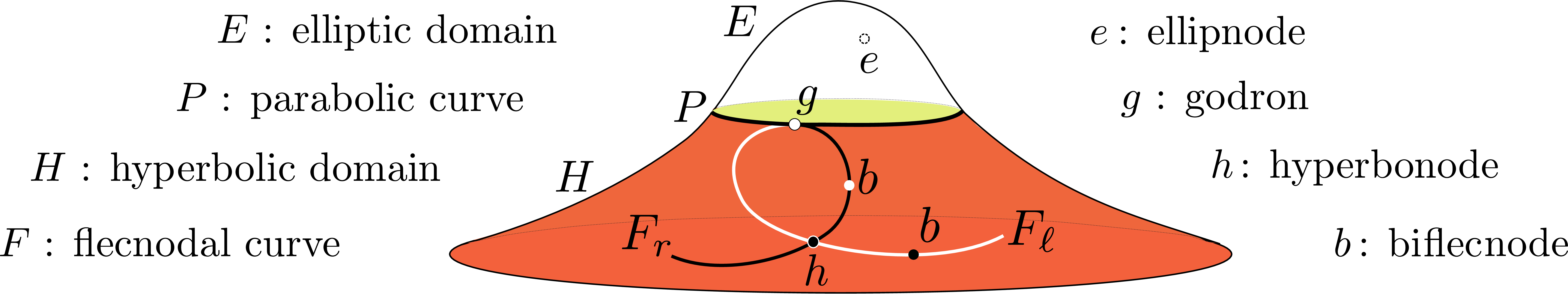,height=2.4cm}}
\caption{\small The 8 tangential singularities of a generic smooth surface.}
\label{points}
\end{figure}

A generic smooth surface $S$ has three (possibly empty) parts\,:  \\
({\bf E}) an open {\em domain of elliptic points}\,: no real tangent line exceeds
2-point contact with $S$; \ 
({\bf H}) an open {\em domain of hyperbolic points}\,: there are 
two such lines, called {\em asymptotic lines} (they determine two {\em asymptotic directions}); and  
({\bf P}) a smooth {\em curve of parabolic points}\,: a unique (double) asymptotic line. 

The {\em parabolic curve} separates the elliptic and hyperbolic domains of $S$.

In the closure of the hyperbolic domain there is 
({\bf F}) a smooth immersed {\em flecnodal curve} formed by the points at which an asymptotic 
tangent line exceeds 3-point contact with $S$.  
 
There are also four types of isolated points\,: 
({\bf g}) a {\em godron} is a parabolic point at which the (unique) asymptotic line
is tangent to the parabolic curve; \ 
({\bf h}) a {\em hyperbonode} is a point of transverse self-intersection of the flecnodal curve; \
({\bf b}) a {\em biflecnode} is a point of the flecnodal curve at which one asymptotic tangent has 
$5$-point contact with $S$;
({\bf e}) an {\em ellipnode} is a real point in the elliptic domain of the simplest self-intersection
of the complex conjugate flecnodal curves associated to the complex conjugate asymptotic lines.
\smallskip

\label{inflections_bi-inflections}
\noindent
\textit{\textbf{Inflections of Plane and Space Curves}}. 
For a smooth curve in $3$-space or in the plane an {\em inflection} (resp. \textit{bi-inflection})  
is a point at which the first two (resp. first three) derivatives are colinear. 
Equivalently, the curve has $3$-point (resp. $4$-point) contact with its tangent line at that point. 
If a Euclidean structure is admitted, the curvature satisfies $k=0$ (resp. $k=k'=0$). 
\smallskip

{\em A generic smooth curve of $\RP^3$ {\rm (or $\R^3$)} has no inflection}.
However, the asymptotic curves of a generic smooth surface may have inflections;
being `stable' under small perturbations of the surface. 
Moreover, if an asymptotic curve has an inflection, then 
the neighbouring asymptotic curves have also a neighbouring inflection.
In \S\ref{subsect:flecnodal-inflections} (Proposition\,\ref{prop:flacnodal-space-inflections}), we prove that 
\smallskip

\noindent
{\em The curve formed by the inflections of the asymptotic curves of a smooth surface 
coincides with the flecnodal curve of that surface}.
\medskip

\noindent
{\bf Hyperbonodes and ellipnodes}. 
If the surface is locally presented in Monge form 
$$z=f_2(x,y)+f_3(x,y)+\ldots,$$
where $f_j$ is a homogeneous polynomial of degree $j$, \textit{the point is a hyperbonode or an ellipnode
iff the quadratic form $f_2$ divides the cubic form $f_3$, that is, the cubic form is zero modulo
the quadratic form} \cite{Kazarian-Uribe}. 

\begin{example*}
A smooth surface in $\RP^3$ 
is a one-sheet hyperboloid (ellipsoid) iff all its points are hyperbonodes (resp. ellipnodes). 
\end{example*}

{\footnotesize
\begin{remark*}
Seven of the above tangential singularities were well known in the 19th Century (\cite{Salmon}), but 
their normal forms up to the 5-jet, under projective transformations,
were given in \cite{Landis,Platonova} and ellipnodes were found by Panov (\cite{Dima}). 
The numbers of ellipnodes, hyperbonodes and godrons on a surface are 
related to the Euler characteristic of that surface by formulas which 
determine the possible coexistences of these points on the surface \cite{Kazarian-Uribe}.
A projective invariant and an index for hyperbonodes were introduced in \cite{Uribeinvariant}. 
\end{remark*}}

\subsection{Some Relevant Properties of Smooth Surfaces}\label{subsect:relevant-properties}

An {\em asymptotic curve} is an integral curve of a field of asymptotic directions.
\medskip

\noindent
\textit{\textbf{Left-Right}}.
Fix an orientation in $\RP^3$ (or $\R^3$). A regular smooth curve is said to be 
\textit{left} (\textit{right}) at the points where its first three derivatives form a negative (resp. positive)
frame. {\em At a hyperbolic point of a generic smooth surface one asymptotic curve is left and the other
is right} (cf. \cite{Uribegodron})\,: one of them twists like a left screw and the other like a right screw. 
Their respective asymptotic tangent lines are called {\em left} and {\em right asymptotic tangents}. 

\begin{remark*} 
At an inflection of an asymptotic curve the above frame is not defined. But, since 
the osculating plane to an asymptotic curve of a generic surface $S$ at a non inflection $q$ is 
the tangent plane to $S$ at $q$; 
for an arbitrary surface $S$ we say that \textit{an asymptotic curve $\g$ is left} (\textit{right}) 
at $q\in\g$ if the tangent plane to $S$ along $\g$ twists negatively (resp. positively) at $q$: 
if $T:=\g'$ and $N\in T_pS$ is a vector orthogonal to $T$, the frame 
$(T,N,N')$ is negative (resp. positive) at $q$. This enables, for example, to distinguish the two families 
of straight lines that foliate a one-sheet hyperboloid (its asymptotic curves). 
\end{remark*}

\noindent
\textit{\textbf{Left and Right Flecnodal Curve}}. 
The {\em left} ({\em right}) {\em flecnodal curve} $F_\ell$ (resp. $F_r$) of a surface 
$S$ consists of the points of the flecnodal curve of $S$ at which the over-osculating tangent 
line is a left (resp. right) asymptotic line.  

\begin{remark*}
An observer ignoring this distinction will be unable to understand
why the points of transverse self intersection of $F$ are sometimes stable and sometimes unstable, and 
why the transitions of $F$ in Fig.\,\ref{fig:two stable transitions} are both stable. 
\begin{figure}[ht]
\centerline{\psfig{figure=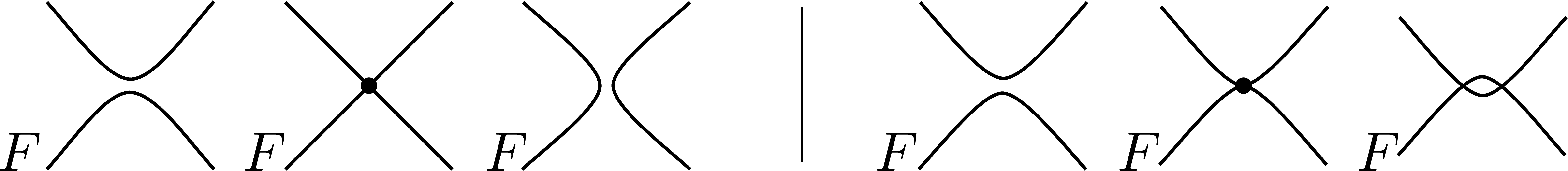,height=1.1cm}}
\caption{\small Two stable transitions of the flecnodal curve.}
\label{fig:two stable transitions}
\end{figure}
\end{remark*}

Presenting $S$ in Monge form and using the description of the asymptotic curves in terms of IDEs 
(\S\ref{flecnodal-IDE}), Corollary\,\ref{cor:bi-inflecrion} provides the 
\medskip 

\noindent
{\bf Biflecnode's Characterisation}. 
{\em A point of a smooth surface $S$ in $\RP^3$ {\rm (or $\R^3$)} is a left biflecnode of $S$
iff at that point the left flecnodal curve 
is tangent to the left asymptotic direction} (the same holds for the right biflecnodes). 
\medskip 

\noindent
{\bf Godron-Swallowtail}.
A godron is also defined in terms of singularities of the contact with the tangent plane\, 
-- materialised as singularities of the dual surface. An ordinary godron corresponds to a
swallowtail point of the dual surface (an $A_3$ Legendre singularity - cf.\,\cite{avg}).
A double godron is unstable and corresponds to a transition where 
two swallowtails are born or die (an $A_4$ Legendre singularity).
A godron is {\em simple} if it corresponds to a swallowtail of the dual surface.
{\em All godrons of a surface in general position are simple.} 
\smallskip

\noindent
\textit{\textbf{Index of a Godron}}.  
A simple godron is said to be {\em positive} or {\em of index $+1$} 
(resp.~{\em negative} or {\em of index $-1$}) if, at the neighbouring parabolic 
points, the half-asymptotic lines directed to the hyperbolic domain point 
towards (resp.~away from) the godron (Fig.\,\ref{fig:index of a godron}).
\begin{figure}[ht]
\centerline{\psfig{figure=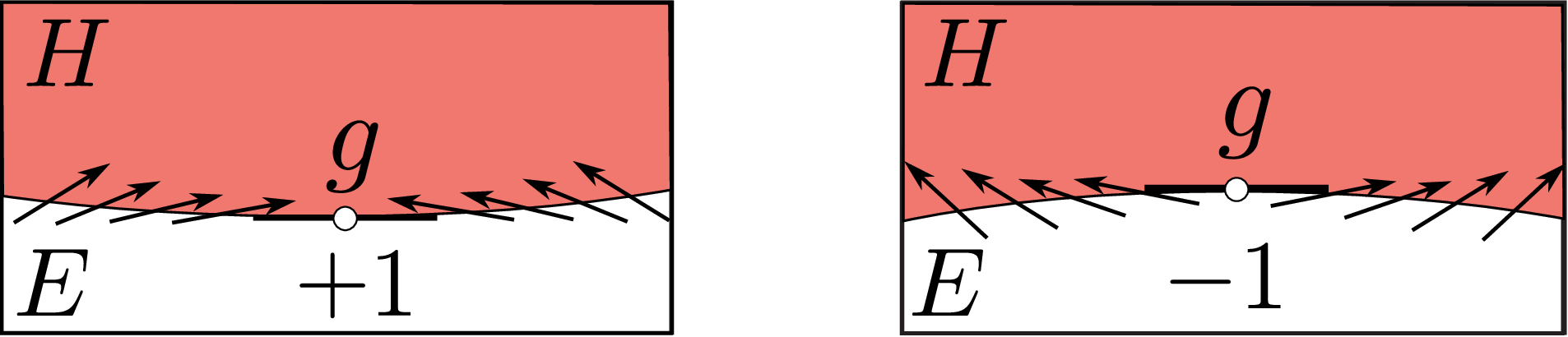,height=1.3cm}}
\caption{\small The index of a godron.}
\label{fig:index of a godron}
\end{figure}

Five characterisations and several geometric properties of positive and negative godrons 
and swallowtails are described in \cite{Uribegodron}.
 \label{index-page}

\begin{cremark}
At each elliptic point, the surface locally lies in one of the two half-spaces 
determined by the tangent plane. This half-space, 
called {\em positive}, determines a {\em natural co-orientation} 
on each connected component of the elliptic domain. By continuity we get a 
positive half-space on the smooth part of the parabolic curve \cite{Uribegodron}. 

Let $q$ be a parabolic point of a generic smooth surface $S$. 
Take an affine coordinate system $x,y,z$ at $q$ such that the $xy$-plane is 
tangent to $S$ and the $x$-axis is tangent to the parabolic curve at $q$.
Direct the positive $z$-axis to the positive half-space at $q$ and the positive $y$-axis
to the hyperbolic domain.
Finally, direct the positive $x$-axis in such way that any basis $(e_x,e_y,e_z)$ of 
$x,y,z$ forms a positive frame for the chosen orientation of $\R^3$ (or of $\RP^3$).
\smallskip

The vector $\,e_x\,$ determines a\, {\em  natural orientation} \,of the parabolic curve. 
\end{cremark}

Of course, the natural orientation fits with the local
behaviour of the parabolic curve at the $A_3$ and $D_4$ transitions (Fig.\,\ref{fig:orient-parabolic-curve})
\begin{figure}[ht]
\centerline{\psfig{figure=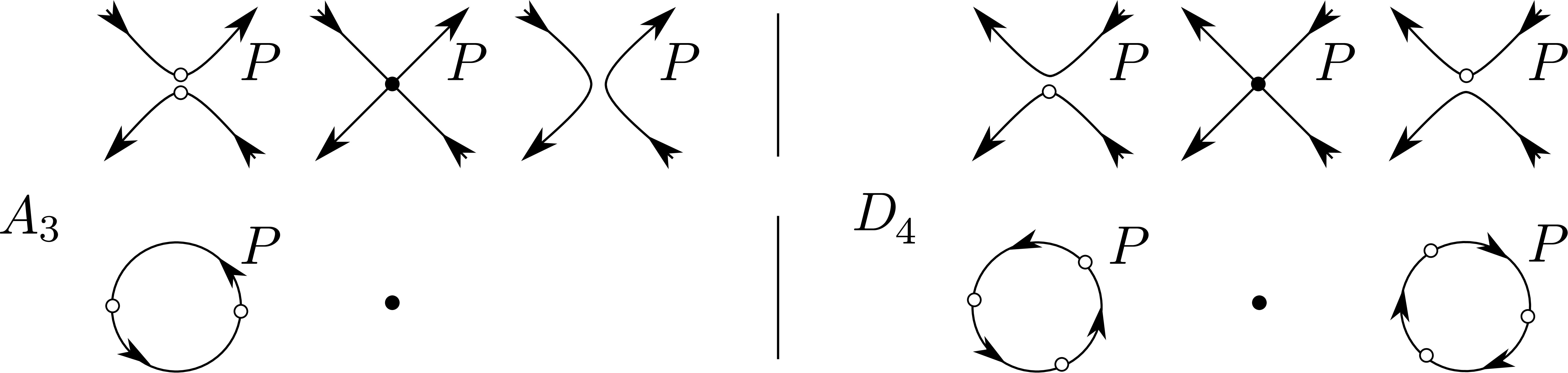,height=2.75cm}}
\caption{\small Natural orientation of the parabolic curve.}
\label{fig:orient-parabolic-curve}
\end{figure}


\section{Transitions of flecnodal and parabolic curves}\label{section:bifurcation-theorems}
\subsection{The Five Main Theorems on Evolving Surfaces}\label{Main-theorems-on-surfaces}

{\footnotesize
Theorems \ref{th:bigodron&flecgodron} to \ref{theorem:flat-umbilics} describe the local transitions 
of the flecnodal and parabolic curves and tangential singularities occurring in generic evolving surfaces.} 
\medskip

\noindent
\textbf{Monge form}. We 
present the germs of surfaces in $\R^3$ at the origin in Monge form \,$z=f(x,y)$\, with\,$f(0,0)=0$\, and \,$df(0,0)=0$, 
expressing the partial derivatives of $f$ with numerical subscripts\,:
\[f_{ij}(x,y):=\frac{\D^{i+j}f}{\D x^i\D y^j}(x,y) \quad \text{and} \quad f_{ij}:=f_{ij}(0,0)\,.\]
Suppose $S_\e$ is a generic $1$-parameter family of smooth surfaces in $\RP^3$ (or $\R^3$) depending 
smoothly on the parameter $\e$. It will be given in Monge form $z=f^\e(x,y)$.
Below, all conditions for $f=f^0$ are given at $(x,y)=(0,0)$. 

\begin{theorem}\label{th:bigodron&flecgodron}
Suppose $S_0$ has a godron $g$ and $P$ is smooth at $g$. 

\noindent
a) If $P$ and $F$ have $4$-point contact at $g$, then we get a bigodron transition ($A_4$, equivalent to $UW$)\,: 
two godrons of opposite index are born or die.
\smallskip

\noindent
b) If $S_0$ has $5$-point contact with the asymptotic tangent at $g$, then $g$ has index $-1$
and we get a flec-godron transition\,: a biflecnode, which passes from one local branch of the flecnodal 
curve to the other, overlaps with $g$ at $\e=0$. 
\end{theorem}

\begin{note}
To get a godron we need  $f_{20}=f_{11}=f_{30}=0$ with  $f_{02}f_{21}f_{40}\neq 0$. \\
$a)$ the equality $3f^2_{21}-f_{02}f_{40}=0$ guarantees a double godron and the condition
$\,9f_{21}f_{31}-4f_{12}f_{40}-f_{02}f_{50}\neq 0$ guarantees that it is not triple. \\
$b)$ conditions $f_{40}=0$ and $f_{50}\neq 0$ guarantee the godron is also a biflecnode.  
\end{note}

\begin{example*}
For an ordinary godron Platonova's normal form of the $4$-jet of the surface is \,$f(x,y)=y^2/2-x^2y+\rho x^4/2$\,
with $\rho\neq 1, 0$.

If $\rho=1$ (i.e. $3f^2_{21}-f_{02}f_{40}=0$) this normal form provides 
an infinitely degenerate godron\,: $f(x,y)=\frac{1}{2}(y-x^2)^2$. To get just 
a bigodron it suffices to add a multiple of $x^3y$, by our second condition. The bigodron and 
flec-godron transitions, in generic $1$-parameter families, were studied in \cite{Uribegodron}.
\end{example*}

The $\g v$-transition of Fig\,\ref{inflections}, which depicts item $c$ of Theorems\,\ref{special-tangent} 
and \ref{th:bifurcation-withPsmooth}, is absent from Fig.\,\ref{14perestroikas}. 
This absence is explained by the following result: 
\medskip

\noindent
\textbf{No-Cusp Theorem}. 
{\em The flecnodal curve of a smooth surface in $\RP^3$} (or in $\R^3$)
{\em has never a cusp at a godron at which the parabolic curve is smooth}.
\medskip

This fact imposes topological restrictions on the possible configurations of the flecnodal
curve of a smooth surface. For example, 
\medskip

\noindent
{\bf Theorem} \cite{Uribegodron}. 
{\em Inside a hyperbolic disc bounded by a closed parabolic curve there is 
an odd number of hyperbonodes} (at least one).

\begin{theorem}\label{A_3-bifurcations}
If $S_0$ has a point $q$ at which $P$ has a Morse singularity 
with a unique asymptotic direction in general position 
{\rm (transverse to the branches of $P$)}, then the flecnodal curve has an $A_3$ transition at $q$. 
We have four cases: 
\smallskip

\noindent
$a)$ $q$ is an isolated parabolic point inside a hyperbolic region. An elliptic island is born or disappears. 

\noindent
$b)$ and $c)$ $q$ is a crossing of two branches of $P$ at which the asymptotic line is pointing, 
respectively, to the hyperbolic sectors and to the elliptic sectors. Two locally disjoint hyperbolic 
(elliptic) regions merge, while an elliptic (hyperbolic) region is separated into two locally disjoint regions. 

\noindent
$d)$ $q$ is an isolated parabolic point inside an elliptic region.  A hyperbolic island is born or disappears. 

In the four cases, two godrons are born or die, and in ``surface-time'' $3$-space, $\{S_\e\times\{\e\}\}$, 
the flecnodal curves of the surfaces $S_\e$ form a Whitney umbrella whose self-intersection line consists of hyperbonodes 
and whose handle consists of ellipnodes. 
The section $\e=0$ is tangent to the umbrella at its pinch-point, giving an $A_3$ curve singularity 
{\rm (see Appendix\,\ref{pinch-point} and Fig.\,\ref{umbrella})}.   
\end{theorem}

\begin{note}\label{note:conic-points}
To take the unique asymptotic tangent as $x$-axis imposes $f_{20}=0$. 
From eq. of $P$, $\Delta(x,y):=(f_{20}f_{02}-f_{11}^2)(x,y)=0$, at $(0,0)$ we get $f_{11}=0$ and $f_{02}\neq 0$.  
The condition that $(0,0)$ is a critical point of $\Delta(x,y)$,  
\smallskip

\centerline{$f_{20}f_{12}+f_{30}f_{02}-2f_{11}f_{21}=0$ \ and \  $f_{20}f_{03}+f_{21}f_{02}-2f_{11}f_{12}=0$,} 
\smallskip

\noindent
implies $f_{30}=f_{21}=0$. It is of Morse type if 
$f_{40}(f_{22}f_{02}-2f_{12}^2)-f_{02}f_{31}^2\neq 0$. 

Finally, the $x$-axis is transverse to both branches of $P$ if $f_{40}\neq 0$.  
\end{note}

\begin{figure}[ht]
\centerline{\psfig{figure=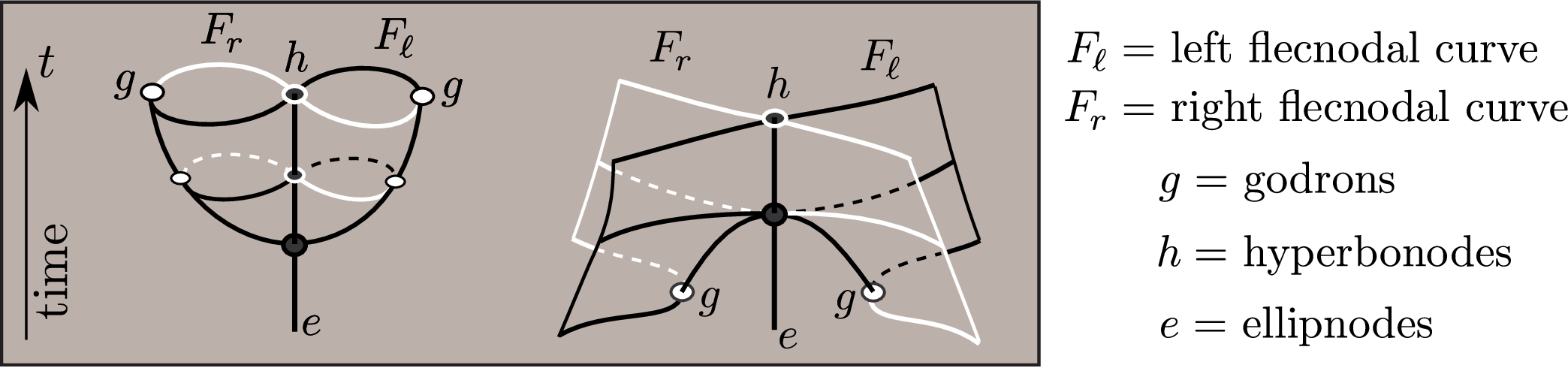,height=2.5cm}}
\caption{\small Whitney umbrellas formed by the flecnodal curve in ``surface-time'' $3$-space.}
\label{umbrella}
\end{figure}

\begin{theorem}\label{theorem:lips_bec-a-bec}
$a)$ If $S_0$ has a point $q$ at which $F_\ell$ has a Morse singularity 
and the left asymptotic line is transverse to both branches of $F_\ell$ {\rm (if they are real)}, 
then $q$ is a degenerate left biflecnode. We get a lips transition if $F_\ell$ is an isolated point, 
and a bec-à-bec transition if $F_\ell$ has a local crossing {\rm (Fig.\,\ref{14perestroikas})}. 
In both cases two biflecnodes are born or die. {\rm (The same holds for $F_r$.)}
\smallskip

\noindent 
$b)$ If $q\in S_0$ is a left triflecnode {\rm ($6$-point contact with the asymptotic tangent)} and 
$F_\ell$ is smooth at $q$, then we get a ``swallowtail'' transition - two left   
biflecnodes are born or die, collapsing like the maximum and minimum values of the functions 
$f_\e(x)=x^3+\e x$, at $\e=0$ - {\rm Fig.\,\ref{14perestroikas}}. 
{\rm (The same holds for $F_r$.)}
\end{theorem}

\begin{note}\label{nota3}
The asymptotic lines are coordinate axes iff $f_{20}=f_{02}=0$, $f_{11}\neq 0$. \\
a) The condition $f_{50}\neq 0$ guarantees a simple biflecnode. 

\noindent
Write $A:=f_{50}f_{11}$, $\,B:=f_{41}f_{11}-2f_{31}f_{21}$, 
$\,C:=3f_{31}f_{12}-\frac{9}{2}f_{22}f_{21}+f_{32}f_{11}$. 
The equality $3f^2_{21}-2f_{31}f_{11}=0$ implies the flecnodal curve of $S=S_0$ has a singularity 
at the origin, which is of Morse type if $AC-B^2\neq 0$.  \\
b) The condition $\,3f^2_{21}-2f_{31}f_{11}\neq 0$ guarantees the flecnodal curve is smooth at the origin; 
and the conditions $f_{30}=f_{40}=f_{50}=0$ with $f_{60}\neq 0$ guarantee the origin is a triflecnode
but not a quadriflecnode. 
\end{note}

\begin{example*}
  By Theorem\,\ref{theorem:lips_bec-a-bec}\,$a$, the flecnodal curve of the surface given by 
\begin{equation}\label{example_Morse-sing}
f(x,y)=xy+x^5+y^3+\a\,x^2y+\a^2\,x^3y+\g\,x^3y^2\,
\end{equation}
 has a Morse singularity at the origin if $\a\neq 0$ and $\g\neq \frac{2}{5}\a^6$ because $AC-B^2=2\cdot 6!\g-24^2\a^6$. 
 But if $\g=\frac{2}{5}\a^6$, the flecnodal curve has a semi cubic cusp. 
\end{example*}


\begin{theorem}\label{th:ellipnodes-hyperbonodes}
$a)$ If $F_\ell$ and $F_r$ are tangent at $q\in S_0$, then at $\e=0$ we get a creation/annihilation transition 
of two hyperbonodes. 
\smallskip

\noindent
$b)$ If $q\in S_0$ is a hyperbolic point at which the asymptotic tangents have $4$- and $5$-point contact 
with $S_0$, then we get a ``flec-hyperbonode'' transition$:$ 
a left biflecnode moves along $F_\ell$ and, at $\e=0$, crosses $F_r$ overlapping with a hyperbonode. 
{\rm ($F_\ell$ and $F_r$ may be interchanged.)} \par
\smallskip

\noindent 
$c)$ If two complex conjugate branches of $F$ are tangent at $q\in S_0$, 
then at $\e=0$ we get a creation/annihilation transition of two ellipnodes.  
%
\end{theorem}

\begin{note}
If we take the asymptotic lines as coordinate axes, the condition
$f_{30}=f_{03}=0$ guarantees the origin is a hyperbonode. Then \\
a) the condition $4f_{40}f_{04}f_{11}-(3f^2_{21}-2f_{31}f_{11})(3f^2_{12}-2f_{13}f_{11})=0$
guarantees the tangency of $F_\ell$ and $F_r$. \\
b) the condition $(3f^2_{21}-2f_{31}f_{11})(3f^2_{12}-2f_{13}f_{11})\neq 0$ guarantees the 
transversality of $F_\ell$ and $F_r$, and $f_{40}=0$, $f_{50}\neq 0$ 
guarantee our point is a biflecnode which is not triflecnode. \\
c) At an ellipnode we can choose affine coordinates such that $f_{20}=f_{02}$ and $f_{30}=f_{03}=f_{21}=f_{12}=0$. 
In such coordinate system, the equality $(f_{40}-3f_{22})(f_{04}-3f_{22})-(f_{31}-3f_{13})(f_{13}-3f_{31})=0$ 
guarantees the tangency of two complex conjugate branches of the flecnodal curve at $q=(0,0)$.
\end{note}

\begin{example*}
The respective normal forms for hyperbonodes of Landis-Platonova 
\cite{Landis, Platonova} and Tabachnikov-Ovsienko \cite{Ovsienko-Tabachnikov} are the following:
\[f(x,y)=xy\pm x^4+\a x^3y+\b xy^3+y^4\,,\]
\[f(x,y)=xy\pm Ix^4+x^3y+xy^3+Jy^4\,.\]
Both satisfy $f_{21}=f_{12}=0$. So, to get the tangency of $F_\ell$ et $F_r$ we need to have 
$f_{40}f_{04}-f_{31}f_{13}=0$. We respectively get $\a\b=\pm 16$ and $\pm 16IJ= 1$. 
\end{example*}

\noindent 
\textit{\textbf{Flat Umbilics}}. A point $q$ at which the quadratic part of $f$ is zero and has 
nondegenerate cubic form is called {\em nondegenerate flat umbilic}.
The projective dual surface at $q$ has a $D_4$ front singularity. All tangent lines at $q$ 
are asymptotic, but either one or three of those lines have $4$-point contact with $S$ at $q$  
because the cubic form vanishes along either one line ($D_4^+$) or three lines ($D_4^-$). 
Thus we have either one or three branches of $F$ through $q$ (Fig.\,\ref{ricD4+}). 

Clearly, a vector $\vec{u}=(u_1,u_2)$ belongs to the kernel of the cubic form of $f$ 
iff $\D^3_{\vec{u}}f=0$, where $\D_{\vec{u}}:=u_1\D_x+u_2\D_y$. Thus in the $D_4^-$ case there are 
three vectors $\vec{u}$, $\vec{v}$, $\vec{w}$ satisfying $\D^3_{\vec{u}}f=\D^3_{\vec{v}}f=\D^3_{\vec{w}}f=0$. 

\begin{theorem}\label{theorem:flat-umbilics}
Suppose $S_0$ has a non degenerate flat umbilic at $q$ and no asymptotic line has 
$5$-point contact with $S_0$. Then 

\noindent   
a) If $S_0$ has three asymptotic tangents with $4$-point contact, then $S_0$ has locally three transverse 
flecnodal curves and we get a $D_4^-$ transition$:$ an elliptic disc with tree negative godrons 
on $P$ and three neighbouring hyperbonodes collapse and reappear $($the reappeared disc 
has opposite natural co-orientation.$)$

\noindent 
b) If $S_0$ has one asymptotic tangent with $4$-point contact, then we get a $D_4^+$ transition$:$ 
{\rm (say $\e<0$)} two elliptic regions of opposite co-orientation approach each other; {\rm ($\e=0$)} one branch of $F$ and 
two branches of $P$ intersect transversely at $q$; {\rm ($\e>0$)} the elliptic regions do not merge, but repulse each other. 
In the process, an ellipnode passes from one elliptic domain to the other and a negative godron passes from one parabolic curve to the other.
\end{theorem}

\begin{note}
a) Let $\vec{u}$, $\vec{v}$, $\vec{w}$ be the vectors satisfying $\D^3_{\vec{u}}f=\D^3_{\vec{v}}f=\D^3_{\vec{w}}f=0$. 
The condition $\D^4_{\vec{u}}f\,\D^4_{\vec{v}}f\,\D^4_{\vec{w}}f\neq 0$ ensures each over-osculating asymptotic
line has exactly $4$-point contact with $S_o$: this bans the local presence of biflecnodes.
The analogue genericity condition in b) is $\D^4_{\vec{u}}\neq 0$. 
\end{note}

\noindent 
{\bf Degenerate flat umbilics}.
A flat umbilic $D_4^+$ or $D_4^-$ satisfying $\D^4_{\vec{u}}f=0$ (for some vector $\vec{u}$) 
is not stable in generic $1$-parameter families of surfaces but is stable in $2$-parameter ones 
if $\D^5_{\vec{u}}f\neq 0$ - for small values of the parameter
the surfaces have a biflecnode on the flecnodal curve corresponding to the $\vec{u}$-direction. 
Similarly, a flat umbilic $D_4^-$ with $\D^4_{\vec{u}}f=\D^4_{\vec{v}}f=0$ is not stable in generic $2$-parameter
families of surfaces but is stable in $3$-parameter ones - two flecnodal curves
of the surfaces have one biflecnode. 

\begin{example*}
The $D_4^-$ flat umbilic point of the surface given in Monge form by
\begin{equation}\label{normal-formP1f}
f(x,y)=x^3-xy^2+\a x^3y+\b y^4+\psi_{\geq 5}(x,y)
\end{equation}
is not stable in generic $1$-parameter families of smooth surfaces if  
$\D_y^4f=0$, $(\D_x+\D_y)^4f= 0$ or $(\D_x-\D_y)^4f=0$, that is, if 
$\b=0$, $\b=-\a$ or $\b=\a$.

So the normal form\,\eqref{normal-formP1f} in Table 2 of \cite{Toru} should be accompanied 
by the restrictions $\b\neq 0$, $\b\neq-\a$ and $\b\neq\a$. 
Similarly, one has to impose $\b\neq 0$ to the $D_4^+$ normal form of \cite{Toru}.
\end{example*}

\subsection{Further Results and Comments}\label{further results}
\textbf{Extension to Surfaces in other Spaces}. 
Since the tangential singularities depend only on the contact of the surface 
with its tangent lines, all our results hold for surfaces in the $3$-sphere 
$\sph^3$; in Lobachevsky $3$-space $\Lambda^3$; in de Sitter world, etc.
Namely, $\sph^3$ is the double covering of $\RP^3$; the lines of the Klein model of $\Lambda^3$ are 
lines of the affine space $\R^3$; etc. 

\subsubsection{First Degenerate ``Conic'' Points of the surface $a^f=0$}\label{sect:degenerate-conic-points}     
Consider a surface $z=f(x,y)$ such that the function $a^f$ has a critical
point at $\bar 0$ with $a^f(\bar 0)=0$. Assume that $f_{40}\neq 0$ and $f_{02}=1$, and write 
{\small\[\Delta=3f_{40}\left[2f_{12}(f_{12}f_{03}f_{40}^2+3f_{12}^2f_{40}f_{31}+f_{31}^3-f_{40}^2f_{13})+
f_{31}(f_{31}f_{41}-f_{40}f_{32})\right]+f_{31}^3f_{50}+f_{40}^3f_{23}.\]}

\noindent
\textbf{Theorem\,\ref{sect:degenerate-conic-points}}.\, 
\textit{The parabolic curve $P$ and the flecnodal curve $F$ of the surface $z=f(x,y)$ have the singularities 
shown in Table\,\ref{tb:degenerate-conic-points} at $(0,0)$}: 

\begin{table}[h!]
\centering
\begin{tabular}{|ccc|l|c|}\hline
$P$ & $F$ & picture & conditions & codim\\ \hline \hline
$A_1$ & $A_3$ & $\raisebox{-.4\height}{\includegraphics[scale=0.28]{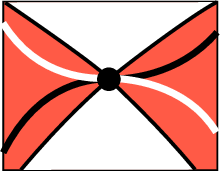}}\,$ & $f_{40}(f_{22}-2f_{12}^2)-f_{31}^2\neq 0$. & $3$ \\ \hline 
$A_2$& $A_6$  & $\raisebox{-.4\height}{\includegraphics[scale=0.28]{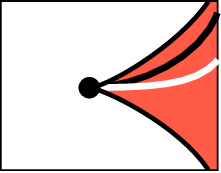}}\,$   & $f_{40}(f_{22}-2f_{12}^2)-f_{31}^2=0$, \, $\Delta\neq 0$. & $4$ \\ \hline 
$A_3$ & $A_9$  & $\raisebox{-.4\height}{\includegraphics[scale=0.28]{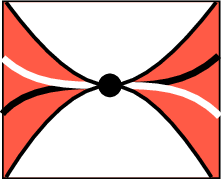}}\,$ & $f_{40}(f_{22}-2f_{12}^2)-f_{31}^2=0$, \, $\Delta=0$. & $5$ \\ \hline 
  \end{tabular}.
\caption{\small A Morse conic singularity and the simplest non Morse conic singularities.}
\label{tb:degenerate-conic-points}
\end{table}

\begin{proof}[On the proof]
The first case is part of Theorem\,\ref{A_3-bifurcations}. 
In the second case, we make a suitable linear change of coordinates so that the 
relevant terms of the Newton diagram of the Hessian polynomial are 
$H_f(x,y)=x^2+\Delta y^3+\ldots$, giving an $A_2$ singularity for the parabolic curve - 
the $A_6$ singularity of the flecnodal curve is obtained by a calculation and using 
the Newton diagram of the polynomial $I^f(x,y)$. 
The condition $\Delta=0$, of third case, clearly provides an $A_3$ 
singularity for the parabolic curve, and the singularity $A_9$ of the flecnodal 
curve is obtained similarly as we did in the second case. 
\end{proof}

\begin{remark*}
In table\,\ref{tb:degenerate-conic-points}, we made one picture for each case, but the $1$st 
and $3$rd cases have four real realisations and the $2$nd case has two real realisations. 
\end{remark*}

\begin{example*}
The surface whose local Monge form is 
\[f(x,y)=y^2/2\pm x^4+\a x^3y+(2\b^2\pm \frac{3}{8}\a^2)x^2y^2+\b xy^2\,,\]
satisfies $f_{40}(f_{22}f_{02}-2f_{12}^2)-f_{02}f_{31}^2=0$ (see Note\,\ref{note:conic-points}). So
if $\a\b(\a^2\pm 8\b^2)\neq 0$ (i.e. $\Delta\neq 0$) the parabolic and flecnodal 
curves of the surface have, respectively, the singularities $A_2$ (a cusp) 
and $A_6$ at $\bar 0$.
If $\b=0$ we get the normal form
$f(x,y)=y^2/2\pm x^4 +\a x^3y\pm \frac{3}{8}\a^2x^2y^2$ of \cite{Toru} 
whose parabolic and flecnodal curves have the respective singularities $A_3$ 
and $A_9$ because $\Delta=0$. 
\end{example*} 

\subsubsection{Comparing the transitions of Figures \ref{14perestroikas} and \ref{inflections}}\label{comparing-the-lists}

On one hand, the curve of inflections of a BIDE $F(x,y,p)=0$ is a particular case of the curve of (generalised) 
inflections of the characteristic fronts of a surface $\V\subset M$ in a contact $3$-manifold $M$. 
On the other hand, the flecnodal curve of a surface $z=f(x,y)$ leads to study the curve of
inflections of the asymptotic IDE $a^f=0$, which is a very special class of BIDE (of infinite codimension). 
So, we have three families of objects summarised in Table\,\ref{tb:three-similar problems}.
{\small 
\begin{table}[h!]
\centering
\begin{tabular}{|c|c|c|}\hline
Surface in $\RP^3$ ($\R^3$) & IDE $\,F(x,y,p)=0$ and $\V^F$  & $\V\subset M$, $\rho$, $\rho^\vee$ \\ \hline \hline 
parabolic curve              & discriminant                     & $\rho$-discriminant \\ \hline 
asymptotic curve             & solution of the IDE $F=0$        & characteristic front  \\ \hline 
flecnodal curve              & curve of inflections             & curve of inflections \\ \hline 
godron                       & folded singular point            & $\rho$-folded singular point \\ \hline 
hyperbonode                  & hyperbolic node                  & hyperbolic node \\ \hline 
ellipnode                    & (elliptic node)                  & (elliptic node) \\ \hline 
biflecnode                   & bi-inflection                     & bi-inflection \\ \hline 
\end{tabular}\ .
\caption{\small Asymptotic curves $|$ Solutions of IDEs $|$ Characteristic fronts.}
\label{tb:three-similar problems}
\end{table}}

\noindent
\textbf{$A_3$ vs ($a$ to $d$) transitions}. The $A_3$ transitions in Fig.\,\ref{14perestroikas} are analogues to the transitions 
$a$ to $d$ in Fig.\,\ref{inflections}: in both, the surface of the BIDE has a Morse 
singularity. But in cases $a$ to $d$,  
the curve of inflections has a Morse transition, while in the $A_3$ cases 
the flecnodal curve undergoes an $A_3$ transition, as a family of sections of a Whitney umbrella one of 
which is tangent to the umbrella at its pinch point (see Appendix B and Fig.\,\ref{umbrella}). 
\medskip

\noindent
\textbf{Infinite Degeneracy of the asymptotic IDE at $D_4$ transitions}. 
If a surface in $\RP^3$ has a flat umbilic point $q$ (a $D_4$-singularity), then  
the surface $\A_0\subset J^1(\R,\R)$ of its asymptotic IDE contains the whole 
fibre of $\pi : J^1(\R,\R)\ra J^0(\R,\R)$ over $q$ (see Fig.\,\ref{ricD4+}). 
Thus, the IDE defined by $\A_0$ is not binary and has infinite codimension in the space of IDEs. 
This explains the absence of $D_4$ transitions in Fig.\,\ref{inflections}

\begin{figure}[ht]
\centerline{\psfig{figure=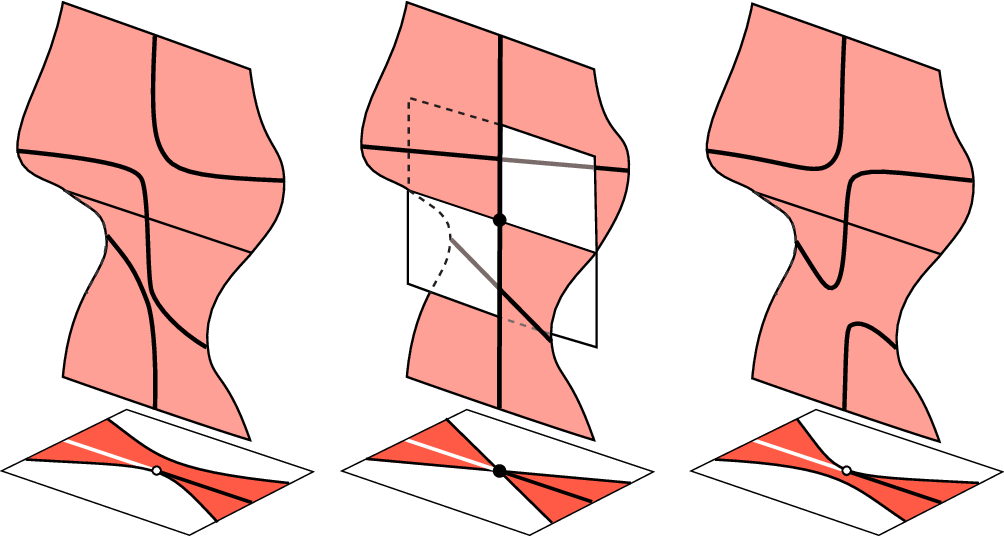,height=4cm}\ \ \ \ \psfig{figure=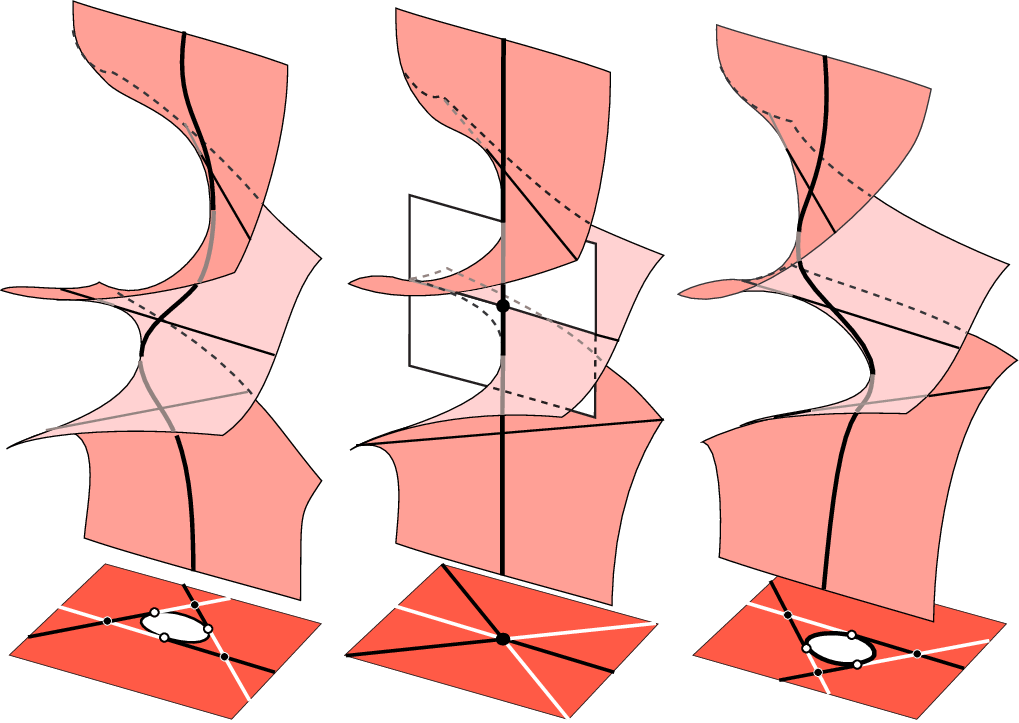,height=4cm}}
\caption{\small Evolution of the projection $\pi:\A^f\rightarrow \R^2$ for the $D_4^{+}$ 
and $D_4^{-}$ perestroikas.}
\label{ricD4+}
\end{figure}

\section{From Surfaces to IDEs: Preparatory Results}\label{preparatory-results}

\subsection{Asymptotic Double and Asymptotic IDE}\label{flecnodal-IDE}

\noindent
\textit{\textbf{Contact Elements}}.
Let $N$ be an $n$-dimensional smooth manifold. 
A {\em contact element} at a point of $N$ is a vectorial hyperplane (of co-dimension $1$) 
of the tangent space of $N$ at that point (called the {\em point of contact}). The set of 
all contact elements of $N$ is a ($2n-1$)-dimensional manifold (noted by $PT^*N$) endowed 
with a natural contact structure.
\smallskip

\noindent
\textit{\textbf{Asymptotic double}}.
Now, consider a generic smooth surface $S$ in $\RP^3$. The {\em asymptotic-double} 
$\A$ of $S$ is the (smooth) surface of $PT^*S$ consisting of the asymptotic directions 
along $S$\,: at each hyperbolic point, the asymptotic lines determine two contact 
elements, while at the parabolic points there is only one asymptotic contact element 
(with multiplicity two). See Fig.\ref{doble}. 
\begin{figure}[ht]
\centerline{\psfig{figure=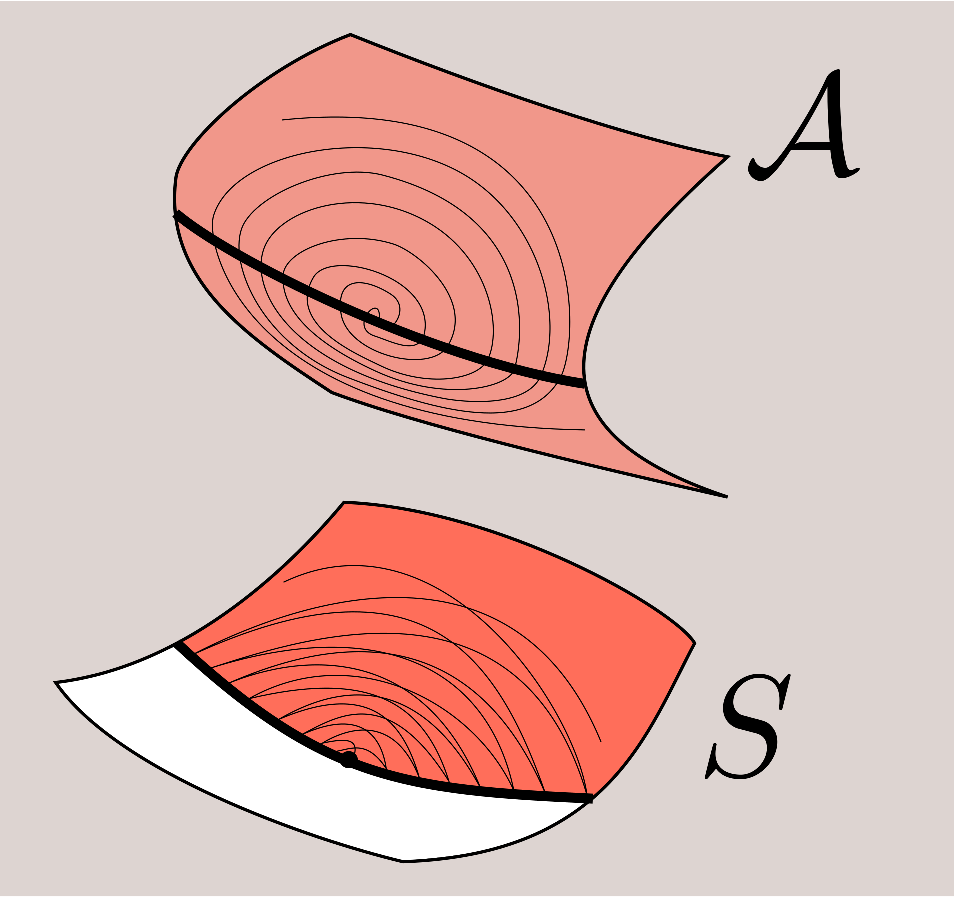,height=2.2cm}}
\caption{\small The asymptotic-double of a surface.}
\label{doble}
\end{figure}

So, under the natural projection $PT^*S\ra S$ (sending each contact element to its point
of contact) the asymptotic double $\A \subset PT^*S$ doubly covers the hyperbolic domain with 
a fold singularity over the parabolic curve. 

To study the flecnodal curve, we shall use the asymptotic double $\A$ of $S$. 

Consider a projection of $S$ to an affine plane $\R^2 \subset \RP^3$, from a 
point exterior to the projective plane that contains $\R^2$, 
$\pi : S\rightarrow \R^2\subset \RP^3$, and assume it is a local diffeomorphism 
at every point of $S$. 

On one hand, {\em any such projection sends (bijectively) the flecnodal curve of $S$ onto the 
curve of $\R^2$ formed by the inflections of the images (by $\pi$) of the 
asymptotic curves of $S$} (see Proposition~\ref{flecnodal-inflections} in \S\,\ref{subsect:flecnodal-inflections}).

On the other hand, {\em the derivative of $\pi$ sends the contact elements of $S$ 
to the contact elements of $\pi(S)\subset \R^2$ and it induces a contactomorphism 
$PT^*S \rightarrow PT^*(\pi(S))\subset PT^*\R^2$ sending $\A$ to a surface $\tilde\A\subset PT^*\R^2$ 
which we still call the asymptotic-double of $S$ and which doubly covers 
(under the natural projection $PT^*\R^2\rightarrow \R^2$) the image by $\pi$ in $\R^2$ 
of the hyperbolic domain.} We note it $\A^f$ when $S$ is given in Monge form $z=f(x,y)$.

Consider the surface $S$ as the graph of a function $z=f(x,y)$, and take the projection 
$(x,y,z)\rightarrow (x,y)$, along the $z$-axis. Then, the 
asymptotic-double $\A^f\subset PT^*\R^2$ is given 
by the contact elements of the $xy$-plane for which 
$$f_{xx}dx^2+2f_{xy}dxdy+f_{yy}dy^2=0.\eqno(*)$$

In order to make calculations, we take the space $J^1(\R,\R)$ 
(with coordinates $x,y,p$) as an `affine' chart of $PT^*\R^2$.
Namely, $J^1(\R,\R)$ parametrises all non vertical
contact elements of the $xy$-plane\,:
the contact element with slope $p_0\neq \infty$ at $(x_0,y_0)$ is represented by the 
point $(x_0,y_0,p_0)$ in $J^1(\R,\R)$. The asymptotic-double $\A^f$ 
is the surface in $J^1(\R,\R)$ given by the {\em asymptotic IDE}
$$a^f(x,y,p):=f_{xx}+2f_{xy}p+f_{yy}p^2=0,\eqno(a)$$
obtained from eq.~$(*)$ by taking $p=dy/dx$. 
{\em The asymptotic curves of $S$ are the images by $f$ of the solutions of the IDE $(a)$, whose
discriminant (in the $xy$-plane) corresponds to the parabolic curve of $S$}.




\subsection{Facts and Lemmas on IDEs $\,F(x,y,p)=0$}

Consider a smooth function $F:J^1(\R,\R)\rightarrow \R$, $(x,y,p)\mapsto F(x,y,p)$. 
\smallskip

\noindent
\textit{\textbf{Inflection Function}}. 
The {\em inflection function of $F$}, $I^F:J^1(\R,\R)\ra\R$, is defined by 
$$I^F(x,y,p):=F_x(x,y,p)+pF_y(x,y,p).$$ 
\textit{\textbf{Flec-Surface}}. The {\em flec-surface of $F$}, defined in \S\ref{sect:morse}, is the 
surface given by the equation $I^F(x,y,p)=0$ in the space $J^1(\R,\R)$.

\begin{lemma}\label{lemma 0}
The contour of the surface $\V^F$ by the projection $\pi^\vee$ {\rm (i.e. the  $\pi^\vee$-contour)} 
is the intersection of $\V^F$ with the flec-surface of the function $F$.
\end{lemma}

\begin{proof}
An easy computation shows 
(and it is well known, \cite{arnoldcs}) that the characteristic direction 
(\S\ref{characteristics}) is generated by the vector field 
$$(\dot{x},\dot{y},\dot{p})=(-F_p~,-pF_p~,F_x+pF_y).\eqno(*)$$ 
The direction field ($*$) is tangent to the $\pi^\vee$-fibre at a point 
of $\V^F$ iff this field is horizontal, i.e. iff $F_x+pF_y=0$. 
Thus the $\pi^\vee$-contour of $\V^F$ consists of the points 
of $\V^F$ at which  $I^F(x,y,p)=0$. 
\end{proof}

\begin{corollary*}
A point of the surface $\V^F$ is a characteristic point iff $d F\neq 0$, $F_p=0$ and $F_x+pF_y=0$
at this point.
\end{corollary*}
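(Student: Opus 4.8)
The plan is to reduce the statement to a short computation in the cotangent space, using that a \emph{characteristic point} is by definition a point of $\V_F$ where the tangent plane coincides with the contact plane $\Pi=\ker\a$, $\a=p\,dx-dy$. Since this notion presupposes that $\V_F$ is a smooth surface near the point, the hypothesis $dF\neq 0$ enters from the outset: exactly where $dF\neq 0$ is the level surface $\V_F=\{F=0\}$ smooth, with tangent plane $\ker dF$. So I would first fix attention on the smooth locus and then compare the two planes.

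The two hyperplanes are cut out by the covectors $dF=(F_x,F_y,F_p)$ and $\a=(p,-1,0)$ in the basis $dx,dy,dp$. Two hyperplanes of a $3$-dimensional space coincide if and only if their annihilating covectors are proportional; as $\a$ never vanishes (its $dy$-component is $-1$) and $dF\neq 0$ by hypothesis, coincidence is equivalent to $dF=\l\,\a$ for some $\l\neq 0$. The vanishing of the three $2\times 2$ minors of $\left(\begin{smallmatrix} F_x & F_y & F_p \\ p & -1 & 0\end{smallmatrix}\right)$ gives $F_p=0$, $pF_p=0$ and $F_x+pF_y=0$; the first two collapse to $F_p=0$, leaving exactly the two conditions $F_p=0$ and $F_x+pF_y=0$. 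This routine minor computation settles both implications on the smooth locus.

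Alternatively, and in keeping with Lemma\,0, the forward implication is immediate: by the Remark preceding Proposition\,\ref{tangent-fold} a characteristic point lies on both the $\pi$-contour $\{F_p=0\}$ (the criminant) and the $\pi^\vee$-contour, which by Lemma\,0 equals $\V_F\cap\{F_x+pF_y=0\}$; its lying on the smooth surface $\V_F$ forces $dF\neq 0$. The converse is then supplied by the minor computation above, or equivalently by noting that the characteristic field $(\dot x,\dot y,\dot p)=(-F_p,-pF_p,F_x+pF_y)$ from the proof of Lemma\,0 vanishes exactly under these two conditions, so that together with $dF\neq 0$ they force the tangent and contact planes to coincide. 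The only subtlety—and hence the sole (minor) obstacle—is the role of $dF\neq 0$: it is what makes $\V_F$ a genuine smooth surface with a well-defined tangent plane, so that proportionality of the annihilating covectors is equivalent to coincidence of the planes; everything else is the elementary linear algebra sketched above.
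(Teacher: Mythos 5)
Your proof is correct and is essentially the paper's own (implicit) argument: the corollary is stated right after Lemma~0 precisely because it follows from the characteristic field $(\dot x,\dot y,\dot p)=(-F_p,-pF_p,F_x+pF_y)$ exhibited there --- a characteristic point is a point where this field vanishes on the smooth surface $\V_F$ --- which is exactly your second route, and your covector-proportionality (minor) computation is just the direct verification of the same fact. Your emphasis on the role of $dF\neq 0$ (it makes $\V_F$ smooth with tangent plane $\ker dF$, so coincidence of planes is equivalent to $dF$ being a nonzero multiple of $\a=p\,dx-dy$) is also exactly how the paper uses that hypothesis.
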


\begin{lemma}\label{Lemma A}
The origin belongs to the flec-surface of $F$ iff $F_x(\bar 0)=0.$
\end{lemma}

\begin{proof}
$I^F(\bar 0)=(F_x+pF_y)(\bar 0)=F_x(\bar 0)$.
\end{proof}

\begin{lemma}\label{Lemma B}
The flec-surface of $F$ has vertical tangent plane at $\bar{0}$ 
iff $$F_{xp}(\bar 0)+F_y(\bar 0)=0.$$ 
\end{lemma}

\begin{proof}
The tangent plane to the flec-surface 
is vertical at a point iff $I^F_p=0$ 
at that point, i.e. iff $F_{xp}+pF_{yp}+F_y=0$ at that point. 
\end{proof}

\begin{lemma}\label{Lemma C}
Suppose that the origin is a Morse critical point of $F$. 
The flec-surface of $F$ has vertical tangent plane at the origin 
iff $$F_{xp}(\bar 0)=0.$$
\end{lemma}

\begin{proof}
The statement follows from Lemma\,\ref{Lemma B}. 
\end{proof}

\begin{proposition}\label{prop:vertical-flecsurface}
Let $Q\in\V^F$ a Morse critical point of $F$. 
The flec-surface of $F$ has vertical tangent plane at $Q$ iff the tangents to the 
$\pi$ and $\pi^\vee$-fibres are conjugate diameters of the cone $K\subset T_QJ^1(\R,\R)$ determined by $F$. 
\end{proposition} 

\begin{proof}
At $Q=\bar{0}\in J^1(\R,\R)$ the contact plane $\Pi$ coincides with the 
$xp$-plane and the tangents to the $\pi$ and $\pi^\vee$-fibres coincide with the $x$- and $p$-axes, 
$\ell_x$ and $\ell_p$. So, a simple calculation shows that the cross-ratio of the $\pi$- and $\pi^\vee$-tangents, $\ell_x$, $\ell_p$, 
with the lines $\ell_1$, $\ell_2$, on which the contact plane $\Pi$ intersects the cone $K$ (being real 
or complex lines), is equal to 
\begin{equation}\label{eq:cross-ratio-formula}
 \left(\ell_x,\ell_p,\ell_1,\ell_2\right)=\dfrac{-F_{xp}+\sqrt{F_{xp}^2-F_{xx}F_{pp}}}{-F_{xp}-\sqrt{F_{xp}^2-F_{xx}F_{pp}}}\,. 
\end{equation}

The tangents to the $\pi$ and $\pi^\vee$-fibres at $\bar{0}$ are conjugate diameters of $K$ iff the cross-ratio 
$\left(\ell_x,\ell_p,\ell_1,\ell_2\right)$ equals $-1$ (\S\ref{section:cone-fibrations} (harmonicity)). 
This holds, by eq.\,\eqref{eq:cross-ratio-formula}, iff $F_{xp}(\bar{0})=0$. 
But, by Lemma\,\ref{Lemma C}, $F_{xp}(\bar{0})=0$ iff the tangent plane to the flec-surface of $F$ at $\bar{0}$ 
is vertical. 
\end{proof}

\subsection{Lemmas for the Asymptotic IDE $\,a^f(x,y,p)=0$}

\begin{lemma}\label{Lemma D}
The function $a^f$ satisfies the equalities 
$$a^f_{xp}(\bar 0)+a^f_y(\bar 0)=
\frac{3}{2}a^f_{xp}(\bar 0)=3a^f_y(\bar 0)=3f_{xxy}(0,0).$$
\end{lemma}

\begin{proof}
Direct, short and easy calculation (by hand). 
\end{proof}

\begin{lemma}\label{Lemma E}
The flec-surface of $a^f$ has vertical 
tangent plane at $\bar{0}$ iff 
$$a^f_y\left(\bar 0\right)=0.$$
\end{lemma}

\begin{proof}
Lemma\,\ref{Lemma B} (applied to $F=a^f$) and Lemma\,\ref{Lemma D} imply Lemma\,\ref{Lemma E}. 
\end{proof}

\begin{lemma}\label{Lemma F}
If the function $a^f$ has a critical point then 
its flec-surface has vertical tangent plane at that point.
\end{lemma}

\begin{proof}
Since we are supposing that the critical point is the origin, we have 
$a^f_x(\bar 0)=a^f_y(\bar 0)=a^f_p(\bar 0)=0$. 
By Lemma\,\ref{Lemma A} (for $F=a^f$), the point belongs to the flec-surface. 
Now Lemma\,\ref{Lemma F} follows from Lemma\,\ref{Lemma E}. 
\end{proof}

\begin{lemma}\label{Lemma G}
If the flec-surface of $a^f$ has vertical 
tangent plane at a point of the criminant then this point 
is critical for the function $a^f$.
\end{lemma}

\begin{proof}
Supposing our point is $\bar{0}$, we have $a^f_x(\bar 0)=0$ (by Lemma\,\ref{Lemma A})  
and $a^f_p(\bar 0)=0$ (because $\bar{0}$ is in the criminant). 
By Lemma\,\ref{Lemma E}, $a^f_y(\bar 0)=0.$ 
\end{proof}

\subsection{Flecnodal Curve and Curves of Inflections}\label{subsect:flecnodal-inflections}

\begin{proposition}\label{flecnodal-inflections}
Given a smooth surface $S$ in Monge form $z=f(x,y)$, the 
projection $(x,y,z)\mapsto (x,y)$ (along the $z$-axis) 
sends the flecnodal curve of $S$ onto the curve of inflections 
of the asymptotic IDE $\,a^f(x,y,p)=0$\,$:$ 
$$f_{xx}+2f_{xy}p+f_{yy}p^2=0. \eqno(a)$$
\end{proposition}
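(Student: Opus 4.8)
The plan is to reduce everything to a single algebraic condition on the asymptotic direction $p$ and to check that this one condition simultaneously describes the flecnodal curve of $S$ and the curve of inflections of the solutions of $a^f=0$. Since the vertical projection $(x,y,z)\mapsto(x,y)$ restricts to a diffeomorphism of the graph $S$ onto the $xy$-plane, it will suffice to match the two curves pointwise, direction by direction.

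First I would compute the flecnodal condition directly from the contact of $S$ with its asymptotic tangent lines. Fix a point of $S$ and an asymptotic direction $(1,p)$ in the tangent plane, so that $p$ solves $a^f=f_{xx}+2f_{xy}p+f_{yy}p^2=0$; this is exactly the statement that the asymptotic line has $3$-point contact with $S$. Writing $D:=\D_x+p\,\D_y$ and substituting the parametrised tangent line into $z-f(x,y)=0$, the successive Taylor coefficients of the resulting function $g(t)$ read $g(0)=g'(0)=0$, then $g''(0)=-(D^2f)=-a^f=0$, and finally $g'''(0)=-(D^3f)$. Hence the point lies on the flecnodal curve (the asymptotic line exceeds $3$-point contact) if and only if
\[
D^3f=f_{xxx}+3f_{xxy}\,p+3f_{xyy}\,p^2+f_{yyy}\,p^3=0 .
\]

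Next I would identify the curve of inflections of the IDE with this same condition. A solution of $a^f=0$ is a plane curve $x\mapsto(x,y(x))$ with $y'=p$ and $a^f(x,y,y')\equiv0$; its inflections are the points where $y''=0$. Differentiating $a^f(x,y(x),y'(x))=0$ along the solution gives $a^f_x+p\,a^f_y+a^f_p\,y''=0$, so off the discriminant (where $a^f_p\neq0$) one has $y''=-(a^f_x+p\,a^f_y)/a^f_p=-I^{a^f}/a^f_p$. A short computation of $I^{a^f}=a^f_x+p\,a^f_y$ yields precisely $I^{a^f}=D^3f$, the same cubic in $p$ obtained above. Thus the inflection condition $y''=0$ coincides with the flecnodal condition $D^3f=0$, in agreement with the description of the $\pi^\vee$-contour $\{a^f=0,\ I^{a^f}=0\}$ furnished by Lemma 0 and the Inflection-Contour Theorem.

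Combining the two computations, a point of $S$ carrying an asymptotic direction $p$ is flecnodal if and only if the corresponding solution of $a^f=0$ has an inflection there; since $\pi$ identifies $S$ with the $xy$-plane, this yields the desired bijection, the two asymptotic roots $p$ being handled by the identical cubic $D^3f$. The main obstacle I expect is not the computation but the degenerate locus: along the parabolic curve the two asymptotic directions merge and $a^f_p$ vanishes, so the formula $y''=-I^{a^f}/a^f_p$ breaks down and godrons would need separate attention; the cleaner route is to phrase the inflection locus invariantly as $\{a^f=0,\ I^{a^f}=0\}$ via the Inflection-Contour Theorem, which is already valid on the contour. I would also fix a local affine chart in which the relevant asymptotic direction is non-vertical, so that the coordinates $(x,y,p)$ on $J^1(\R,\R)$ genuinely parametrise the contact elements involved.
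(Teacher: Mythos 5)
Your proposal is correct and takes essentially the same route as the paper's own proof: both reduce the flecnodal condition, via the $4$-point-contact computation along an asymptotic tangent line, to the vanishing of the cubic $f_{xxx}+3f_{xxy}p+3f_{xyy}p^2+f_{yyy}p^3$ together with $a^f=0$, and both conclude by matching this with the equations $\{a^f=0,\ I^{a^f}=0\}$ of the curve of inflections, the key identity being $I^{a^f}=a^f_x+p\,a^f_y=f_{xxx}+3f_{xxy}p+3f_{xyy}p^2+f_{yyy}p^3$. Your extra step $y''=-I^{a^f}/a^f_p$ along solutions, with the explicit fallback to the contour description $\{a^f=0,\ I^{a^f}=0\}$ on the discriminant, is a harmless elementary supplement to what the paper obtains directly from Lemma 0 and the Inflection-Contour Theorem.
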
 

Of course, the discriminant (in the $xy$-plane) of the IDE $(a)$ corresponds 
to the parabolic curve (in the $xyz$-space) of the surface $z=f(x,y)$.

\begin{proof}[Proof of Proposition \ref{flecnodal-inflections}] 
Consider a regularly parametrised curve on $S$
$$\g(t)=(\g_1(t), \g_2(t),f(\g_1(t), \g_2(t))),\ \ 
\g(0)=(0,0,0),\ \ \g'(0)\neq(0,0,0)\,.
$$
Its tangent line at $\g(0)$ is parametrised by $\ell(s)=\g(0)+s\g'(0)$. 
Consider the function $G(x,y,z)=f(x,y)-z$. 
The line $\ell$ has $4$-point contact with $S$ at the 
origin iff the function $g(s)=(G\circ\ell)(s)$ 
has a $0$ of multiplicity $4$ at $s=0$.
We already have $g(0)=g'(0)=0$ because $\ell$ is tangent to $S$ at the origin.
(To simplify, we write $\g_1'$ and $\g_2'$ instead of $\g_1'(0)$ and $\g_2'(0)$;
and since the function $f$ and its partial derivatives are evaluated at 
$(s\g_1',s\g_2')$, we omit to note this point.) 
The function $g$ is given by
$$g(s)=s(f_x\g_1'+f_y\g_2')-f.$$
We have therefore 
$$
\begin{array}{rcc}
g''(s) & = & s(f_{xxx}(\g_1')^3+3f_{xxy}(\g_1')^2\g_2'+ 3f_{xyy}\g_1'(\g_2')^2+f_{yyy}(\g_2')^3) + \\
 & & +f_{xx}(\g_1')^2+2f_{xy}\g_1'\g_2'+f_{yy}(\g_2')^2.
\end{array}$$
So $g''(0)=0$ iff the vector $(\g_1',\g_2')$ satisfies the 
equation 
$$(f_{xx}(\g_1')^2+2f_{xy}\g_1'\g_2'+f_{yy}(\g_2')^2)|_{s=0}=0.$$
$$g'''(s)=f_{xxx}(\g_1')^3+3f_{xxy}(\g_1')^2\g_2'+ 3f_{xyy}\g_1'(\g_2')^2+f_{yyy}(\g_2')^3
+s\cdot(\mathrm{something}).$$
Thus, $g'''(0)=0$ iff the vector $(\g_1',\g_2')$ satisfies the 
equation 
$$(f_{xxx}(\g_1')^3+3f_{xxy}(\g_1')^2\g_2'+ 
3f_{xyy}\g_1'(\g_2')^2+f_{yyy}(\g_2')^3)|_{s=0}=0.$$
But the curve of inflections of the IDE $a^f(x,y,p)=0$ is given by the equations 
$$f_{xx}+2f_{xy}p+f_{yy}p^2=0\, \ \ \mbox{ and } \ \ \,I^{a^f}=a_x^f+pa_y^f=0$$ 
and the last equation is written as \ 
$\,f_{xxx}+3f_{xxy}p+3f_{xyy}p^2+f_{yyy}p^3=0.$ 
\end{proof}

\begin{proposition}\label{prop:flacnodal-space-inflections}
The curve formed by the inflections of the asymptotic curves of a smooth surface 
coincides with the flecnodal curve of that surface. 
\end{proposition}

\begin{proof}
Let $\g$ be a regularly parametrised left (or right) asymptotic curve of $S$ such that $\g(0)=(0,0,0)$. 
Along our asymptotic curve $\g$ we have 
\begin{equation}\label{eq:asymptotic-diff-equation}
  f_{xx}(\g_1')^2++2f_{xy}\g_1'\g_2'+f_{yy}(\g_2')^2\equiv 0
\end{equation}

Derivating \eqref{eq:asymptotic-diff-equation} we get that the following equation holds along $\g$ 
\begin{equation}\label{eq:flecnode-inflection}
\begin{array}{rc}
 \left(f_{xxx}(\g_1')^3+3f_{xxy}(\g_1')^2\g_2'+ 3f_{xyy}\g_1'(\g_2')^2+f_{yyy}(\g_2')^3\right)&   \\
+ 2\left(f_{xx}\g_1'\g_1''+f_{xy}(\g_1'\g_2''+\g_1''\g_2')+f_{yy}\g_2'\g_2''\right)&= \, \, 0.
\end{array}  
\end{equation}
The origin, $\g(0)$, belongs to the left flecnodal curve iff the asymptotic tangent line $\ell(s)=\g(0)+s\g'(0)$ 
has at least $4$-point contact with $S$ at $\g(0)$. According to the proof of 
Proposition\,\ref{flecnodal-inflections}, this $4$-point contact is attained iff 
\begin{equation}\label{eq:asymptotic-direction-vector}
  f_{xx}(\g_1')^2++2f_{xy}\g_1'\g_2'+f_{yy}(\g_2')^2=0\quad \mbox{and} 
\end{equation}

$f_{xxx}(\g_1')^3+3f_{xxy}(\g_1')^2\g_2'+ 3f_{xyy}\g_1'(\g_2')^2+f_{yyy}(\g_2')^3=0$ at the origin. 
\medskip

Equation \eqref{eq:asymptotic-direction-vector} is satisfied because $\g$ is an asymptotic curve. 
Thus, the last equation holds at the origin iff  
the second term of the left-hand side of \eqref{eq:flecnode-inflection} vanishes at the origin. 
This last condition holds iff $\g''(0)$ is a multiple $\g'(0)$, that is, iff 
$\g(0)$ is an inflection of our asymptotic curve $\g$. 
\end{proof}

\section{Proofs of the Theorems on Evolving Surfaces}\label{sect:proofs-of-theorems-on-surfaces}

\subsubsection*{On the proofs of Theorem\,\ref{th:bigodron&flecgodron} and of No Cusp Theorem}\label{subsection:no-cusp-th} 
Items $a)$ and $b)$ of Theorem\,\ref{th:bigodron&flecgodron} coincide with items $a)$ and $b)$ of Theorem\,\ref{special-tangent} 
and of Theorem\,\ref{th:bifurcation-withPsmooth} (and were discussed in \cite{Uribegodron}).  
It remains to prove that, in the case of surfaces in $3$-space, item $c)$ of Theorem\,\ref{special-tangent} never occurs: 
\smallskip

\noindent
\textit{The flecnodal curve of a smooth surface in $\RP^3$ $($or in $\R^3$$)$ has never a cusp
at a godron at which the parabolic curve is smooth}. (No Cusp Theorem.)  









\begin{proof}[Proof of No Cusp Theorem]
Assume the surface has a godron $g$ at the point $z_0=f(x_0,y_0)$ with smooth parabolic curve.
Since its corresponding point $(x_0,y_0,p_0)\in\A^f\subset J^1(\R,\R)$ 
is a characteristic point of the surface $\A^f$, the point $(x_0,y_0)$ is a folded singular
point of the IDE $a^f=0$ with smooth discriminant.
Therefore $(x_0,y_0,p_0)$ is not a critical point of the function $a^f$. 


The curve of inflections of the IDE $a^f=0$ is the image under the map $\pi:(x,y,p)\mapsto (x,y)$ 
of the $\pi^\vee$-contour of $\A^f$. So, the curve of inflections has a cusp at $(x_0,y_0)$ 
iff the tangent line to the $\pi^\vee$-contour of $\A^f$ at  $(x_0,y_0,p_0)$ is vertical.
But since the $\pi^\vee$-contour of $\A^f$ is the intersection of $\A^f$ with the flec-surface of $a^f$,
this intersection has vertical tangent line at a point only if the flec-surface of 
$a^f$ has vertical tangent plane at that point. 
By Lemma\,\ref{Lemma G}, this is impossible for a non-critical point of the function $a^f$. 
\end{proof}

\subsubsection*{Proof of Theorem\,\ref{A_3-bifurcations} (On $A_3$-transitions)}\label{asymptote-IDE}
The conditions that guarantee the surface of equation $F=0$  has a ``\textit{Morse conic point in general position}''
(\S\,\ref{section:cone-fibrations}) are stated in Example\,\ref{quadratic-cone}. 
For $F=a^f$, these conditions are equivalent to the genericity conditions of Th.\,\ref{A_3-bifurcations}, 
stated in Note\,\ref{note:conic-points} in terms of the Monge form $z=f(x,y)$ at $q=(0,0;0)$. 

\begin{proposition}
If the function $a^f$ has a Morse critical point, then the quadratic cone $K$ determined by
the equation $a^f=0$ cannot have positions $e$ and $f$ with respect to the fibrations $\pi$ and $\pi^\vee$
{\rm (Fig.\ref{conos})}.
Moreover,  the tangent lines to the fibres of $\pi$ and $\pi^\vee$ are conjugate diameters of $K$. 
\end{proposition}

\begin{proof}
Lemma \ref{Lemma D} implies that $a^f_{xp}(\bar{0})=0$. By Example\,\ref{quadratic-cone} 
the intersection of the cone with the plane $y=0$ consists of the two lines given 
by the equation \,$a^f_{xx}(\bar{0})x^2+a^f_{pp}(\bar{0})p^2=0$.\,
Therefore the positions $e$ and $f$ of the cone are impossible (see Fig.\ref{conos}) 
and the cross-ratio of these lines with the $x$- and $p$-axes (the directions of the $\pi^\vee$- and $\pi$-fibres) 
is evidently $-1$. 
\end{proof}

So, for surfaces, only the positions a,b,c,d of the cone $K$ in Fig.\,\ref{conos} can take place. 
\medskip

\noindent
{\bf Proof of Theorem \ref{A_3-bifurcations}}. \ 
A generic family of smooth surfaces $\{z=f_\e(x,y)\}$ defines the family
of asymptotic IDEs $\{a^{f_\e}=0\}$.
By the stability of the Morse critical points, the functions $a^{f_\e}$ have a Morse critical point 
near the origin; but we can suppose that that critical point is the origin 
(by a translation and a rotation in the $xy$-plane, depending on $\e$).

%

Then, by Lemma\,\ref{Lemma F} and Lemma\,\ref{Lemma C}, the $2$-jet (in the variables $x,y,p$) of our family $\F^f$ of 
asymptotic IDE can be written in the form 
$$j^2\F^f (x,y,p;\e)=Ax^2+2Bxy+Cy^2+Dp^2+2Gyp-\E,$$
where \ $\,A=a+a_1\e+\cdots,\,$ \ $B=b+b_1\e+\cdots,\,$ \ $C=c+c_1\e+\cdots,\,$ \  
$E=e+e_1\e+\cdots,\,$ \ $G=g+g_1\e+\cdots,\,$ \ and \ $\,\E=\e+\mathrm{h.o.t.}(\e)$.

The first approximation in $\e$ is given by the family if IDEs
$$\Psi^f(x,y,p;\e)=a^f-\e=ax^2+2bxy+cy^2+dp^2+2gyp-\e=0.$$

The fact that $a^f(\bar 0)=0$ and that $\bar{0}$ is a critical point of $a^f(x,y,p)$ 
implies that at $(x,y)=(0,0)$ we have
$$f_{20}=f_{11}=f_{30}=f_{21}=0\,.$$
So the part of the Taylor expansion of $f$ determining the cone of $a^f=0$ is 
\begin{equation}\label{Taylor-expansion-A3}
f(x,y)=\frac{ax^4}{12}+\frac{bx^3y}{3}+\frac{cx^2y^2}{2}+\frac{gxy^2}{2}+\frac{dy^2}{2}+\cdots \,.
\end{equation}

With a scaling we make $d=1$. The condition $f_{40}(f_{22}-2f_{12}^2)-f_{31}^2\neq 0$ means 
$a(c-g^2)-b^2\neq 0$. 
We can make $g=0$ by a projective transformation (using the fact that $xy^2$ is a multiple of the 
quadratic part of $f$). Thus the above first approximation in $\e$ of the family of 
asymptotic IDEs becomes
\begin{equation}\label{reduced-cone-equation}
  a^f(x,y,p)-\e=ax^2+2bxy+cy^2+p^2-\e=0\,. 
\end{equation} 
Then the flec-surface is given by the equation
$$\,I^{a^f}/2=(ax+by)+(cy+bx)p=0.$$

Making the change of variables $\,Y=ax+by$, $\,X=bx+cy$ (which is possible only if $\Delta=ac-b^2\neq 0$) 
we get the equalities 
$$x=(bX-cY)/\Delta, \qquad \,y=(bY-aX)/\Delta \qquad \mbox{and} \qquad p=-Y/X\,.$$

Inserting them in \eqref{reduced-cone-equation} and puting $\a=-a/\Delta$, we get the equation 
\begin{equation}\label{simple-umbrella}
  Y^2=\e X^2+\a X^4-(c/\Delta)X^2Y^2+2(b/\Delta)X^3Y\,,
\end{equation}
of a Whitney umbrella (elliptic if $\a>0$ and hyperbolic if $\a<0$) whose isochronal section $\e=0$ 
is tangent to the umbrella at the pinch-point. 
%

For $g\neq 0$ we also get a Whitney umbrella, as expected, whose isochronal plane
section $\e=0$ is diffeomorphic to two tangent parabolas given by the equation
$(3g^2a^2+\Delta a)Y^4+4g^3a^2XY^2+\Delta^2X^2=0$, with $\Delta\neq ag^2$.
\smallskip

\noindent
\textbf{Stability}. The preceding calculations show that for each $\e$ the flecnodal curve of 
the IDE $a^f-\e = 0$ is the vertical projection of the intersection curve of the flec-surface 
of $a^f$ with the surface $\mathcal{A}_\e$ (of the IDE $a^f-\e=0$). 
The flec-surface of $a^f$ is thus foliated by curves labelled by $\e$ whose vertical projection 
is the flecnodal curve of the corresponding IDE $a^f-\e= 0$. 
This defines a map from the flec-surface of $a^f$ to the plane-time 3-space $(x,y,\e)$, whose image 
is the Whitney umbrella (without its handle), swiped by the flecnodal curves of this family. 
This map singularity is stable \cite{Whitney3} (see Appendix\,\ref{pinch-point}).
\smallskip

\noindent
{\bf Parabolic Curve}. The parabolic curve is provided by the Hessian curve, $(f_{20}f_{02}-f_{11}^2)(x,y)=0$.
At the transition moment it is given by the equation: 
\begin{equation}
(ax^2+2bxy+cy^2)(cx^2+gx+1)-(bx^2+2cxy+gy)^2=0\,.
\end{equation}
Then (for $\Delta\neq ag^2$) the parabolic curve is diffeomorphic, near the origin, to the two transverse
lines (may be imaginary) given by the equation
$$ax^2+2bxy+(c-g^2)y^2=0\,. \eqno{\square}$$

\subsubsection*{Proof of Theorem\,\ref{theorem:lips_bec-a-bec} (Lips, bec-\`a-bec, swallowtail)}\label{asymptote-IDE}
\noindent 
\textbf{Proof of Theorem\,\ref{theorem:lips_bec-a-bec}\,$(a)$}. 
For generic $1$-parameter families of surfaces in $\RP^3$, the corresponding pairs of surfaces 
$a^f=0$ and $I^f=0$ (in $J^1(\R,\R)$) may have one point of tangency for isolated parameter values. 
At such a point the differentials of $a^f$ and $I^f$ are proportional; and at $\bar{0}$ 
this proportionality is equivalent to the equality $3f_{21}^2-2f_{31}f_{11}=0$ because $f_{30}=f_{40}=0$. 

If we consider these surfaces as graphs of functions $p=p_a(x,y)$ and $p=p_I(x,y)$, 
the projection of their intersection to the $xy$-plane consists of the zeroes of the 
difference $p_a(x,y)-p_I(x,y)$, whose Taylor series is given by 
$Ax^2+2Bxy+Cy^2+\cdots$, where $A=f_{50}f_{11}$, $\,B=f_{41}f_{11}-2f_{31}f_{21}$ and 
$\,C=3f_{31}f_{12}-\frac{9}{2}f_{22}f_{21}+f_{32}f_{11}$. 

The statements of Theorem\,\ref{theorem:lips_bec-a-bec}\,$(a)$ follow from this fact.  
\medskip

\noindent 
\textbf{Proof of Theorem\,\ref{theorem:lips_bec-a-bec}\,$(b)$}. 
Since $f_{30}=f_{40}=0$, the pair of surfaces given by the equations $a^f(x,y,p)=0$ and $I^f(x,y,p)=0$ 
are smooth and transverse at $\bar{0}$ because $3f_{21}^2-2f_{31}f_{11}\neq 0$ 
(just compute the differentials). 
If we consider these surfaces as graphs of functions $p=p_a(x,y)$ and $p=p_I(x,y)$, 
then the projection of their intersection to the $xy$-plane (that is, the flecnodal curve) 
is given by the zeroes of the difference $p_a(x,y)-p_I(x,y)$. 
A direct computation shows that the Taylor series of this difference has the form
$$\left(\dfrac{f_{31}}{3f_{21}}-\frac{f_{21}}{2f_{11}}\right)y+2\b xy+\g y^2+\frac{f_{60}}{3f_{21}}x^3+\cdots,$$  
which implies the left (right) flecnodal curve has second order tangency with the left (resp. right) 
asymptotic line (for any value of $\b$ and $\g$). Then we have a double biflecnode, which can disappear or 
split into two simple biflecnodes. 

This proves Theorem\,\ref{theorem:lips_bec-a-bec}\,$(b)$.

\subsubsection*{On the proof of Theorem\,\ref{theorem:flat-umbilics} (On $D_4$-transitions)} 

We shall study the local transition at $\e=0$ of the flecnodal and parabolic curve of 
the surfaces $S_\e$ given by the following $1$-parameter family of functions\,: 
\begin{equation}\label{flat-umbilic-deformation}
f_\e(x,y)=\frac{\e}{2}(x^2\pm y^2)+\frac{1}{2}(x^2y\pm y^3).
\end{equation}

\noindent
(A generic family of functions would contain terms of degree $\geq 4$, which will only break 
the symmetries -  the terms of the form $\a x^3y$ and $\b y^4$ will ban the local presence of biflecnodes 
as we have seen at the end of \S\,\ref{section:bifurcation-theorems} 
 - see also Fig.\,\ref{ricD4+}.) 

The parabolic curves of this family of surfaces 
are the conics of equation
\[\pm (3y+2\e)^2-3x^2=\pm\e^2\,,\] 
(hyperbolas in the case $D_4^+$\, and ellipses in the case $D_4^-$) 
which undergo respectively a hyperbolic and an elliptic cone section transition.  

To describe the flecnodal curves, we use the equations $a^{f_\e}=0$, $I^{f_\e}=0$, that is 
\[(y+\e)+2xp\pm(\e+3y)p^2=0\,, \qquad 3p(1\pm p^2)=0\,.\]

\noindent
\textbf{Case $D_4^+$}. Insert the solutions of the equation $3p(1+p^2)=0$ ($p=0$ and $p=\pm i$) in 
the equation $(y+\e)+2xp+(\e+3y)p^2=0$. We respectively get the line of equation $y=-\e$, 
tangent to the parabolic curve at the godron $(0,-\e)$, and 
the complex conjugate lines $y=\pm ix$ whose real intersection point, the origin, is an ellipnode.  
\smallskip

\noindent
\textbf{Case $D_4^-$}. Insert the solutions of the equation $3p(1-p^2)=0$ 
in the equation $(y+\e)+2xp+(\e+3y)p^2=0$. We get the lines $y=-\e$ and $y=\pm x$,  
tangent to the parabolic curve at the respective godrons $(0,-\e)$ and $(\pm\e/2, -\e/2)$.  
These lines intersect at the three hyperbonodes $(0,0)$, $(\pm \e, -\e)$. 

Theorem\,\ref{theorem:flat-umbilics} follows from these facts because the flecnodal and parabolic curves, 
and the godrons, hyperbonodes and ellipnodes are stable. 
The introduction of terms of higher degree (to get a generic $1$-parameter family) would 
deform the flecnodal curve, which consists of three straight lines, into smooth curves, 
but would not change the configurations.




\begin{appendices}

\section{Examples of families of BIDEs}


\begin{example*}
Consider the family of implicit differential equations 
$$\{F_\e(x,y,p)=y-x^3-\e x-p^2=0\}.$$  
For a fixed value of $\e$, the criminant is provided by the 
system of equations $F_\e=0$ and $(\D F_\e/\D p)=0.$ So, the 
discriminant is given by the equation:
$$y=x^3+\e x.$$
By Lemma\,\ref{lemma 0} and Inflection-contour Theorem, the curve of inflections of the equation $F_\e=0$  
is the projection along the $p$-direction of the intersection of the 
surfaces given by the equations $F(x,y,p)=0$ and 
$I^F(x,y,p)=0$: 
$$y=x^3+\e x+(3x^2+\e)^2.$$
For each $x$, the coordinate 
$y$ of the curve of inflections is not less than the 
coordinate $y$ of the discriminant. For $\e>0$ these curves have 
no intersection point. For $\e<0$ they have two points of tangency, 
i.e. two folded singular points. 
For $\e=0$ these two folded singular points collapse into a folded singular 
point with multiplicity 2. 
\end{example*}

\begin{example*}[multiple folded singular point]
Consider the implicit differential equation 
$$F(x,y,p)=y-f(x)-p^2=0,$$
where $f:\R \rightarrow \R$ is a smooth function. 
Its discriminant curve and its curve of inflections are defined 
by the respective equations
$$y=f(x),\ \mbox{ and }\ y=f(x)+(f'(x))^2.$$
If $f$ is a polynomial of degree $k+1$ then 
$f(x)+(f'(x))^2$ has degree $2k$. Moreover, if $f$ has an 
$A_k$ singularity at the origin then $f(x)+(f'(x))^2$ has also an $A_k$ 
singularity at the origin. In particular, if $f(x)=x^{k+1}$, the 
multiplicity of intersection at the origin of
the criminant with the curve of inflections is $2k$, i.e. we have a 
folded singular point with multiplicity $k$. 
\end{example*}



\begin{example*}[perestroikas $\g v$ and $d$, by P.\,Pushkar]\label{petya}
Take the family of BIDEs
\[F_\e(x,y,p)=x^2+xp+p^2+\frac{1}{2}(y-1)^2-\e=0\, \qquad  \mbox{(with $\e\in\R$)}\,.\]
whose surfaces $\V^{F_\e}$ are ellipsoids for $\e>0$.

Combining the equations $F_\e(x,y,p)=0$ and $I^{F_\e}(x,y,p)=0$, 
one obtains the curve of inflections of the BIDE
$F_\e(x,y,p)=0$: 
$$4x^2+(\frac{1}{2}-\e)y^2-y^3-2x^2y+x^2y^2+\frac{1}{2}y^4=0.$$
The discriminant of the BIDE $F_\e(x,y,p)=0$ is the ellipse of equation
$$\frac{3}{4}x^2+\frac{1}{2}(y-1)^2=\e.$$
In Fig.\ref{petya1}, the curve of inflections and the discriminant curve are depicted 
for $\e=1, 1/2, 2/9\ \mathrm{and}\ 0$. At $\e=1/2$, the 
curve of inflections undergoes a $\g v$-perestroika; at $\e=0$, we have the 
transition $d$ of Fig.\ref{inflections}. 
\end{example*}

\begin{figure}[ht]
\centerline{\psfig{figure=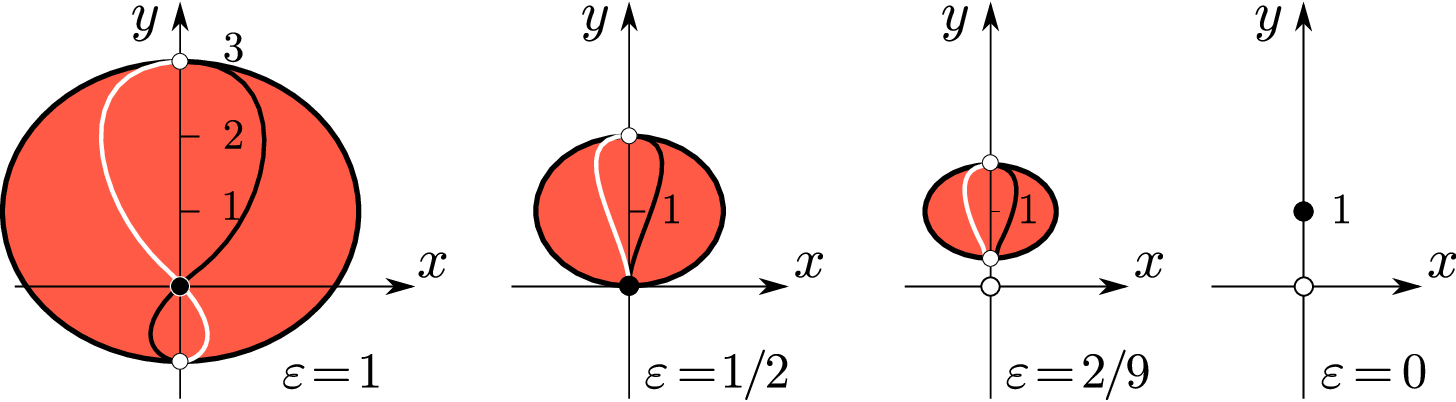,height=2.8cm}}
\caption{\small Discriminant curves and curve of inflections of P.\,Pushkar's Example.}
\label{petya1}
\end{figure}


\section{The Whitney Umbrella}\label{pinch-point}

The {\em standard Whitney umbrella} is the germ at the origin of the ``surface'',
in $3$-space, given by the equation $x^2=zy^2$ (Fig.\,\ref{Whitney}).
It intersects the planes $z=\mathrm{const}>0$ in pairs of lines $x^2=ay^2$ and the planes 
$y=\mathrm{const}$ in parabolas $z=bx^2$. This ``surface'' contains the $z$-axis and 
has the form of an eccentric umbrella, whose handle is the negative $z$-axis. 
A {\em Whitney umbrella} is a germ of surface diffeomorphic to the standard Whitney umbrella. 
For example, the equation $y^2=zx^2+\a x^4$ determines the so-called \textit{elliptic} ($\a>0$) or 
\textit{hyperbolic} ($\a<0$) \textit{Whitney umbrella} (Fig.\,\ref{Whitney}). 

\begin{figure}[ht]
\centerline{\psfig{figure=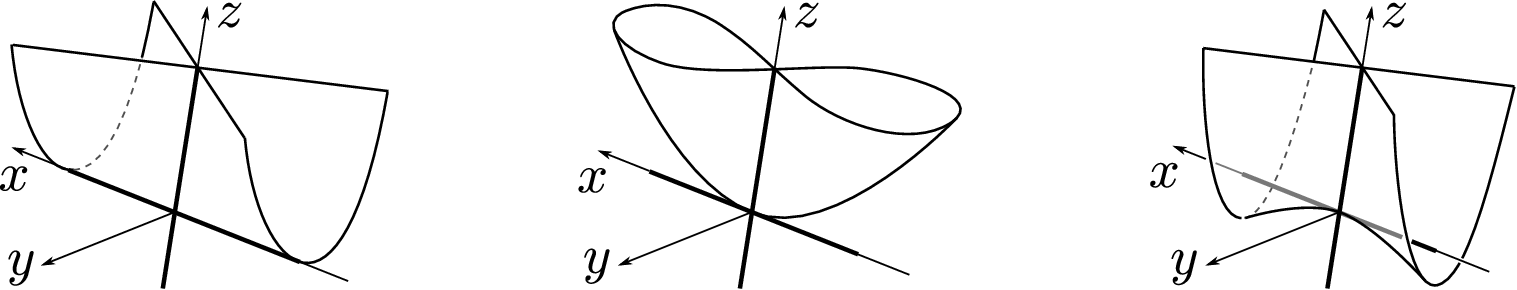,height=2.4cm}}
\caption{\small Standard/elliptic/hyperbolic Whitney umbrellas}
\label{Whitney}
\end{figure}

\noindent
\textit{\textbf{Whitney Singularity}}. 
The {\em Whitney singularity} of a map 
$\R^2 \rightarrow \R^3$ is the germ at zero of the map 
which, in suitable coordinates, is given by the formula $(u,v)\mapsto (uv,v,u^2)$. 
Its image, called \textit{cross-cap}, is the standard Whitney umbrella
without its handle. {\em This singularity is stable} (see  Whitney \cite{Whitney3}). 
The image of the map $(u,v)\mapsto(u,uv,v^2-\a u^2)$ is an elliptic ($\a>0$) or 
hyperbolic ($\a<0$) cross-cap. 

\begin{remark*}
A generic transverse section of the Whitney umbrella, through its pinch-point, 
is a curve having a semi-cubic cusp (Fig.\,\ref{umbrellasections}).
\end{remark*}

\noindent
\textit{\textbf{Single Tangent Line}}. \ 
At the pinch-point of a Whitney umbrella, there is a special tangent line:  
at each point of the self-intersection curve (the curve of double points) the 
surface has two transverse tangent planes (one for each local branch). Going along 
the self-intersection curve towards the pinch point, these planes tend to a single plane, 
and coincide with that plane at the pinch-point. In this plane there are two distinguished 
lines, one of them is the tangent line to the self-intersection curve and the other,  
called the \textit{single tangent line}, is the line tangent to the surface (the $y$-axis 
in Fig.~\ref{Whitney}). 
\medskip

\noindent
\textit{\textbf{Tangent Planes to the Umbrella}}.
If a map $\R^2\to\R^3$ has the Whitney singularity at a point, then the image of its differential 
(at that point) is the single tangent line to the umbrella image. 
So, a plane is said to be \textit{tangent to the umbrella at its pinch point} iff 
that plane contains the single tangent line. 
\end{appendices}

{\small

}


\begin{thebibliography}{99}

\bibitem {Arnoldcancon} {\bf Arnol'd V.I.}, {\em Critical points of 
smooth functions} in: Proceedings 
of the International Congress of Mathematicians (Vancouver, 1974) vol. 1, 
1975, 19-39. 

\bibitem {Arnoldwfeeml} {\bf Arnol'd V.I.}, {\em Wave front evolution and 
equivariant Morse lemma}, Commun. Pure Appl. Math. 29 (1976), 557-582. 


\bibitem {arnoldcs} {\bf Arnold V.I.}, {\em Geometrical Methods in the Theory of Ordinary
Differential Equations}, Spriger-Verlag, New York Heidelberg Berlin, 
1983. (Russian version: Nauka, 1978.) 

\bibitem {Arnoldspr} {\bf Arnold V.I.}, {\em Indices of singular points 
of $1$-forms on manifolds with boundary, convolution of invariants of 
groups generated by reflections, and singular projections of smooth 
hypersurfaces}, Russian Math. Surveys {\bf 34}:2 (1979) p.1-42. 

\bibitem {avg} {\bf Arnol'd V.I., Varchenko A.N., Gussein-Zade S.M.},
  {\em Singularities of Differentiable Maps}, Vol. 1, Birkh{\"a}user (1986). 
  
\bibitem{Arnold-Suf-Hyperbolic-eqs} \textbf{Arnold V.I.}, \textit{Surfaces Defined by Hyperbolic Equations}, 
Matem. Zametki \textbf{44}:1 (1988) p.3-18. English tranls:  Math. Notes Acad. of Sci. of the USSR \textbf{44}:1 (1988) p.489-497.





\bibitem {BT95} {\bf Bruce J.W., Tari F.}, {\em On binary differential equations}, 
Nonlinearity {\bf 8} (1995) p.255-271. 

\bibitem {BT97} {\bf Bruce J.W., Tari F.}, 
{\em Generic $1$-parameter families of binary differential equations of Morse type}, 
Discrete Contin. Dyn. Syst. {\bf 3} (1997) p.79-90. 

\bibitem {BT} {\bf Bruce J.W., Tari F.}, {\em Duality and 
implicit differential equations}, Nonlinearity {\bf 13} (2000) p.791-811. 

\bibitem {BFT2000} {\bf Bruce J.W., Fletcher G.J., Tari F.}, {\em Bifurcations of 
implicit differential equations}, Proc. R. Soc. Edinbourgh {\bf 130}A (2000) p.485-506. 

\bibitem {Cibrario} {\bf Cibrario M.}, {\em Sulla reduzione a forma delle equationi lineari 
alle derivate parziale di secondo ordine di tipo misto}, Accad. Sci. Lett., Inst. Lomb. Rend. {\bf 65} (1932) p.889-906. 

\bibitem {Dara} {\bf Dara L.}, {\em Singularit\'es g\'en\'eriques des \'equations diff\'erentielles multiformes}, 
Bol. Soc. Bras. Mat. {\bf 6} (1975) p.95-128. 

\bibitem {Davidov} {\bf Davydov A.A.}, {\em Normal form of a differential 
equation, not solvable with for the derivative, in a neighbourhood of a 
singular point}, Funct. Anal. Appl. {\bf 19}:2 (1985) 81-89. 

\bibitem {Davidov-Rosales} {\bf Davydov A.A.}, \textbf{Rosales-Gonzalez E.}, {\em Smooth normal forms of 
folded elementary singular points}, Jour. Dyn. Control Syst. {\bf 1} (1995) 463-482. 

\bibitem {Davidov-Ishi-Izu} {\bf Davydov A.A.}, \textbf{Ishikawa G.}, \textbf{Izumiya S.}, \textbf{Sun W-S.}, 
{\em Generic singularities of implicit systems of first order differential equations on the plane}, 
Japanese J. Math. {\bf 3} (2008) 93-119.


\bibitem {Docarmo} {\bf Do Carmo M.}, {\em Riemannian Geometry}, Birkhauser (1994) 4th Ed.

\bibitem {goryunov83} {\bf Goryunov V.V.}, {\em Singularities of projections of 
complete intersections}, J. Sov. Math. {\bf 27} (1984) 2785-2811.   
  
\bibitem {Toru2} \textbf{Kabata Y.}, \textbf{Deolindo Silva J.}, \textbf{Ohmoto T.},
 \textit{Binary differential equations at parabolic and umbilical pointsfor 2-parameter families of surfaces},
 Topology and its Applications \textbf{234} (2018) 457-473.
.  

\bibitem {Kazarian-Uribe} \textbf{Kazarian M.}, \textbf{Uribe-Vargas R.},
  \textit{Characteristic Points, Fundamental Cubic Form and Euler Characteristic 
  of Projective Surfaces}. Mosc. Math. J. \textbf{20}:3 (2020) 511-530. 

\bibitem {k-Thom} {\bf Kergosien Y.L., Thom R.}, {\em Sur les points 
paraboliques des surfaces}, C.~R.~Acad. Sci. Paris S\'er. A-B, {\bf 299} (1980) 705-710
  
\bibitem {Kortewegpp} {\bf Korteweg D.J.}, {\em Sur les points de plissement}, 
Arch. N\'eerl. {\bf 24} (1891) 57-98. 

\bibitem {Korteweggtp} {\bf Korteweg D.J.}, {\em La th\'eorie g\'en\'erale des plis}, 
Arch. N\'eerl. {\bf 24} (1891) 295-368. 

\bibitem {Landis} {\bf Landis E.E.}, {\em Tangential singularities},
Funct. Anal. Appl. {\bf 15}:2 (1981), 103--114. 

\bibitem {Levelt} {\bf Levelt Sengers J.}, {\em How fluids unmix: 
\small Discoveries by the School of Van der Waals and Kamerlingh Onnes}, KNAW, 
Amsterdam 2002. 


\bibitem {Dima} {\bf Panov D.A.}, {\em Special Points of Surfaces in the 
Three-Dimensional Projective Space}, 
Funct. Anal. Appl. {\bf 34}:4 (2000) 276--287. 

\bibitem {Ovsienko-Tabachnikov} \textbf{Ovsienko V.}, \textbf{Tabachnikov S.}, 
  \textit{Hyperbolic Carathéodory Conjecture}. Proc. of Steklov Inst. of Mathematics,
  \textbf{258} (2007) 178-193.

\bibitem {Platonova} {\bf Platonova O.A.}, {\em Singularities of the mutual position 
of a surface and a line}, Russ. Math. Surv. {\bf 36}:1 (1981) 248--249. Zbl.458.14014.


\bibitem {Salmon} {\bf Salmon G.}, {\em A treatise in analytic geometry of three dimensions},
Chelsea Pub. 1927. 

\bibitem {Toru} \textbf{Sano H.}, \textbf{Kabata Y.}, \textbf{Deolindo Silva J.}, \textbf{Ohmoto T.},
  \textit{Classification of Jets of Surfaces in Projective $3$-Space Via Central Projection},
  Bull. Math. Braz. Soc. (2017), DOI 10.1007/s00574-017-0036-x. 

\bibitem {Uribetesis} {\bf Uribe-Vargas R.}, {\em Symplectic and Contact Singularities in 
Differential Geometry of curves and surfaces}, 
PhD. Thesis, Universit{\'e} Paris 7 (2001).

\bibitem {Uribegodron} {\bf Uribe-Vargas R.}, {\em A Projective Invariant for
Swallowtails and Godrons, and Global Theorems on the Flecnodal Curve}, 
Mosc. Math. J. {\bf 6}:4 (2006) 731-772.   

\bibitem {Uribeinvariant} \textbf{Uribe-Vargas R.},
{\em On Projective Umbilics: a Geometric Invariant and an Index}.
Journal of Singularities \textbf{17}, Worldwide Center of Mathematics, LLC (2018) 81-90. DOI 10.5427/jsing.2018.17e


\bibitem {Whitney3} {\bf Whitney H.}, {\em On singularities of mappings 
of Euclidean spaces I. Mappings of the plane into the plane}, 
Ann. Math., II. Ser. 62 (1955), 374--410.




\end{thebibliography}
\end{document}